\def\@cite#1#2{{\m@th\upshape\bfseries%
[{#1\if@tempswa{\m@th\upshape\mdseries, #2}\fi}]}}
\theoremstyle{plain}
\newtheorem{thm}{Theorem}[section]
\newtheorem{cor}[thm]{Corollary}
\newtheorem{lem}[thm]{Lemma}
\theoremstyle{definition}
\newtheorem{rem}[thm]{Remark}
\newtheorem{defn}[thm]{Definition}
\newtheorem{eg}[thm]{Example}
\newcommand{\bA}{{\mathbb{A}}}
\newcommand{\bB}{{\mathbb{B}}}
\newcommand{\bC}{{\mathbb{C}}}
\newcommand{\bD}{{\mathbb{D}}}
  \newcommand{\B}{{\mathcal{B}}}
\renewcommand{\H}{{\mathcal{H}}}
  \newcommand{\M}{{\mathcal{M}}}
  \newcommand{\U}{{\mathcal{U}}}
\newcommand{\ep}{\varepsilon}
\renewcommand{\phi}{\varphi}
\newcommand{\upchi}{{\raise.35ex\hbox{\ensuremath{\chi}}}}
\newcommand{\fA}{{\mathfrak{A}}}
\newcommand{\fB}{{\mathfrak{B}}}
\newcommand{\fC}{{\mathfrak{C}}}
\newcommand{\fD}{{\mathfrak{D}}}
\newcommand{\fJ}{{\mathfrak{J}}}
\newcommand{\fL}{{\mathfrak{L}}}
\newcommand{\fM}{{\mathfrak{M}}}
\newcommand{\fR}{{\mathfrak{R}}}
\newcommand{\qand}{\quad\text{and}\quad}
\newcommand{\qif}{\quad\text{if}\quad}
\newcommand{\qfor}{\quad\text{for}\quad}
\newcommand{\qforal}{\quad\text{for all}\quad}
\newcommand{\AND}{\text{ and }}
\newcommand{\FORAL}{\text{ for all }}
\newcommand{\Alg}{\operatorname{Alg}}
\newcommand{\dist}{\operatorname{dist}}
\newcommand{\Lat}{\operatorname{Lat}}
\newcommand{\CycLat}{\operatorname{CycLat}}
\newcommand{\ran}{\operatorname{Ran}}
\newcommand{\spn}{\operatorname{span}}
\newcommand{\bsl}{\setminus}
\newcommand{\ip}[1]{\langle #1 \rangle}
\newcommand{\ol}{\overline}
\newcommand{\one}{{\mathbf{1}}}
\newcommand{\wot}{\textsc{wot}}
\newcommand{\Fd}{\mathbb{F}_d^+}
\newcommand{\Fock}{{\ell^2(\Fd)}}
\newcommand{\Hinf}{H^\infty }
\begin{document}

\title[NP Interpolation and Factorization]%
{Nevanlinna-Pick Interpolation and\\ Factorization of Linear Functionals}

\author[K.R. Davidson]{Kenneth R. Davidson}
\address{Pure Math.\ Dept.\\U. Waterloo\\Waterloo, ON\;
N2L--3G1\\CANADA}
\email{krdavids@uwaterloo.ca}

\author[R. Hamilton]{Ryan Hamilton}
\address{Pure Math.\ Dept.\\U. Waterloo\\Waterloo, ON\;
N2L--3G1\\CANADA}
\email{rhamilto@math.uwaterloo.ca}

\begin{abstract}
If $\fA$ is a unital weak-$*$ closed algebra of multiplication operators on a
reproducing kernel Hilbert space which has the property $\bA_1(1)$, then the
cyclic invariant subspaces index a Nevanlinna-Pick family of kernels.  This yields an
NP interpolation theorem for a wide class of algebras.  
In particular, it applies to many function spaces over the unit disk including
Bergman space.
We also show that the multiplier algebra of a complete NP space
has $\bA_1(1)$, and thus this result applies to all of its subalgebras.
A matrix version of this result is also established.  It applies, in particular,
to all unital weak-$*$ closed subalgebras of $H^\infty$ acting on Hardy space
or on Bergman space.
\end{abstract}

\subjclass[2010]{Primary 47A57; Secondary 30E05, 46E22}
\keywords{Nevanlinna-Pick interpolation, reproducing kernel}

\thanks{\noindent First author was partially supported by an NSERC grant.}
\thanks{\noindent Second author was partially supported by an NSERC fellowship.}

\date{}
\maketitle

\section{Introduction}\label{S:intro}

The classical interpolation theorem for analytic functions is due to Pick in 1916.
Suppose $z_1, \dots, z_n$ are distinct points in the complex open disk $\bD$ 
and  $w_1, \dots, w_n$ are complex numbers. 
The Nevan\-linna-Pick (NP) interpolation theorem asserts that the positivity of the matrix
\begin{align*}
 \left [ \frac{1-w_i \ol{w_j}}{1-z_i \ol{z_j}} \right ]
\end{align*}
is a necessary and sufficient condition to ensure the existence of an analytic 
function $f$ on $\bD$ satisfying $f(z_i) = w_i$ for $i=1, \dots ,n$ and 
$\|f\|_\infty :=\sup \{ | f(z) |: z \in \bD \} \leq 1$.
Nevanlinna gave a new proof and provided a parameterization of all solutions
a few years later.

The seminal work of Sarason \cite{Sar67} reformulated the Nevan\-linna-Pick theorem 
in operator theoretic language.  
Let $\fJ_E$ be the ideal of functions in $H^\infty (\bD)$ that vanish on 
$E= \{z_1,\dots,z_n\}$, and let $M(E) = H^2 \ominus \ol{\fJ_E H^2}$.
Sarason showed that the NP theorem is equivalent to the representation
\begin{align*}
 f + \fJ_E \mapsto P_{M(E)} M_f P_{M(E)}
\end{align*}
being isometric, where $M_f$ denotes the multiplication operator 
by $f$ on Hardy space $H^2(\bD)$.  
He established this using a prototypical version of the commutant lifting theorem.
The Szeg\"{o} kernel $k^S_z(w) = (1-\ol{z}w)^{-1}$ is the 
reproducing kernel for $H^2(\bD)$, and one easily sees that 
$M(E) = \spn\{k^S_{z_1}, \dots, k^S_{z_n}  \}$.   
The operator $P_{M(E)} M_f^* P_{M(E)} = M_f^*|_{M(E)}$ \vspace{.2ex}
is diagonal with respect 
to the (non-orthogonal) basis $k_{\lambda_1}^S, \dots, k_{\lambda_n}^S$ 
since $M_f^* k^S_{z_i} = \ol{f(z_i)} k^S_{z_i}$.   
We can summarize the classic NP theorem as the distance formula
\begin{align*}
 \dist(f, \fJ_E) = \| M_f^*|_{M(E)} \| .
\end{align*}

In \cite{Abram79}, Abrahamse proved a Nevan\-linna-Pick type theorem 
on a multiply connected domain $A$.  
Here, a Pick matrix associated to a single kernel was not sufficient to guarantee the existence of a solution.  
Instead, an entire family of kernels indexed by copies of the complex torus was required.
These spaces arose as subspaces of $L^2(\partial A)$ which are rationally invariant.
In the case of the annulus, these subspaces were classified by Sarason \cite{Sar65}.
Analogous invariant subspaces exist for all nice finitely connected domains.
Abrahamse establishes a factorization theorem which shows that certain
linear functionals can be represented as rank one functionals on one of these subspaces.
This was the first appearance of a Nevan\-linna-Pick family of kernels 
in the literature, and motivated the search for other Nevan\-linna-Pick families 
associated to different algebras of functions. 

There is  considerable literature concerned with interpolation in the multiplier algebra 
$\fM(H)$ of a reproducing kernel Hilbert space $H$ over a set $X$.
Again, given $E=\{ \lambda_1,\dots,\lambda_n\}$ in $X$ and scalars $w_1,\dots,w_n$,
we are interested in finding a function $f\in \fM(H)$ such that $f(\lambda_i) = w_i$ for
$1 \le i \le n$ and $\| M_f \| \le 1$.
We again define $\fJ_E = \{ f \in \fM(H) : f|_E=0\}$ and 
$M(E) = \spn\{k_{\lambda_i} : \lambda_i \in E \}$.
Such a Hilbert space is called a Nevan\-linna-Pick kernel if the analogous 
distance formula holds:
\begin{align*}
 \dist(M_f, \fJ_E) = \| M_f^*|_{M(E)} \| .
\end{align*}
The kernel functions $k_{\lambda_i}$ form a basis for $M(E)$ consisting of eigenvectors
for $M_f^*$, and so the right hand side is at most $1$ if and only if the following matrix is positive
\[
 \big[
  (1- w_i\ol{w_j}) \ip{k_{\lambda_j},k_{\lambda_i}}
 \big] .
\]

One can also consider matrix interpolation, and the classical theorem works
just as well for matrix algebras over $H^\infty$.
A kernel for which the classical formula holds for all matrix valued functions
is called a complete NP kernel.
Results of McCullough\cite{McCull92,McCull94} and Quiggin \cite{Quig93,Quig94},
building on (unpublished) work by Agler, provide a classification of complete NP kernels.  
Davidson and Pitts \cite{DP98b} show that the Drury-Arveson space 
or symmetric Fock space on the unit ball $\bB_d$ of $\bC^d$ is a complete NP kernel.
Agler and McCarthy \cite{AMc00} showed that every complete NP kernel is equivalent
to the restriction of the Drury-Arveson space to some subset of the ball.
There is no known characterization of ordinary NP kernels.

In \cite{DPRS09}, a Nevan\-linna-Pick problem on the Hardy space
was studied for the subalgebra 
\[ H^\infty_1 = \{ f \in H^\infty(\bD) : f'(0) = 0\} .\]
Beurling's theorem for the shift was used to characterize the invariant 
subspace lattice for this algebra. 
The main result in \cite{DPRS09} furnishes a necessary and sufficient condition 
for the existence of an interpolant in $H^\infty_1$.  
This requires the positivity of an entire family of Pick matrices 
\begin{align*}
 \big[ (1-w_i \ol{w_j}) \ip{P_Lk^S_{z_j},k^S_{z_i}} \big]
\end{align*}
for (a family of) invariant subspaces $L$ of $H^\infty_1$.  
Again this is equivalent to a distance formula for the ideal
$\fJ_E = \{f \in \H^\infty_1 : f|_E=0\}$:
\begin{align*} 
 \dist(M_f, \fJ_E) =  \sup_{L \in \Lat(H^\infty_1)}
 \| P_{L \ominus \fJ_E L} M_f^*|_{L \ominus \fJ_E L}\| .
\end{align*}

In this paper, we are interested in general interpolation problems of this type.  
Consider a reproducing kernel Hilbert space $H$ with kernel $k$ on some set $X$,
and a unital weak-$*$-closed subalgebra $\fA$ of multipliers on $H$.
Let  $E =\{  \lambda_1,\dots,\lambda_n \}$ be a finite subset of points in $X$ 
which are separated by $\fA$,
and let $w_1,\dots,w_n$ be complex numbers.  
We seek a necessary and sufficient condition to ensure the existence of a contractive multiplier in $\fA$ which interpolates the given data.
Set $\fJ_E = \{ f \in \fA : f|_E = 0 \}$. 
It is elementary (see Lemma ~\ref{L:dist1}) to verify that 
\begin{align*} 
 \dist(M_f, \fJ_E) \ge  \sup_{L \in \Lat\fA}
 \| P_{L \ominus \fJ_E L} M_f^*|_{L \ominus \fJ_E L}\| .
\end{align*}
Our primary goal here is to find a sufficient condition to ensure that equality holds.  
When such a formula is satisfied, the algebra $\fA$ is said to have an NP family 
of kernels. In this case, there is a corresponding family of Pick matrices for which
simultaneous positivity of its members is a necessary and sufficient condition for interpolation.

Our main result, Theorem~\ref{T:dist_ideal}, states that if $H$ is an reproducing kernel
Hilbert space, and $\fA$ is any unital weak-$*$-closed algebra of multipliers that has 
the strong predual factorization property called property $\bA_1(1)$, 
then $\fA$ admits a Nevan\-linna-Pick family.
A weak-$*$ closed subspace of operators $\fA$ has property $\bA_1(1)$ if every weak-$*$ continuous functional $\phi$
on $\fA$ with $\|\phi\|<1$ can be represented as a vector functional
$\phi(A) = \ip{Ax,y}$ for all $A \in \fA$ with $\|x\|\,\|y\| < 1$.
This property was introduced in \cite{HN82, ABFP}.

While the property $\bA_1(1)$ is very strong, many relevant 
examples of multiplier algebras have it.  
In particular, if the multiplier algebra has $\bA_1(1)$, then the results apply to
all unital weak-$*$ closed subalgebras.
A result of Arias and Popescu \cite{AP95} shows that quotients of the non-commutaitve
Toeplitz algebra have property  $\bA_1(1)$.
We use this to show that the multiplier algebras of all complete NP spaces 
have $\bA_1(1)$.
Thus these results apply in many well known contexts including all subalgebras of the 
multipliers on Drury-Arveson space.
Theorem~\ref{T:dist_ideal} provides an NP theorem for any weak-$*$-closed 
subalgebra of $H^\infty$ acting on Hardy space.
This provides a generalization of the results in \cite{DPRS09} and \cite{Rag09a}.  
In \cite{Rag09b}, Raghupathi shows that Abrahamse's interpolation theorem
is equivalent to a constrained interpolation problem on certain weak-$*$-closed 
subalgebras of $H^\infty$. 
Consequently our distance formula implies Abrahamse's result as well.

Additionally, the above distance formula is still achieved by restricting to the \emph{cyclic} invariant
subspaces of $\fA$.  In certain cases of interest, it suffices to use cyclic vectors
which do not vanish on $E$.  In the case of Hardy space and Drury-Arveson space,
this is accomplished by showing that cyclic subspaces generated by outer functions suffice.
This yields a simplification in the statement of the theorems.
 
The distance formula also formulates the classic Nevan\-linna-Pick 
problem in terms of the Bergman kernel, whose multiplier algebra has a property 
much stronger than $\bA_1(1)$.
Indeed, Bergman space satisfies even a complete distance formula; see Section~\ref{S:matrix}.  
While only of theoretical interest, since interpolation problems for $H^\infty$ 
are better carried out on Hardy space, this result is surprising since Bergman space
is not an NP kernel, and this failure persists even for $2$-point interpolation.

In Section~\ref{S:DA}, algebras of multipliers on complete Nevan\-linna-Pick spaces will be studied.  
The multipliers of any such space are complete quotients of the noncommutative analytic 
Toeplitz algebra $\fL_d$ \cite{DP98b}, which has the property $\bA_1(1)$ \cite{DP98a} and 
even the much stronger property $X_{0,1}$ \cite{Berco98}.
In the proof of Theorem~\ref{T:quotients have A1}, this is used to show that all 
complete NP kernel multiplier algebras inherit $\bA_1(1)$.
Consequently, \emph{any} weak-$*$closed algebra of multipliers on a complete NP space admits an NP family of kernels.

Finally, in Section~\ref{S:matrix}, matrix-valued Nevan\-linna-Pick problems are studied.  
In order to retrieve a suitable NP theorem for matrices of arbitrary size, something much 
stronger than $\bA_1(1)$ is required.  
However, by working with a suitable ampliation of the algebra, it is possible to retrieve 
an NP-type theorem with milder assumptions.  
Both of these results are summarized in Theorem~\ref{T:matrixNP}.  
{}From this, a complete NP family of kernels is presented for the Bergman space 
as well as the classic matrix-valued NP theorem.
A recent result of Ball, Bolotnikov and Ter Horst \cite{BBtH} regarding 
matrix-valued interpolation on the algebra $H^\infty_1$ is generalized to 
arbitrary unital weak-$*$ closed subalgebras of $\Hinf$.

\section{Nevan\-linna-Pick Families}\label{S:NPfamilies}

Let $H$ denote a reproducing kernel Hilbert space of complex-valued 
functions on a set $X$, with $k_\lambda$ as the reproducing kernel at $\lambda \in X$. 
Let $k(\lambda,\mu) := \ip{ k_\mu, k_\lambda }$ denote the associated 
positive kernel on $ X \times X$.  
We define the \emph{multiplier algebra of $H$} by
\begin{align*}
 \fM(H) = \{ f: X \to \bC : fh \in H \FORAL h \in H  \},
\end{align*}
where the product $fg$ is defined pointwise.
Each multiplier $f$ on $H$ defines the corresponding multiplication operator 
$M_f$, which is bounded by an application of the closed graph theorem.  
We may then regard $\fM(H)$ as an abelian subalgebra of $\B(H)$.
It is well known that an operator $T \in \B(H)$ defines a multiplication operator 
on $H$ if and only if each kernel function $k_\lambda$ is an eigenvector for $T^*$.  
{}From this it easily follows that $\fM(H)$ is closed in the weak operator topology.

We say that a \textit{unital} subalgebra $\fA$ of $\B(H)$ is a \emph{dual algebra} 
if it is closed in the weak-$*$ topology on $\B(H)$.
If $\fA$ is contained in the multiplier algebra of $H$, we say that $\fA$ is 
a \emph{dual algebra of multipliers of $H$}.
Suppose that $E$ is a finite subset of $X$.  Let $\fJ_E $ denote 
the weak-$*$ closed ideal of multipliers in $\fA$ that vanish on $E$.
When $E=\{\lambda\}$, we write $\fJ_\lambda$.
If the context is clear, we may write $\fJ := \fJ_E$.  

Suppose $L$ is an invariant subspace of $\fA$ and let $P_L$ denote the orthogonal projection of $H$ onto $L$. Then $L$ is also a 
reproducing kernel Hilbert space on the points in $X$ which are not annihilated by $L$. The reproducing 
kernel on this set is given by $P_L k_\lambda$.  
Every $f \in \fA$ defines a multiplier on $L$ since
\[
 (M_f|_L)^* P_Lk_\lambda= P_L M_f^* P_Lk_\lambda = P_L M_f^* k_\lambda = 
 \ol{f(\lambda)}P_L k_\lambda .
\]
The following lemma shows that, in certain cases, 
it is possible to extend this kernel to all of $X$.

\begin{lem} \label{L:extend kL}
Suppose $\fA$ is a dual algebra of multipliers on $H$.  
If $L = \fA[h]$ is a cyclic invariant subspace of $\fA$, 
it is possible to extend the kernel function $P_Lk_\lambda$
$($which is non-zero on $\{\lambda : h(\lambda) \ne 0 \})$ to a kernel $k^L_\lambda$ defined
on all of $X$ so that $k^L_\lambda = 0$ only when $\fJ_\lambda[h]=\fA[h]$. 
For each $f \in \fM(H)$, $k^L_\lambda$ satisfies the fundamental relation
\[ P_L M_f^* k^L_\lambda = \ol{f(\lambda)} k^L_\lambda ,  \]
and thus
\[
 \ip{ M_f k^L_{\lambda},  k^L_{\lambda}} = f(\lambda) \| k^L_{\lambda} \|^2 
 \qforal \lambda \in X.
\] 
\end{lem}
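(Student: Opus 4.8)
The plan is to build $k^L_\lambda$ by "twisting" the kernel $P_L k_\lambda$ by a scalar depending on $\lambda$, chosen precisely to kill the ambiguity on the zero set of $h$ and to enforce the eigenvector relation everywhere. First I would handle the generic points: if $h(\lambda)\ne 0$, set $k^L_\lambda := P_L k_\lambda$; this is nonzero (since $\langle P_L k_\lambda, h\rangle = \overline{h(\lambda)}\ne 0$, as $h \in L$), and the displayed computation just before the lemma already gives $(M_f|_L)^* P_L k_\lambda = \overline{f(\lambda)} P_L k_\lambda$, which is the fundamental relation on this set. The content is thus entirely about extending across $\{\lambda : h(\lambda)=0\}$.

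For such a $\lambda$, the idea is to pick any $g \in \fA$ with $g(\lambda)\ne 0$ — such a $g$ exists when $\fJ_\lambda[h]\subsetneq \fA[h]$, since otherwise $h \in \fJ_\lambda L \subseteq \overline{\fJ_\lambda H}$ would force $h$ to vanish in a way compatible with all of $\fA$ vanishing at $\lambda$ — and consider the vector $P_L M_g^* k_\lambda = P_L(M_g|_L)^* k_\lambda$. Actually the cleaner route: choose $g\in\fA$ with $(gh)(\lambda)\ne 0$, i.e.\ $g(\lambda)\ne 0$ (possible exactly when $\fJ_\lambda[h]\ne\fA[h]$), and note $gh \in L$ with $(gh)(\lambda)\ne 0$, so $P_L k_\lambda \ne 0$ whenever such a $g$ exists — meaning $P_L k_\lambda$ already extends continuously and is automatically nonzero off the bad set. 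So in fact I would argue $P_L k_\lambda = 0$ if and only if $\langle u, k_\lambda\rangle = 0$ for all $u \in L$, i.e.\ every element of $L = \overline{\fA h}$ vanishes at $\lambda$, which (since $\fA$ is unital and $h$ is the cyclic vector) happens exactly when $\fJ_\lambda[h] = \fA[h]$; and when $P_Lk_\lambda\ne 0$, setting $k^L_\lambda := P_Lk_\lambda$ and verifying $P_L M_f^* k^L_\lambda = P_L M_f^* P_L k_\lambda = P_L M_f^* k_\lambda = \overline{f(\lambda)}P_Lk_\lambda$ (using that $M_f^* k_\lambda = \overline{f(\lambda)}k_\lambda$ for $f\in\fM(H)$, and $L$ is invariant so $P_L M_f^* P_L = P_L M_f^*$ on... wait, one needs $P_L M_f^* = P_L M_f^* P_L$, which holds because $L$ is $\fA$-invariant hence $L^\perp$ is $\fA^*$-invariant) completes it. The final displayed identity $\langle M_f k^L_\lambda, k^L_\lambda\rangle = f(\lambda)\|k^L_\lambda\|^2$ follows by taking the inner product of the fundamental relation's adjoint form: $\langle M_f k^L_\lambda, k^L_\lambda\rangle = \langle k^L_\lambda, M_f^* k^L_\lambda\rangle = \langle k^L_\lambda, P_L M_f^* k^L_\lambda\rangle = \langle k^L_\lambda, \overline{f(\lambda)}k^L_\lambda\rangle = f(\lambda)\|k^L_\lambda\|^2$, where the second equality uses $k^L_\lambda \in L$.

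The main obstacle I anticipate is the precise bookkeeping of the biconditional "$k^L_\lambda = 0$ iff $\fJ_\lambda[h] = \fA[h]$." One direction is easy: if $P_L k_\lambda \ne 0$ then, since $P_L k_\lambda \perp \fJ_\lambda L$ need not hold directly, I instead observe $\langle gh, k_\lambda\rangle = \overline{(gh)(\lambda)}$, so $P_L k_\lambda \ne 0$ exactly when some element of $\fA h$ (equivalently of $L$, by density and continuity of evaluation) is nonzero at $\lambda$, equivalently some $g\in\fA$ has $g(\lambda)h(\lambda)\cdots$ — one must be slightly careful when $h(\lambda) = 0$, using that $\fJ_\lambda[h]=\fA[h]$ means the closure of $\fJ_\lambda h$ equals the closure of $\fA h$, so $h$ itself lies in $\overline{\fJ_\lambda h}$, and every function in this closed subspace vanishes at $\lambda$ (evaluation at $\lambda$ is continuous on $H$), forcing $P_L k_\lambda = 0$; conversely if $P_L k_\lambda = 0$ then every element of $L$ vanishes at $\lambda$, in particular $h$ does, and more strongly $L \subseteq \ker(\text{ev}_\lambda)$, from which $\fA h \subseteq \ker(\text{ev}_\lambda)$, and a short argument shows $\overline{\fJ_\lambda h}$ already contains $h$. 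I would present this equivalence as the crux and keep the rest as routine verification. Note that in the case distinction no genuine "twisting" by scalars is needed — the honest statement is simply that $P_L k_\lambda$ itself already does the job, extending by $0$ on the bad set — so the lemma is really an observation that the obvious candidate works, and the only subtlety is identifying its zero set.
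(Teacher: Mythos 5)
Your argument breaks down at the claimed equivalence ``$P_L k_\lambda = 0$ if and only if $\fJ_\lambda[h]=\fA[h]$,'' and with it the conclusion that $k^L_\lambda := P_L k_\lambda$ ``already does the job'' with no genuine extension required. What is actually true is that $P_L k_\lambda = 0$ if and only if $h(\lambda)=0$: on the one hand $\ip{h,k_\lambda}=h(\lambda)$, so $h(\lambda)\ne0$ gives $P_L k_\lambda\ne0$; on the other, if $h(\lambda)=0$ then $(fh)(\lambda)=f(\lambda)h(\lambda)=0$ for every $f\in\fA$, so every element of $L=\ol{\fA h}$ vanishes at $\lambda$ (evaluation is continuous) and $P_Lk_\lambda=0$. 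But $h(\lambda)=0$ is a \emph{strictly weaker} condition than $\fJ_\lambda[h]=\fA[h]$. The ``short argument'' you invoke for the converse --- that $\fA h\subseteq\ker(\mathrm{ev}_\lambda)$ forces $h\in\ol{\fJ_\lambda h}$ --- does not exist; the hypothesis controls only the values at $\lambda$ of elements of $L$, not their membership in the smaller subspace $\ol{\fJ_\lambda h}$.

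Example~\ref{E:Bergman1}, which immediately follows this lemma in the paper, exhibits the gap concretely. Take $H=A^2(\bD)$, $\fA=H^\infty$, and $h$ vanishing at $\lambda$ to order exactly $r\ge1$. Then $h(\lambda)=0$, so $P_Lk_\lambda=0$; yet every $gh$ with $g\in\fJ_\lambda$ vanishes at $\lambda$ to order at least $r+1$, while $h$ vanishes to order exactly $r$. Since the $r$th Taylor coefficient at $\lambda$ is a continuous linear functional on $A^2(\bD)$, this shows $h\notin\ol{\fJ_\lambda h}$, i.e.\ $\fJ_\lambda[h]\subsetneq\fA[h]$, so the lemma demands a \emph{nonzero} $k^L_\lambda$ at a point where $P_Lk_\lambda=0$. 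The point of the lemma is precisely this nontrivial extension, and the paper's construction is: since $\dim\fA/\fJ_\lambda=1$ forces $\dim\fA[h]/\fJ_\lambda[h]\le1$, in the case $P_Lk_\lambda=0$ but $\fJ_\lambda[h]\subsetneq\fA[h]$ one chooses $k^L_\lambda$ to be a unit vector in $\fA[h]\ominus\fJ_\lambda[h]$. The fundamental relation then follows because $\fJ_\lambda[h]\in\Lat\fA$, so $P_LM_f^*$ preserves the line $L\ominus\fJ_\lambda[h]=\bC k^L_\lambda$, and the eigenvalue is identified from $\ip{M_fk^L_\lambda,k^L_\lambda}=f(\lambda)\|k^L_\lambda\|^2$, which uses that $(f-f(\lambda)\one)k^L_\lambda$ lies in $\fJ_\lambda[h]$ and is hence orthogonal to $k^L_\lambda$. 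Your treatment of the generic case $h(\lambda)\ne0$ and of the final displayed identity is fine, but the case you dismissed is the entire content of the result.
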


\begin{proof}
Since $L = \fA[h]$ is a cyclic subspace and $\dim \fA/\fJ_\lambda = 1$, 
it follows that $\dim \fA[h]/\fJ_\lambda[h] \le 1$.
If $P_L k_\lambda \ne 0$, then this is an eigenvector for $P_L \fA^*$
as shown above.  
For $f \in \fJ_\lambda$, 
\[ 
 \ip{ k^L_\lambda, fh} = \ip{ P_L M_f^* k^L_\lambda, h} = \ip{\ol{f(\lambda)} k^L_\lambda,h} = 0 .
\]
So $P_Lk_\lambda$ belongs to $\fA[h] \ominus \fJ_\lambda[h]$
and we set $k^L_\lambda = P_L k_\lambda$.

When $P_L k_\lambda = 0$ but $\dim \fA[h]/\fJ_\lambda[h] = 1$, 
we pick a unit vector $k^L_\lambda$ in $\dim \fA[h] \ominus \fJ_\lambda[h]$.  
Then for $f \in \fM(H)$, $f-f(\lambda) \one$ lies in $\fJ_\lambda$.  
Hence
\[
 \ip{ M_f k^L_\lambda,  k^L_\lambda} =
 f(\lambda) \ip{ k^L_\lambda,  k^L_\lambda} - 
 \ip{ (f-f(\lambda) \one) k^L_\lambda,  k^L_\lambda} =
 f(\lambda) .
\]
Also, $\fJ_\lambda[h] \in \Lat(\fA)$ and thus $P_L M_f^* k^L_\lambda$
lies in $L \ominus \fJ_\lambda[h] = \bC k^L_\lambda$.
The previous computation shows that 
\[
 P_L M_f^* k^L_\lambda = \ol{f(\lambda)} k^L_\lambda .  
\]
This extends the kernel $k^L$ to all of $X$.
\end{proof}

This kernel allows us to evaluate the multipliers at a generally much
larger subset of $X$ (see Example~\ref{E:Bergman1} below) than
just using $P_L k_\lambda$.
However some continuity is lost for evaluation of functions in $L$.
Since we are primarily interested in interpolation questions about 
the multiplier algebra, evaluation of the multipliers is more important.

\begin{defn}
For any dual algebra $\fA$ of multipliers on $H$ and any cyclic 
invariant subspace $L \in \CycLat(\fA)$, 
let $k^L_\lambda$ denote the extended
reproducing kernel on $L$ constructed in Lemma~\ref{L:extend kL} at the point $\lambda$.
\end{defn}

\begin{eg} \label{E:Bergman1}
In spaces of analytic functions, it is often possible to fully describe 
the kernel structure on $L \in \CycLat(\fA)$.  
Indeed, suppose that $H = A^2(\bD)$ is the Bergman space, 
and $\fA = \Hinf$.
Let $L = \Hinf[h]$ for some non-zero function $h \in A^2(\bD)$.  
Then 
\[
 X_L = \{ \lambda \in \bD : P_L k_\lambda \ne 0 \} 
        = \{ \lambda \in \bD : h(\lambda) \ne 0 \} .
\]
However, since the Bergman space consists of analytic functions,
$h$ vanishes only to some finite order on each of its zeros.

It is routine to verify that for each $n \geq 0$ and $\lambda \in \bD$, 
there is a function $k_{\lambda,n} \in A^2(\bD)$ such that 
\[ \ip{ h, k_{\lambda,n} } = h^{(n)}(\lambda) \qfor h \in A^2(\bD) .\]  
Suppose that  $h$ vanishes at $\lambda$ with multiplicity $r \ge 0$.  
We claim that $P_L k_{\lambda,r} \ne 0$ and
$P_L k_{\lambda,n} = 0$ for $0 \le n < r$. 
Indeed, for any $f \in \Hinf$ and $n \le r$,
\begin{align*}
 \ip{f h, k_{\lambda,n}} &= (f h)^{(r)}(\lambda_i) \\&
 = \sum_{j=0}^r {r\choose j} f^{(j)}(\lambda) h^{(r-j)}(\lambda) \\&
 = \begin{cases} 0 &\qif 0 \le n<r\\
    f(\lambda) h^{(r)}(\lambda) &\qif n=r . \end{cases}
\end{align*}
So $P_L k_{\lambda,n} = 0$ for $0 \le n < r$.
Set $k^L_\lambda = P_L k_{\lambda,r}/\|P_L k_{\lambda,r}\|$.
This calculation shows that if $f\in\fJ_\lambda$, then $\ip{fh,k^L_\lambda} = 0$.
So $k^L_\lambda$ belongs to $\fA[h]\ominus\fJ_\lambda[h]$.
Now for $f,g \in \Hinf$,
\begin{align*}
 \ip{f h, M_g^* k^L_\lambda} &=
 \ip{gf h, k^L_\lambda} \\&=
 g(\lambda) f(\lambda) h^{(r)}(\lambda_i) =
 \ip{f h, \ol{g(\lambda)} k^L_\lambda}.
\end{align*}
It follows that 
\[
 (M_g|_L)^* k^L_\lambda = P_L M_g^* k^L_\lambda = \ol{g(\lambda)} k^L_\lambda .
\]
Thus $g$ is a multiplier for this reproducing kernel.

An identical construction is possible for any space of analytic functions on
the unit disk in which the Taylor coefficients of the power series expansion
about $0$ are continuous and composition by the M\"obius automorphisms 
of the  disk are bounded maps.  
\end{eg}

\begin{rem} \label{R:why cyclic}
The Bergman space is also a good place to illustrate 
why we require cyclic invariant subspaces.
The Bergman shift $B$ is a universal dilator for strict contractions \cite{ABFP}.
For example,  fix a point $\lambda \in \bD$.
Then $B$ has an invariant subspace $L$ such that $N = L \ominus \fJ_\lambda L$
is infinite dimensional.  
Since $(M_f|_L)^*|_N = \ol{f(\lambda)} I_N$, there is no canonical choice for $k^L_\lambda$.
Similarly, for any $0 < r < 1$, we can obtain the infinite ampliation $rB^{(\infty)}$
as the compression of $B$ to a semi-invariant subspace $M$, and
this has the same issue for every $|\lambda|<r$.
\medbreak

On the other hand, we can always identify a kernel structure on any invariant
subspace $L \in \Lat \fA$ if we allow multiplicity.
The subspaces $N_\lambda = L \ominus \fJ_\lambda L$ satisfy
$P_{N_\lambda} M_f^*|_{N_\lambda} = \ol{f(\lambda)} P_{N_\lambda}$.
So if $k$ is any unit vector in $N_L$, we obtain
\[ \ip{ M_f k,  k} = f(\lambda) \qforal f \in \fA .\]
The spaces $\{N_\lambda : \lambda \in X \}$ are linearly independent
and together they span $L$.
See the continued discussion later in Remark~\ref{R:multiplicity}.
\end{rem}

Our primary interest will be Nevan\-linna-Pick interpolation on some finite 
subset $E = \{\lambda_1,...,\lambda_n)$ of $X$ by functions in the algebra $\fA$.  
It could be the case that $\fA$ fails to separate certain points in $X$, and so 
we impose the natural constraint that $E$ contains 
at most one representative from any set of points that $\fA$ identifies.  
It follows that the kernels $k_{\lambda_i}$ form a linearly independent set.  
Indeed, since $\fA$ separates these points, we can find elements 
$p_1,...,p_n \in \fA$ such that $p_i(\lambda_j) = \delta_{ij}$.  
Hence if $\sum_{i=1}^n \alpha_i k_{\lambda_i} = 0$, we find that 
\[
 0 = M_{p_i}^* \Big( \sum_{i=1}^n \alpha_i k_{\lambda_i} \Big) = \alpha_i k_{\lambda_i}
 \qfor 1 \le i \le n.
\]  
The quotient algebra $\fA / \fJ_E$ is $n$-dimensional, 
and is spanned by the idempotents $\{ p_i + \fJ : 1 \le i \le n \}$.  
We seek to establish useful formulae for the norm on $\fA / \fJ$.  
These so-called operator algebras of idempotents have been 
studied by Paulsen in \cite{Pau01}.

\begin{defn}
Given a finite subset $E$ of $X$,   set 
\[M =  M(E) = \spn\{ k_\lambda : \lambda \in E \} .\]
For $L \in \Lat(\fA)$, define 
\[ M_L =P_L M \qand N_L = L \ominus \fJ_E L . \]
\end{defn}

Observe that if $L \in \Lat \fA$, then $\ol{\fJ_E L}$ is also invariant and
is contained in $L$.  
Thus $N_L$ is a semi-invariant subspace.  In particular, 
$P_{N_L} M_f P_L = P_{N_L} M_f P_{N_L}$ and 
compression to $N_L$ is a contractive homomorphism.
Likewise, $M(E)$ is co-invariant, and so $\ol{M(E)+L^\perp}$ is co-invariant.
Observe that 
\[ M_L = \ol{(M(E)+L^\perp)} \cap L = L \ominus (M(E)+L^\perp)^\perp,\]
and so it is also semi-invariant.

\begin{lem}
Given a finite subset $E \subset X$ on which $\fA$ separates points,
and $L = \fA[h]$ in $\CycLat(\fA)$,
the space $N_L$ is a reproducing kernel Hilbert space over $E$
with kernel $\{ k^L_\lambda : \lambda \in E \}$; and the non-zero
elements of this set form a basis for $N_L$.  

Also $M_L$ is a subspace of $N_L$ spanned by
\[ \{ k^L_\lambda = P_L k_\lambda : \lambda \in E,\ h(\lambda) \ne 0 \} ,\] 
and it is a reproducing kernel Hilbert space over $\{\lambda \in E : h(\lambda) \ne 0 \}$.
\end{lem}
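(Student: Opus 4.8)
The plan is to reduce the statement to the extended kernel of Lemma~\ref{L:extend kL} together with an elementary dimension count in the finite-dimensional space $N_L$. Choose $p_1,\dots,p_n\in\fA$ with $p_i(\lambda_j)=\delta_{ij}$, as is possible since $\fA$ separates the points of $E$. Every $f\in\fA$ differs from $\sum_i f(\lambda_i)p_i$ by an element of $\fJ_E$, so $\fA h=\fJ_E h+\spn\{p_1h,\dots,p_nh\}$. Since $\fA h$ is dense in $L$, since $P_{N_L}$ is continuous, and since $P_{N_L}$ annihilates $\fJ_E h\subseteq\fJ_E L$, the subspace $P_{N_L}(\fA h)=\spn\{P_{N_L}(p_ih):1\le i\le n\}$ is dense in $N_L$; being finite dimensional it equals $N_L$, so $\dim N_L\le n$.

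Next I would place the kernel vectors inside $N_L$. For $\lambda\in E$ and $f\in\fJ_E$ one has $f(\lambda)=0$, so Lemma~\ref{L:extend kL} gives
\[
 \ip{k^L_\lambda,fh}=\ip{P_LM_f^*k^L_\lambda,h}=\ol{f(\lambda)}\ip{k^L_\lambda,h}=0 ,
\]
hence $k^L_\lambda\perp\fJ_E L$, i.e. $k^L_\lambda\in N_L$. The pairing with the generators $p_ih$ is diagonal:
\[
 \ip{p_ih,k^L_{\lambda_j}}=\ip{h,P_LM_{p_i}^*k^L_{\lambda_j}}=p_i(\lambda_j)\ip{h,k^L_{\lambda_j}}=\delta_{ij}\ip{h,k^L_{\lambda_j}} .
\]
The step I expect to need the most care is verifying that $\ip{h,k^L_{\lambda_j}}\neq0$ whenever $k^L_{\lambda_j}\neq0$. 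If $h(\lambda_j)\neq0$ this is $\ip{h,P_Lk_{\lambda_j}}=\ol{h(\lambda_j)}$; if instead $h(\lambda_j)=0$ while $\dim\fA[h]/\fJ_{\lambda_j}[h]=1$, then $h\notin\fJ_{\lambda_j}[h]$, since the closed invariant subspace $\fJ_{\lambda_j}[h]$ would otherwise contain $\fA[h]=L$, so $h$ has nonzero component along the unit vector $k^L_{\lambda_j}$ spanning $L\ominus\fJ_{\lambda_j}[h]$. Consequently the matrix $[\ip{p_ih,k^L_{\lambda_j}}]$, restricted to those indices $j$ for which $k^L_{\lambda_j}\neq0$, is diagonal with nonzero diagonal; pairing a vanishing linear combination of these $k^L_{\lambda_j}$ with the corresponding $p_ih$ shows they are linearly independent, so $\dim N_L$ is at least their number.

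To see there are no additional dimensions, I would show $P_{N_L}(p_ih)=0$ whenever $k^L_{\lambda_i}=0$. By Lemma~\ref{L:extend kL} this forces $\fJ_{\lambda_i}[h]=\fA[h]=L$, so $\fJ_{\lambda_i}h$ is dense in $L$. As $p_i$ vanishes on $E\setminus\{\lambda_i\}$ we have $p_i\fJ_{\lambda_i}\subseteq\fJ_E$, whence, using continuity of $M_{p_i}$,
\[
 \fJ_E L\supseteq\ol{M_{p_i}(\fJ_{\lambda_i}h)}\supseteq\ol{M_{p_i}L}\ni p_ih ,
\]
so $P_{N_L}(p_ih)=0$. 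Combining the two bounds, $\dim N_L$ equals the number of nonzero $k^L_\lambda$ with $\lambda\in E$, and these form a basis of $N_L$; equipped with these reproducing vectors, $N_L$ is a reproducing kernel Hilbert space over $E$ with kernel $\{k^L_\lambda:\lambda\in E\}$.

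Finally, for $M_L$: from $\ip{fh,k_\lambda}=f(\lambda)h(\lambda)$ we get $P_Lk_\lambda=0$ exactly when $h(\lambda)=0$, and when $h(\lambda)\neq0$ we have $P_Lk_\lambda=k^L_\lambda$ by Lemma~\ref{L:extend kL}. Hence
\[
 M_L=\spn\{P_Lk_\lambda:\lambda\in E\}=\spn\{k^L_\lambda:\lambda\in E,\ h(\lambda)\neq0\}
\]
is the span of part of the basis of $N_L$ just found, so it is a subspace of $N_L$ and is a reproducing kernel Hilbert space over $\{\lambda\in E:h(\lambda)\neq0\}$, here for the genuine evaluations $\xi(\lambda)=\ip{\xi,k_\lambda}=\ip{\xi,k^L_\lambda}$.
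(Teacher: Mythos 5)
Your proof is correct, and it reaches the same conclusion by a genuinely different route. The paper bounds $\dim N_L$ by factoring the ideal: it defines $E_L=\{\lambda\in E: k^L_\lambda\neq 0\}$, observes that $\ol{\fJ_\lambda L}=L$ for $\lambda\in E\setminus E_L$, and uses the factorization $\fJ_E=\fJ_{E_L}\fJ_{E\setminus E_L}$ to conclude $\ol{\fJ_E L}=\ol{\fJ_{E_L}L}$, giving $\dim N_L\le |E_L|$; linear independence of the nonzero $k^L_\lambda$ then follows immediately from the standard fact that eigenvectors of $(M_f|_L)^*$ belonging to distinct characters are independent. You instead work with the idempotents $p_i$ throughout: you first get $\dim N_L\le n$ from $N_L=\spn\{P_{N_L}(p_ih)\}$, then sharpen this to $|E_L|$ by showing directly (via $p_i\fJ_{\lambda_i}\subseteq\fJ_E$ and continuity of $M_{p_i}$) that $P_{N_L}(p_ih)=0$ whenever $k^L_{\lambda_i}=0$, and you prove independence via the dual pairing $\ip{p_ih,k^L_{\lambda_j}}=\delta_{ij}\ip{h,k^L_{\lambda_j}}$ combined with your (correct, and necessary) observation that $\ip{h,k^L_\lambda}\neq 0$ whenever $k^L_\lambda\neq 0$. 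Your version sidesteps the ideal-factorization step, at the cost of needing that non-vanishing lemma, which the paper's eigenvector argument does not require. It also exhibits the $P_{N_L}(p_ih)$ and the $k^L_{\lambda_i}$ as a dual-pair of bases, which is a little extra structure the paper does not record. Two cosmetic points: with the paper's convention $\ip{h,k_\lambda}=h(\lambda)$, your inner product is $h(\lambda_j)$ rather than $\ol{h(\lambda_j)}$ (the conclusion is unaffected); and the containment $\fJ_E L\supseteq\ol{M_{p_i}(\fJ_{\lambda_i}h)}$ should read $\ol{\fJ_E L}\supseteq\ol{M_{p_i}(\fJ_{\lambda_i}h)}$, since $\fJ_E L$ is not a priori closed --- harmless here, as $N_L=L\ominus\ol{\fJ_E L}$.
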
 

\begin{proof}
For each $\lambda \in E$,
\[
 k^L_\lambda \in L \ominus \fJ_\lambda[h] \subset L \ominus \fJ_E[h] = N_L .
\]
Let $E_L = \{\lambda \in E : k^L_\lambda \ne 0 \}$.
Then for $\lambda\in E\bsl E_L$, $L = \ol{\fJ_\lambda L}$.
Since $\fJ_{E_L} = \prod_{\lambda \in E\bsl E_L} \fJ_\lambda$, we
see that $\ol{\fJ_{E_L} L} = L$.
Hence we may factor $\fJ_E = \fJ_{E_L} \fJ_{E\bsl E_L}$ and note that
\[ \ol{\fJ_E L} = \ol{ \fJ_{E_L} \ol{ \fJ_{E\bsl E_L} L}} =  \ol{ \fJ_{E_L} L}. \]
Now $\dim \fA/\fJ_{E_L} = |E_L|$, so $\dim N_L \le |E_L|$.
But $N_L$ contains the non-zero vectors $k^L_\lambda$ for 
$\lambda\in E_L$.  For $f\in\fA$ and $\lambda\in E_L$,
\begin{align*}
 P_{N_L} M_f^* k^L_\lambda &= P_L P_{\fJ_E[h]}^\perp M_f^* k^L_\lambda 
 = P_L M_f^* k^L_\lambda = \ol{f(\lambda)} k^L_\lambda .
\end{align*} 
Because $\fA$ separates the points of $E_L$, it follows that these vectors
are eigenfunctions for distinct characters of $\fA$, and thus are linearly independent.
This set has the same cardinality as $\dim N_L$, and therefore it forms a basis.

Now $M_L$ is spanned by $\{P_L k^\lambda : \lambda \in E\}$,
and it suffices to use the non-zero elements.  These coincide
with $k^L_\lambda$ on $E_L^0 := \{\lambda \in E : h(\lambda) \ne 0 \}$.
This is a subset of the basis for $N_L$, and hence $M_L$ is a subspace of $N_L$. 
It is also a reproducing kernel space on $E_L^0$.
\end{proof}

The equality $M_L = N_L$ holds in the following important case.
The proof is immediate from the lemma.

\begin{cor} \label{C:cyclic ML}
Suppose that $E$ is a finite subset of $X$ on which $\fA$ separates points,
and $L = \fA[h]$ in $\CycLat(\fA)$. 
If $h$ does not vanish on $E$, then $M_L = N_L$.
\end{cor}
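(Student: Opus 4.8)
The plan is to deduce Corollary~\ref{C:cyclic ML} directly from the preceding lemma. The lemma already furnishes two facts about the cyclic subspace $L = \fA[h]$: first, that $N_L = L \ominus \fJ_E L$ has the nonzero kernels $\{k^L_\lambda : \lambda \in E\}$ as a basis, indexed by $E_L = \{\lambda \in E : k^L_\lambda \ne 0\}$; and second, that $M_L = P_L M(E)$ is spanned by $\{P_L k_\lambda : \lambda \in E,\ h(\lambda) \ne 0\}$, which coincides with $\{k^L_\lambda : \lambda \in E^0_L\}$ where $E^0_L = \{\lambda \in E : h(\lambda) \ne 0\}$. So $M_L$ and $N_L$ are both spanned by subfamilies of the same basis, and it suffices to check that the two index sets $E^0_L$ and $E_L$ coincide.

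First I would record the inclusion $E^0_L \subseteq E_L$, which is automatic: by Lemma~\ref{L:extend kL}, $P_L k_\lambda$ is nonzero precisely on $\{\lambda : h(\lambda) \ne 0\}$, so $h(\lambda) \ne 0$ forces $k^L_\lambda = P_L k_\lambda \ne 0$. This inclusion holds for every cyclic $L$, with no hypothesis on $h$. Then I would invoke the standing hypothesis of the corollary: if $h$ does not vanish anywhere on $E$, then $E^0_L = E$, and hence trivially $E_L \subseteq E = E^0_L$ as well. Combining the two inclusions gives $E^0_L = E_L = E$.

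Finally I would conclude: since $M_L$ is the span of $\{k^L_\lambda : \lambda \in E^0_L\}$ and $N_L$ is the span of $\{k^L_\lambda : \lambda \in E_L\}$, and these index sets are equal, we get $M_L = N_L$. One could phrase this even more briefly by noting that the lemma exhibits $M_L$ as a subspace of $N_L$ spanned by a sub-basis indexed by $E^0_L$, and $N_L$ has a basis indexed by $E_L \supseteq E^0_L$; equality of the spans is then equivalent to $E_L \subseteq E^0_L$, i.e. $E_L = E$, which is immediate once $h$ is nonvanishing on $E$ since $E_L \subseteq E$ always. There is no real obstacle here — the corollary is essentially a bookkeeping consequence of the lemma, as the authors indicate with the remark that ``the proof is immediate.'' The only point that needs a word is the observation that $h(\lambda) \ne 0$ implies $k^L_\lambda \ne 0$, and that is exactly the parenthetical assertion already proved in Lemma~\ref{L:extend kL}.
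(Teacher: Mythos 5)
Your argument is correct and is exactly the ``bookkeeping'' the paper has in mind when it says the proof is immediate from the preceding lemma: once $h$ is nonvanishing on $E$, the index set $E^0_L$ is all of $E$, which forces $E_L = E$ and hence makes $M_L$ and $N_L$ spans of the same basis. No further comment is needed.
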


We will be searching for appropriate distance formulae from elements of $\fA$ 
to an ideal $\fJ_E$ in order to obtain Nevan\-linna-Pick type results.
We note certain easy estimates which always hold.

\begin{lem} \label{L:dist1}
Suppose that $\fA$ is a dual algebra on $H$,
and let $\fJ$ be a \wot-closed ideal. 
Set $N_L = L \ominus \fJ L$ and $M_L = P_L M(E)$ for $L \in \Lat \fA$.
Then the following distance estimates hold:
\begin{align*}
 \dist(f, \fJ) &\ge \sup_{L \in \Lat(\fA)} \| P_{N_L} M_f P_{N_L}  \|
 = \sup_{L \in \CycLat(\fA)} \| P_{N_L} M_f P_{N_L}  \|  \\&
 \ge  \sup_{L \in \CycLat(\fA)} \| P_{M_L} M_f P_{M_L}  \|  
 = \sup_{L \in \Lat(\fA)} \| P_{M_L} M_f P_{M_L} \|.
\end{align*}
\end{lem}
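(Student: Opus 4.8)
The plan is to establish the chain of inequalities and equalities by treating each relation in turn. For the first inequality $\dist(f,\fJ) \ge \sup_{L} \|P_{N_L} M_f P_{N_L}\|$, fix $L \in \Lat(\fA)$ and $g \in \fJ$. Since $\fJ L \subseteq L$ and $N_L = L \ominus \fJ L$ is semi-invariant (as noted after the definition), compression to $N_L$ is a contractive homomorphism on $\fA$, so $P_{N_L} M_{f+g} P_{N_L} = P_{N_L} M_f P_{N_L} + P_{N_L} M_g P_{N_L}$. The key point is that $P_{N_L} M_g P_{N_L} = 0$ whenever $g \in \fJ$: indeed $M_g L \subseteq \ol{\fJ L} = L \ominus N_L$, so $P_{N_L} M_g P_L = 0$, and since $P_{N_L} M_g P_{N_L} = P_{N_L} M_g P_L P_{N_L}$ this vanishes. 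Hence $\|P_{N_L} M_f P_{N_L}\| = \|P_{N_L} M_{f+g} P_{N_L}\| \le \|M_{f+g}\|$, and taking the infimum over $g$ and the supremum over $L$ gives the bound.

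For the first equality, the inequality $\sup_{\CycLat} \le \sup_{\Lat}$ is trivial since $\CycLat(\fA) \subseteq \Lat(\fA)$. For the reverse, I would show that for any $L \in \Lat(\fA)$, the quantity $\|P_{N_L} M_f P_{N_L}\|$ is approximated (in fact dominated) by the corresponding quantity over cyclic subspaces contained in $L$: given a unit vector $\xi \in N_L$ with $\|P_{N_L} M_f P_{N_L} \xi\|$ near the norm, set $L_0 = \fA[\xi] \subseteq L$. One checks that $N_{L_0} = L_0 \ominus \fJ L_0$ still contains (a suitable compression of) $\xi$ and that $\|P_{N_{L_0}} M_f P_{N_{L_0}}\| \ge \|P_{N_L} M_f \xi\|$ — more precisely, since $L_0$ is invariant and $N_{L_0}$ is the compression capturing the $\fA/\fJ$-action on the cyclic vector, the relevant matrix coefficient is preserved. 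This reduction to cyclic subspaces is the step I expect to require the most care: one must verify that passing to $\fA[\xi]$ does not shrink the compressed operator's norm, using that $\fJ L_0 \subseteq \ol{\fJ L} \cap L_0$ so that $N_{L_0}$ maps onto the relevant quotient.

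The third inequality $\sup_{\CycLat} \|P_{N_L} M_f P_{N_L}\| \ge \sup_{\CycLat} \|P_{M_L} M_f P_{M_L}\|$ is immediate from the preceding lemma: for $L = \fA[h]$ cyclic, $M_L$ is a subspace of $N_L$, and $M_L$ is semi-invariant (as observed, $M_L = L \ominus (M(E)+L^\perp)^\perp$), so compression to $M_L$ factors through compression to $N_L$; hence $\|P_{M_L} M_f P_{M_L}\| = \|P_{M_L} (P_{N_L} M_f P_{N_L}) P_{M_L}\| \le \|P_{N_L} M_f P_{N_L}\|$, and taking suprema gives the claim. Finally, for the last equality $\sup_{\CycLat} \|P_{M_L} M_f P_{M_L}\| = \sup_{\Lat} \|P_{M_L} M_f P_{M_L}\|$, again $\le$ is trivial, and for $\ge$ one uses the same cyclic-reduction idea: for general $L$, $M_L = P_L M(E)$ is finite-dimensional and spanned by the vectors $P_L k_\lambda$; choosing $h$ appropriately among vectors of the form $\sum \alpha_\lambda P_L k_\lambda$ (or simply $h \in M_L$), the cyclic subspace $\fA[h]$ has its $M$-compression containing the relevant data, so the supremum over cyclic subspaces already attains the general value. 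I would remark that the left-hand chain and the right-hand chain are parallel, and the only genuinely new content beyond bookkeeping is the $\fJ$-annihilation computation in the first inequality and the cyclic-vector reduction; the rest follows from semi-invariance and the structure established in the previous two lemmas.
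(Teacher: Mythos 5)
Your proposal follows the same route as the paper's proof: the first inequality by semi-invariance of $N_L$ and the vanishing of $P_{N_L} M_g P_{N_L}$ for $g\in\fJ$, the third inequality by nesting $M_L\subseteq N_L$, and both equalities by a reduction from a general invariant subspace to a cyclic one contained in it. The crux you flag is handled in the paper exactly by the observation you cite, made quantitative: for $L_0=\fA[h]\subseteq L$ one has $\ol{\fJ L_0}\subseteq\ol{\fJ L}$, so for any $\zeta\in L_0$,
\[
 \|P_{N_{L_0}}\zeta\|^2 \;=\; \|\zeta\|^2 - \|P_{\ol{\fJ L_0}}\zeta\|^2 \;\ge\; \|\zeta\|^2 - \|P_{\ol{\fJ L}}\zeta\|^2 \;=\; \|P_{N_L}\zeta\|^2 ;
\]
applying this with $\zeta = M_f h$ (which lies in $\fA[h]$) and taking the supremum over unit vectors $h\in L$ gives $\|P_{N_L}M_fP_{N_L}\|=\|P_{N_L}M_fP_L\|\le\sup_{\CycLat}\|P_{N_{L'}}M_fP_{N_{L'}}\|$. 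The last equality is obtained by the parallel inclusion $M_{L_0}^\perp\cap L_0 = M(E)^\perp\cap L_0 \subseteq M(E)^\perp\cap L = M_L^\perp\cap L$, which yields $\|P_{M_{L_0}}\zeta\|\ge\|P_{M_L}\zeta\|$ for $\zeta\in L_0$; this makes precise your sketch of ``choosing $h$ so that $\fA[h]$'s $M$-compression captures the relevant data,'' and it works for an arbitrary unit $h\in L$ (you do not actually need to restrict $h$ to $M_L$), again using semi-invariance to replace $P_{M_L}M_fP_{M_L}$ by $P_{M_L}M_fP_L$.
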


\begin{proof}
Suppose $L$ is an invariant subspace of $\fA$. 
For $M_f \in \fA$ and $M_g \in \fJ$, compute
\begin{align*}
 \| M_f - M_g \| &\ge  \| P_{N_L} (M_f - M_g) P_{N_L}  \|  \\&
= \| P_{N_L} M_f P_{N_L} \| 
 \ge \| P_{M_L} M_f P_{M_L} \|.
\end{align*}
Taking an infimum over $g\in \fJ$ and a supremum over $\Lat(\fA)$, 
we obtain 
\[
\dist(f, \fJ) \ge \sup_{L \in \Lat(\fA)} \| P_{N_L} M_fP_{N_L}  \|
\ge  \sup_{L \in \CycLat(\fA)} \| P_{N_L} M_f P_{N_L}  \|  . 
\]
Since $M_L$ is contained in $N_L$, we have
\[
 \sup_{L \in \CycLat(\fA)} \| P_{N_L} M_f P_{N_L}  \|    
 \ge  \sup_{L \in \CycLat(\fA)} \| P_{M_L} M_f P_{M_L} \| .
\]

Now consider an arbitrary element $L \in \Lat(\fA)$.
Then 
\begin{align*}
 \| P_{N_L} M_f P_{N_L} \| &= \| P_{N_L} M_f P_L \| =
 \sup_{\|h\|=1,\, h\in L}  \| P_{N_L} M_f P_L h\| \\&
 = \sup_{\|h\|=1,\, h\in L}  \| (P_L - P_{\fJ_E L}) M_f P_{\fA[h]} h \| \\&
 \le \sup_{\|h\|=1,\, h\in L}  \|  (P_L - P_{\fJ_E \fA[h]}) P_{N_{\fA[h]}} M_f P_{\fA[h]} h \| \\&
 = \sup_{\|h\|=1,\, h\in L}  \| P_{N_{\fA[h]}} M_f P_{\fA[h]} h \| \\&
 \le   \sup_{L \in \CycLat(\fA)} \| P_{N_L} M_f P_L \| 
 = \sup_{L \in \CycLat(\fA)} \| P_{N_L} M_f P_{N_L}  \|  .
\end{align*}
Similarly,
\begin{align*}
 \| P_{M_L} M_f P_{M_L} \| &= \| P_{M_L} M_f P_L \| 
 = \sup_{\|h\|=1,\, h\in L}  \| P_{M_L} M_f P_L h\| \\&
 = \sup_{\|h\|=1,\, h\in L}  \| P_{M_L} M_f P_{\fA[h]} h \| \\&
 = \sup_{\|h\|=1,\, h\in L}  \| P_{M_L} P_{N_{\fA[h]}} M_f P_{\fA[h]} h \| \\&
 = \sup_{\|h\|=1,\, h\in L}  \| P_{M_{\fA[h]}} M_f P_{\fA[h]} h \| \\&
 \le   \sup_{L \in \CycLat(\fA)} \| P_{M_L} M_f P_L \| 
 = \sup_{L \in \CycLat(\fA)} \| P_{M_L} M_f P_{M_L} \| . 
\end{align*}
So we may take the supremum over all invariant subspaces without
changing these two  distance estimates.
\end{proof} 

When $\fA$ is a dual algebra of multipliers, 
it is convenient to work with the adjoints due to 
their rich collection of eigenvectors.  
Lemma~\ref{L:dist1} gives us
\begin{align*}
 \dist(f, \fJ) \geq  \sup_{L \in \Lat(\fA)}\| P_L M_f^*|_{N_L}\|
 = \sup_{L \in \CycLat(\fA)}\| P_{N_L} M_f^*|_{N_L}\|.
\end{align*}

\begin{rem} \label{R:multiplicity}
This proposition shows that it suffices to look at cyclic subspaces.
In view of Lemma~\ref{L:extend kL}, this is of particular importance
when dealing with algebras of multipliers.
But in fact, there is little additional complication when the subspaces 
$N_\lambda = L \ominus \fJ_\lambda L$ have dimension greater than one.

These subspaces are at a positive angle to each other even when
they are infinite dimensional because the restriction of $P_L M_f^*$
to $N_\lambda$ is just $\ol{f(\lambda)} P_{N_\lambda}$.  
When $\fA$ separates points $\lambda$ and $\mu$, the boundedness
of $M_f^*$ yields a positive angle between eigenspaces.
Moreover the spaces $N_\lambda$ for $\lambda \in X$ span $L$.

We are interested in the norm $\| P_{N_L} M_f^*|_{N_L}\|$.
This is approximately achieved at a vector $h = \sum x_\lambda$
where this is a finite sum of vectors $x_\lambda \in N_\lambda$.
Since 
\[ P_L M_f^* h = \sum \ol{f(\lambda)} x_\lambda ,\]
it follows that $K = \spn\{ x_\lambda : \lambda \in X \}$ is invariant
for $P_L M_f^*$ for all $M_f \in \fA$.
In particular, we obtain that $\| P_{N_L} M_f^*|_{N_L}\| \le 1$
if and only if $\| P_{K} M_f^*|_{K}\| \le 1$ for each subspace $K$
of the form just described.  This is equivalent to saying 
$P_K - P_K M_f M_f^*|_K \ge 0$  because of semi-invariance.
Because the $x_\lambda$ span $K$, this occurs if and only if 
\[ 
 0\le  \Big[ \ip{(I - M_f M_f^*) x_{\lambda_j}, x_{\lambda_i}} \Big]
  = \Big[ \big(1- f(\lambda_i)\ol{f(\lambda_j)}\big) \ip{x_{\lambda_j}, x_{\lambda_i}} \Big] .
\]
Thus, the norm condition is equivalent to the simultaneous positivity 
of a family of Pick matrices.

Moreover, in the case of $\fJ = \fJ_E$ for a finite set $E=\{\lambda_1,\dots,\lambda_n \}$
which is separated by $\fA$, this family of Pick matrices is positive
if and only if we have positivity of the operator matrix
\[
  \Big[ \big( 1- f(\lambda_i)\ol{f(\lambda_j)} \big) 
  P_{N_{\lambda_i}}P_{N_{\lambda_j}}  \Big]_{n\times n} .
\]
For an arbitrary ideal $\fJ$, we can take the supremum over all
finite subsets $E$ of $X$.
\end{rem}

\begin{defn}
The collection $\{k^{L} :  L \in \CycLat(\fA) \}$ is said to 
be a \emph{Nevan\-linna-Pick family of kernels for $\fA$} 
if for every finite subset $E$ of $X$ and every $f \in \fA$,
(and $N_L = L \ominus \fJ_E L$)
\[ 
 \dist(f, \fJ_E) =  \sup_{L \in \CycLat(\fA)}\| P_{N_L} M_f^*|_{N_L}\| .
\]
\end{defn}

The following routine theorem reveals why Nevan\-linna-Pick 
families are given their name.

\begin{thm} \label{T:NPfamily}
Let $\fA$ be a dual algebra of multipliers on a Hilbert space $H$. 
The family $\{k^L :  L \in \CycLat(\fA) \}$
is a Nevan\-linna-Pick family of kernels for $\fA$ if and only if 
the following statement holds:\\[.5ex]
Given $E = \{\lambda_1, \dots, \lambda_n\}$ distinct points in $X$
which are separated by $\fA$, 
and complex scalars $w_1,\dots,w_n$, 
there is a multiplier $f$ in the unit ball of $\fA$  such that  
$f(\lambda_i) = w_i$  for  $1 \le i \le n$ if and only if the Pick matrices
\begin{align*}
\Big[
 (1-w_i\ol{w_j}) k^L(\lambda_i,\lambda_j)
\Big]_{n \times n}
\end{align*}
are positive definite for every $L \in \CycLat(\fA)$.
\end{thm}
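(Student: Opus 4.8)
The plan is to unpack the definition of a Nevanlinna-Pick family and show it is equivalent to the interpolation statement, using the distance formula together with the eigenvector structure of the kernels $k^L_\lambda$ established in Lemma~\ref{L:extend kL}. First I would observe that by definition, $\{k^L : L \in \CycLat(\fA)\}$ is an NP family exactly when for every finite separated set $E = \{\lambda_1,\dots,\lambda_n\}$ and every $f \in \fA$ one has
\[
 \dist(f,\fJ_E) = \sup_{L \in \CycLat(\fA)} \|P_{N_L} M_f^*|_{N_L}\|.
\]
Now fix such an $E$ and scalars $w_1,\dots,w_n$. Since $\fA$ separates the points of $E$ and is unital, the quotient $\fA/\fJ_E$ is $n$-dimensional and is spanned by idempotents $p_i + \fJ_E$ with $p_i(\lambda_j) = \delta_{ij}$; hence there exists $g \in \fA$ with $g(\lambda_i) = w_i$ for all $i$ (take $g = \sum w_i p_i$), and the coset $g + \fJ_E$ depends only on the data $(w_i)$. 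Therefore an interpolant of norm $\le 1$ exists if and only if $\dist(g,\fJ_E) \le 1$, i.e. (by the NP-family hypothesis) if and only if $\sup_L \|P_{N_L} M_g^*|_{N_L}\| \le 1$.

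Next I would translate the operator-norm condition on each $N_L$ into the Pick matrix condition. By the lemma preceding Corollary~\ref{C:cyclic ML}, for $L = \fA[h] \in \CycLat(\fA)$ the nonzero vectors among $\{k^L_\lambda : \lambda \in E\}$ form a basis for $N_L$, and each satisfies $P_L M_f^* k^L_\lambda = \ol{f(\lambda)} k^L_\lambda$. So $M_g^*|_{N_L}$ is diagonalized (in this possibly non-orthogonal basis) with eigenvalue $\ol{w_i}$ on $k^L_{\lambda_i}$. The condition $\|P_{N_L} M_g^*|_{N_L}\| \le 1$ is equivalent to $P_{N_L} - (P_{N_L} M_g^*|_{N_L})^*(P_{N_L} M_g^*|_{N_L}) \ge 0$; pairing this positive operator against an arbitrary vector $\sum_i a_i k^L_{\lambda_i}$ and using $M_g k^L_{\lambda_i}|_{N_L} = w_i k^L_{\lambda_i}$ together with $k^L(\lambda_i,\lambda_j) = \ip{k^L_{\lambda_j},k^L_{\lambda_i}}$, this is exactly positivity of the matrix
\[
 \big[ (1 - w_i\ol{w_j})\, k^L(\lambda_i,\lambda_j) \big]_{n\times n}.
\]
(One must note that if some $k^L_{\lambda_i} = 0$ the corresponding row and column vanish and the reduced matrix is what genuinely matters, but positivity of the full $n\times n$ matrix is equivalent to positivity of the principal submatrix on the nonzero indices.) Taking the supremum over $L \in \CycLat(\fA)$ then yields: $\dist(g,\fJ_E) \le 1$ if and only if all these Pick matrices are positive, which is the desired equivalence. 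The converse direction — that the interpolation criterion for all data forces the distance formula — follows because by Lemma~\ref{L:dist1} the inequality $\dist(f,\fJ_E) \ge \sup_L \|P_{N_L}M_f^*|_{N_L}\|$ is automatic, and if it were strict for some $f$ one could rescale $f$ so that $\sup_L\|P_{N_L}M_f^*|_{N_L}\| < 1 \le \dist(f,\fJ_E)$, contradicting the interpolation statement applied to $w_i = f(\lambda_i)$.

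I expect the main obstacle to be purely bookkeeping rather than conceptual: keeping careful track of the coset $g + \fJ_E$ versus individual representatives (only $\dist$ to $\fJ_E$ and the values $f(\lambda_i)$ are coordinate-invariant), and handling the degenerate case where some $k^L_{\lambda_i}$ vanish — equivalently where $\fJ_{\lambda_i}[h] = \fA[h]$ — so that the Pick matrix has genuine zero rows. The algebraic identity relating $\|P_{N_L}M_g^*|_{N_L}\| \le 1$ to positivity of the Pick matrix is the standard reproducing-kernel computation and should be stated rather than belabored; its only subtlety is that the basis $\{k^L_{\lambda_i}\}$ is non-orthogonal, so one writes the positive semidefiniteness test directly against linear combinations of basis vectors. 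No rescaling issues arise in the forward direction since we may assume $w \ne 0$; the $w = 0$ case is trivial as the constant function $0$ works.
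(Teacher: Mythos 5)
Your proposal takes essentially the same route as the paper's own proof: existence of an interpolant of norm at most one is reduced to $\dist(g,\fJ_E) \le 1$ for a fixed trial interpolant $g = \sum w_i p_i$ (using that $\fJ_E$ is weak-$*$ closed so the distance is attained), and the operator inequality $\|P_{N_L}M_g^*|_{N_L}\| \le 1$ is translated into the Pick matrix condition via the eigenvector relation $P_L M_g^* k^L_\lambda = \ol{g(\lambda)} k^L_\lambda$ and semi-invariance of $N_L$. The converse also mirrors the paper's, combining Lemma~\ref{L:dist1} with the interpolation hypothesis.

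One small slip in the converse direction: rescaling so that $\sup_L \|P_{N_L}M_f^*|_{N_L}\| < 1 \le \dist(f,\fJ_E)$ does not by itself yield a contradiction, since the interpolation statement then only gives $\dist(f,\fJ_E) \le 1$, which is consistent with $\dist(f,\fJ_E) = 1$. You need strict inequality on both sides, i.e.\ rescale by $1/t$ with $\sup_L < t < \dist$, so that $\sup_L < 1 < \dist$, and then the interpolation statement forces $\dist \le 1$, giving the contradiction. Alternatively (as the paper does), normalize so that $\sup_L = 1$; then interpolation gives $\dist \le 1$, while Lemma~\ref{L:dist1} gives $\dist \ge 1$, so $\dist = 1 = \sup_L$, with no contradiction argument needed.
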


\begin{proof}
Suppose that $\{k^L :  L \in \CycLat(\fA) \}$ is a Nevalinna-Pick family.
If such a multiplier $f$ exists, then the positivity of the matrices 
follows from standard results in reproducing kernel Hilbert spaces 
(see \cite{AMc02}, for instance).  

On the other hand, suppose that all such matrices are positive definite.  
Since $\fA$ separates the points in $E$,
find an arbitrary interpolant $p \in \fA$ so that $p(\lambda_i) = w_i$
(for example,  consider the functions $p_i$ as defined earlier,
and let $p = \sum_{i=1}^n w_ip_i$). 
It is a routine argument in this theory that $\| P_{N_L} M^*_p |_{ N_L} \| \le 1$
if and only if
\[ I - P_{N_L} M_p P_{N_L} M^*_p P_{N_L} \ge 0 .\]
Since $k^L_{\lambda_i}$ spans $N_L$, this holds if and only if
\begin{align*}
 0 &\le 
 \Big[ (I - P_{N_L} M_p P_{N_L} M^*_p P_{N_L}) 
 k^L_{\lambda_j}, k^L_{\lambda_i} \Big] \\ &=
 \Big[ k^L_{\lambda_j}, k^L_{\lambda_i} \Big] -
 \Big[ M^*_p k^L_{\lambda_j}, M_p^* k^L_{\lambda_i} \Big] \\ &=
 \Big[ k^L_{\lambda_j}, k^L_{\lambda_i} \Big] -
 \Big[ \ol{w_j} k^L_{\lambda_j}, \ol{w_i} k^L_{\lambda_i} \Big] \\ &=
 \Big[ (1-w_i\ol{w_j}) k^L(\lambda_i,\lambda_j) \Big].
\end{align*}

It follows that 
\begin{align*}
 \dist(p, \fJ_E) = \sup_{L \in \CycLat(\fA)}\| P_L M^*_p |_{N_L} \| \le 1 .
\end{align*}
The ideal $\fJ_E$ is weak-$*$-closed, and so the distance is attained at some $g \in \fJ$.  
The multiplier $f := p - g$ is in the unit ball of $\fA$ and interpolates the given data. 

Conversely, suppose that the interpolation property holds.
Fix $f \in \fA$ such that $\sup_{L \in \CycLat(\fA)}\| P_L M_f^*|_{N_L} \| = 1$.  
By assumption, there is a multiplier $g$ in the unit ball of $\fA$ 
such that $g(\lambda_i) = f(\lambda_i)$ for each $1 \le i \le n$.  
Hence
\[
 \dist(f,\fJ_E) \le \| f - (f-g) \| = \|g\| \le 1 .
\]
By Lemma~\ref{L:dist1}, $\dist(M_f,\fA) \geq \sup_{L \in \Lat(\fA)}\| P_L M_f^*|_{N_L} \| = 1$.
Therefore the family $ \{k^L :  L \in \CycLat(\fA) \}$
is a Nevan\-linna-Pick family of kernels for $\fA$.
\end{proof}

We are also interested when the subspaces $M_L$ suffice to compute the distance.
The same argument shows that these spaces form a Nevan\-linna-Pick
family for interpolation on the set $E$.
This will occur if we can show that cyclic subspaces generated by 
elements $h$ that do not vanish on $E$ suffice in the distance estimate.
This is not always the case, but it does happen in important special cases.

\section{Algebras with property $\bA_1(1)$} \label{S:A1}

Let $\fA$ be a dual algebra in $\B(H)$.
Given vectors $x$ and $y$  in $H$,
let $[xy^*]$ denote the vector functional $A \to \ip{Ax,y}$ for $A \in \fA$.  
Every weak-$*$ continuous functional on $\fA$ is a
(generally infinite) linear combination of these vector functionals. 

One says that $\fA$ has property $\bA_1(r)$ if, 
for each weak-$*$ continuous functional $\phi$ on $\fA$ with $\|\phi\| < 1$ , 
there are vectors $x$ and $y$ so that $\phi = [xy^*]$ and 
$\| x \|\, \| y \| < r$. 
Property $\bA_1(r)$ implies that the weak-$*$ 
and weak operator topologies of $\fA$ coincide, since the 
weak operator continuous linearly functionals are precisely 
the finite linear combinations of vector functionals.

It is well known that for any reproducing kernel Hilbert space,
the multiplier algebra $\fM(H)$ is reflexive. 
This is because $\bC k_\lambda$ is invariant for $\fM(H)^*$
for each kernel function $k_\lambda$. 
Hence, if $T \in \Alg(\Lat \fM(H))$, then $k_\lambda$ is an 
eigenvector for $T^*$. 
It follows that $T$ is a multiplier on $H$ by standard results.

Suppose $\fA$ is a dual subalgebra of $\fM(H)$  that is relatively reflexive;  
that is, the algebra satisfies
\begin{align*}
 \fA = \Alg(\Lat\fA) \cap \fM(H).
\end{align*}
The reflexivity of the whole multiplier algebra
implies that $\fA$ is actually reflexive.

We saw that a multiplier $f$ in $\fA$ defines the multiplication 
operator $M_f|_L$ on every invariant subspace $L$ of $\fA$.  
On the other hand, if $f$ is simultaneously a multiplier on each $L$ in $\Lat\fA$, 
then it is clearly in $\fM(H)$ and leaves every $L$ invariant.  
Consequently, $\fA$ is the \emph{largest} algebra of multipliers 
for the family of subspaces $\Lat(\fA)$.  
It is clear that it suffices to consider cyclic 
invariant subspaces.  So we have
\[
 \fA = \bigcap_{L \in \Lat(\fA)} \fM(L) \ \ = \bigcap_{L \in \CycLat(\fA)} \fM(L).
\]
Following the notation of Agler and McCarthy \cite{AMc02}, 
$\CycLat\fA$ is called a \emph{realizable} collection of reproducing 
kernel Hilbert spaces.

The first part of the following theorem is contained in
Hadwin and Nordgren's paper \cite[Prop.~2.5(1)]{HN82} on
$\bA_1(r)$ (which is called $D_\sigma(r)$ there).  
The second part is from Kraus and Larson \cite[Theorem~3.3]{KL86}.
We say that a subalgebra $\fA$ of $\fB$ is \textit{relatively hyper-reflexive}
if there is a constant $C$ so that
\[
 \dist(B,\fA) \le C \sup_{L\in\Lat \fA} \| P_L^\perp B P_L \| 
 \qforal B \in \fB .
\]
The optimal value of $C$ is the relative hyper-reflexivity constant.
Again, it is clear that cyclic subspaces suffice.

\begin{thm}[Hadwin-Nordgren, Kraus-Larson] \label{T: HN-KL}
Suppose $\fB$ is a dual subalgebra of $\fM(H)$ 
and has property $\bA_1(r)$.  
Then every \wot-closed unital subalgebra $\fA$ of $\fB$ is reflexive.  
Moreover, $\fA$ is relatively hyper-reflexive in $\fB$
with distance constant at most $r$.
\end{thm}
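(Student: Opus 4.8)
The plan is to prove the two assertions separately, deriving reflexivity of $\fA$ from the hyper-reflexivity estimate, which is the real content. For the hyper-reflexivity bound, fix $B \in \fB$ and set $c = \sup_{L \in \Lat\fA} \| P_L^\perp B P_L \|$; I want to produce some $A \in \fA$ with $\| B - A \| \le r c$ (or, by the remark in the excerpt, it suffices to test on cyclic $L$). The natural strategy is a duality argument: $\dist(B, \fA)$ equals the supremum of $|\phi(B)|$ over weak-$*$ continuous functionals $\phi$ on $\fB$ that annihilate $\fA$ and have $\|\phi\| \le 1$. So I would fix such a $\phi$, push its norm slightly below $1$, and invoke property $\bA_1(r)$ for $\fB$ to write $\phi = [xy^*]$ with $\|x\|\,\|y\| < r$; after rescaling assume $\|x\| < r$ and $\|y\| \le 1$.

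\textbf{The key step.} Let $L = \ol{\fA x}$, the cyclic invariant subspace of $\fA$ generated by $x$; so $x \in L$ and $P_L x = x$. The crucial observation is that $\phi|_\fA = 0$ forces $y \perp L$: for every $A \in \fA$ we have $0 = \phi(A) = \ip{Ax, y}$, and $\{Ax : A \in \fA\}$ is dense in $L$ (here I use that $\fA$ is unital, so $x \in L$ as well), hence $y \perp L$, i.e.\ $P_L^\perp y = y$. Therefore, for the given $B$,
\[
 |\phi(B)| = |\ip{Bx, y}| = |\ip{P_L^\perp B P_L x,\, y}| \le \| P_L^\perp B P_L \|\,\|x\|\,\|y\| \le c \cdot r \cdot 1 = rc.
\]
Taking the supremum over all such $\phi$ gives $\dist(B, \fA) \le r\, \sup_{L \in \Lat\fA} \| P_L^\perp B P_L \|$, which is exactly relative hyper-reflexivity with constant $\le r$. (That cyclic subspaces suffice is immediate since the $L$ produced is already cyclic.)

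\textbf{Reflexivity.} For the first assertion, suppose $T \in \Alg(\Lat\fA) \cap \fM(H)$; I must show $T \in \fA$. Apply the hyper-reflexivity estimate not to $T$ but to its image in $\fB$: since $\fB$ itself is reflexive (being a dual subalgebra of the reflexive algebra $\fM(H)$ containing every $\bC k_\lambda$ as a coinvariant subspace, as recalled just before the theorem), we first get $T \in \fB$. Now $T$ leaves every $L \in \Lat\fA$ invariant, so $P_L^\perp T P_L = 0$ for all such $L$, whence the estimate gives $\dist(T, \fA) = 0$; since $\fA$ is norm-closed, $T \in \fA$. So $\fA = \Alg(\Lat\fA) \cap \fM(H)$, i.e.\ $\fA$ is relatively reflexive, and then the reflexivity of $\fM(H)$ upgrades this to reflexivity of $\fA$ as explained in the paragraph preceding the theorem.

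\textbf{Anticipated obstacle.} The one point needing care is the density claim $\ol{\fA x} = L$ together with $x \in L$: this is where unitality of $\fA$ is essential and is exactly what makes $y \perp L$ (rather than merely $y \perp \fA x$ with $x$ possibly outside). A secondary subtlety is the passage from "$\|x\|\,\|y\| < r$" to the normalization "$\|x\| < r$, $\|y\| \le 1$": this is harmless because $[xy^*] = [(tx)(t^{-1}y)^*]$ for any $t > 0$, so one can always rebalance. Everything else — the Hahn–Banach duality description of $\dist(B,\fA)$, semi-invariance, the reduction to cyclic subspaces — is standard and already available from the earlier lemmas.
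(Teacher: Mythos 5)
The paper does not actually prove this theorem; it cites Hadwin--Nordgren \cite{HN82} for the reflexivity assertion and Kraus--Larson \cite{KL86} for the hyper-reflexivity estimate. That said, the technique you use for the hyper-reflexivity estimate is exactly the one the paper itself uses two results later in the proof of Theorem~\ref{T:dist_ideal}: factor an annihilating weak-$*$ continuous functional as $[xy^*]$ via $\bA_1(r)$, pass to the cyclic subspace $L = \ol{\fA x}$, and use unitality of $\fA$ to get $x \in L$ and $y \in L^\perp$, giving $|\phi(B)| = |\ip{P_L^\perp B P_L x, y}|$. This part of your argument is correct, and the observation that the $L$ produced is automatically cyclic is precisely why the paper restricts to $\CycLat$ in its distance formulae.

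The reflexivity part has a genuine gap. You assert that ``$\fB$ itself is reflexive (being a dual subalgebra of the reflexive algebra $\fM(H)$ containing every $\bC k_\lambda$ as a coinvariant subspace, as recalled just before the theorem).'' That is not what the paragraph preceding the theorem says: it only establishes that $\fM(H)$ is reflexive, and that a dual subalgebra of $\fM(H)$ which is \emph{already known to be relatively reflexive} is then actually reflexive. It does \emph{not} say that every dual subalgebra of $\fM(H)$ is reflexive, and that is false without further hypotheses. Worse, the reflexivity of $\fB$ is exactly the first assertion of the theorem applied to $\fA=\fB$, so invoking it here is circular. Moreover the hyper-reflexivity estimate you proved cannot supply it: that estimate bounds $\dist(B,\fA)$ only for $B\in\fB$, because the factorization $\phi|_\fB = [xy^*]|_\fB$ gives no control on $\ip{Tx,y}$ versus $\Psi(T)$ when $T\notin\fB$, where $\Psi$ ranges over annihilators of $\fA$ in $\fM(H)_*$ or $\B(H)_*$. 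So given $T\in\Alg(\Lat\fA)$, you get $T\in\fM(H)$ for free (since $\Lat\fM(H)\subset\Lat\fA$ and $\fM(H)$ is reflexive), but you still need a separate argument, using $\bA_1(r)$ in an essential way, to get $T$ into $\fB$ before the hyper-reflexivity estimate can close the gap. This is the content of Hadwin--Nordgren's Proposition~2.5(1) and is not recoverable from the ingredients you have assembled.
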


If $\fB$ has property $\bA_1(1)$, we obtain an exact distance formula.

\begin{cor} \label{C:hyperA1}
Suppose that $\fB$ has property $\bA_1(1)$, 
and let $\fA$ be a \wot-closed unital subalgebra.
Then
\[
 \dist(B, \fA) = \sup_{L \in \CycLat(\fA)}\| P^\perp_L B P_L  \| \qforal B \in \fB .
\]
\end{cor}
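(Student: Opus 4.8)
The plan is to deduce this from Theorem~\ref{T: HN-KL} by showing that when $r=1$ the relative hyper-reflexivity inequality can be upgraded to an equality. The inequality $\dist(B,\fA) \ge \sup_{L} \|P_L^\perp B P_L\|$ is trivial: for any $L \in \Lat\fA$ and any $A \in \fA$ we have $P_L^\perp A P_L = 0$, so $P_L^\perp B P_L = P_L^\perp(B-A)P_L$, whence $\|P_L^\perp B P_L\| \le \|B-A\|$; taking the infimum over $A \in \fA$ and the supremum over $L$ gives one direction. Restricting to cyclic subspaces only decreases the supremum, so it remains to prove
\[
 \dist(B,\fA) \le \sup_{L \in \CycLat(\fA)} \|P_L^\perp B P_L\|.
\]
By Theorem~\ref{T: HN-KL} with $r=1$ (valid since $\bA_1(1)$ certainly implies $\bA_1(r)$ for every $r>1$, giving distance constant at most $r$ for all $r>1$, hence at most $1$), we already have $\dist(B,\fA) \le \sup_{L\in\Lat\fA}\|P_L^\perp B P_L\|$. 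So the only genuine content is replacing $\Lat\fA$ by $\CycLat(\fA)$ on the right-hand side.

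For that reduction, I would argue exactly as in the proof of Lemma~\ref{L:dist1}: given an arbitrary $L \in \Lat\fA$ and a unit vector $h \in L$, the vector $P_L^\perp B P_L h = P_L^\perp B h$ equals $P_L^\perp B P_{\fA[h]} h$, and since $\fA[h] \subseteq L$ we have $P_L^\perp \le P_{\fA[h]}^\perp$, so $\|P_L^\perp B P_L h\| \le \|P_{\fA[h]}^\perp B P_{\fA[h]} h\| \le \sup_{K \in \CycLat(\fA)}\|P_K^\perp B P_K\|$. Taking the supremum over unit vectors $h \in L$ gives $\|P_L^\perp B P_L\| \le \sup_{K\in\CycLat(\fA)}\|P_K^\perp B P_K\|$, and then the supremum over $L \in \Lat\fA$ is bounded by the same quantity. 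Combining with the Hadwin--Larson estimate yields $\dist(B,\fA) \le \sup_{L\in\CycLat(\fA)}\|P_L^\perp B P_L\|$, and with the trivial lower bound this completes the proof.

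I do not expect a serious obstacle here: the statement is essentially a packaging of Theorem~\ref{T: HN-KL} at the critical exponent $r=1$ together with the cyclic-reduction trick already used twice in Lemma~\ref{L:dist1}. The one point requiring a word of care is justifying the distance constant $1$: Theorem~\ref{T: HN-KL} as quoted gives constant at most $r$ whenever $\fB$ has $\bA_1(r)$, and property $\bA_1(1)$ formally means the factorization holds with $\|x\|\,\|y\| < 1$, which implies $\bA_1(r)$ for every $r > 1$; letting $r \downarrow 1$ in the resulting family of inequalities gives the sharp constant. (If one prefers, one can instead invoke the $\bA_1(1)$ factorization directly inside the Hadwin--Nordgren argument to get the sharp bound in one step.) Everything else is routine.
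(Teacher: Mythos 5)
Your proof is correct and matches the paper's intent: the paper gives no proof of this corollary, treating it as immediate from Theorem~\ref{T: HN-KL} with $r=1$ together with the cyclic-reduction remark made just before that theorem ("it is clear that cyclic subspaces suffice"), which is exactly the two-step argument you spell out. One small remark: you need not route through $\bA_1(r)$ for $r>1$ and pass to the limit, since $\fB$ has $\bA_1(1)$ by hypothesis and Theorem~\ref{T: HN-KL} can be invoked directly at $r=1$; also, the hyper-reflexivity part of that theorem is due to Kraus--Larson, not "Hadwin--Larson."
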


We can use the same methods to obtain a distance formula to any 
\mbox{weak-$*$} closed ideal.
We do not have a reference, so we supply a proof.  The ideas go back
to the seminal work of Sarason \cite{Sar67}.  
An argument similar to this one is contained in the proof of \cite[Theorem~2.1]{DP98b}.

\begin{thm} \label{T:dist_ideal}
Suppose that $\fA$ is a dual algebra on $H$ with property $\bA_1(1)$,
and let $\fJ$ be any \wot-closed ideal of $\fA$.
Then we obtain
\[
 \dist(A, \fJ) =  \sup_{L \in \CycLat(\fA)}\| P_{L \ominus \fJ L} M_f^*|_{L \ominus \fJ L}\| .
\]
\end{thm}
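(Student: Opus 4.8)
The plan is to mimic the Sarason-type argument behind Corollary~\ref{C:hyperA1}, but adapted to the ideal $\fJ$ rather than a subalgebra. The inequality $\dist(A,\fJ) \ge \sup_L \|P_{L\ominus\fJ L}M_f^*|_{L\ominus\fJ L}\|$ is already contained in Lemma~\ref{L:dist1} (with the supremum over $\CycLat(\fA)$ agreeing with that over all of $\Lat(\fA)$), so only the reverse inequality needs proof. Let $r := \sup_{L \in \CycLat(\fA)}\|P_{L\ominus\fJ L}M_f^*|_{L\ominus\fJ L}\|$; I want to show $\dist(A,\fJ)\le r$, i.e.\ that $A$ is within $r$ of $\fJ$. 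By a Hahn--Banach/duality argument it suffices to show that every weak-$*$ continuous functional $\phi$ on $\fA$ that annihilates $\fJ$ and has $\|\phi\|<1$ satisfies $|\phi(A)|\le r$. (Here one uses that $\fJ$ is weak-$*$ closed, so the predual of $\fA/\fJ$ is exactly the annihilator of $\fJ$ in the predual of $\fA$, and $\dist(A,\fJ)=\sup\{|\phi(A)|:\phi\in\fJ^\perp,\ \|\phi\|\le 1\}$.)

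The key step is to exploit property $\bA_1(1)$: given such a $\phi$ with $\|\phi\|<1$, write $\phi=[xy^*]$, that is $\phi(B)=\ip{Bx,y}$ for all $B\in\fA$, with $\|x\|\,\|y\|<1$; after rescaling we may assume $\|x\|,\|y\|<1$. Now set $L := \fA[x] = \ol{\fA x}$, a cyclic invariant subspace, so $x\in L$ and we may replace $y$ by $P_L y$. Because $\phi$ annihilates $\fJ$, for every $g\in\fJ$ and every $f\in\fA$ we have $0=\phi(gf)=\ip{gfx,y}$ — wait, one must be careful that $\phi$ annihilating $\fJ$ gives $\ip{Gx,P_Ly}=0$ for all $G\in\fJ$, hence $P_Ly\perp\fJ x$, and since $\fJ$ is an ideal, $P_Ly\perp\ol{\fJ x}$; but in fact $\ol{\fJ L}=\ol{\fJ\fA x}=\ol{\fJ x}$, so $P_Ly\in L\ominus\fJ L = N_L$. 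Thus $y$ may be taken in $N_L$, while $x\in L$. Writing $x = P_{N_L}x + (x - P_{N_L}x)$ with the second summand in $\fJ L$, and using that $y\in N_L$ is orthogonal to $\fJ L$ together with semi-invariance of $N_L$, we get
\[
 |\phi(A)| = |\ip{M_f x, y}| = |\ip{P_{N_L}M_f P_{N_L}x, y}| = |\ip{(P_{N_L}M_f^*|_{N_L})^* P_{N_L}x,\,y}| \le \|P_{N_L}M_f^*|_{N_L}\| \le r,
\]
where the first equality uses $y\in N_L$ and the fact that $M_f^* k \in N_L$ whenever $k\in N_L$ (semi-invariance), and the last inequality is the definition of $r$ together with $\|P_{N_L}x\|\le\|x\|<1$ and $\|y\|<1$.

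The main obstacle I anticipate is the bookkeeping around $N_L = L\ominus\fJ L$ versus $L\ominus\ol{\fJ x}$: one must verify carefully that the cyclic subspace $L=\fA[x]$ has $\ol{\fJ L}=\ol{\fJ x}$ (true since $\fJ\fA\subseteq\fJ$ and $x$ generates $L$) so that the functional's annihilation of $\fJ$ really does place $y$ in $N_L$, and that the compression identities $P_{N_L}M_f P_L = P_{N_L}M_f P_{N_L}$ and $P_{N_L}M_f^*|_{N_L}=(P_{N_L}M_f|_{N_L})^*$ are applied on the correct side — these are exactly the semi-invariance facts recorded after the definition of $N_L$ and $M_L$. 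A secondary point is the limiting argument: property $\bA_1(1)$ gives $\|x\|\,\|y\|<1$ only for $\|\phi\|<1$ strictly, so one obtains $\dist(A,\fJ)\le r$ by letting $\|\phi\|\to 1$; this is routine. Once the reverse inequality is in hand, combining it with Lemma~\ref{L:dist1} gives the stated equality.
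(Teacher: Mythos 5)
Your proposal is correct and follows essentially the same route as the paper's proof: apply $\bA_1(1)$ to a norming functional in $\fJ^\perp$, factor it as $[xy^*]$, pass to the cyclic subspace $L = \fA[x]$, replace $y$ by $P_L y \in L \ominus \ol{\fJ L}$, and invoke semi-invariance of $N_L$ to localize the pairing. The only minor slip is notational: what semi-invariance gives is $P_L M_f^* y \in N_L$ for $y\in N_L$ (not $M_f^* y \in N_L$ itself), but this is exactly what your compression identity $P_{N_L} M_f P_L = P_{N_L} M_f P_{N_L}$ already encodes, so the argument stands.
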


\begin{proof}
By Lemma~\ref{L:dist1}, $\dist(A,\fJ)$ dominates the right hand side.

Conversely, given $A \in \fA$ and $\ep>0$, pick $\phi \in \fA_*$ such that
$\phi|_\fJ = 0$, $\|\phi\| < 1 + \ep$ and $| \phi(A) | = \dist(A,\fJ)$.
Using property $\bA_1(1)$, we obtain vectors $x$ and $y$
with $\|x\|=1$ and $\|y\| < 1 + \ep$ so that $\phi = [xy^*]$.
Set $L = \fA[x]$ in $\CycLat \fA$. Since $L$ is invariant, 
we can and do replace $y$ by $P_L y$ without changing $[xy^*]$.
Since $\phi|_\fJ = 0$, $y$ is orthogonal to $\fJ[x] = \fJ L$.
Hence $y$ belongs to $L \ominus \fJ L$. 
Therefore
\begin{align*}
  \dist(A, \fJ) &= | \ip{Ax,y} | = 
  | \ip{ A P_L x, P_{N_L}y} | \\&=
  | \ip{ P_{L \ominus \fJ L}A P_L x, y}  | =  | \ip{ P_{L \ominus \fJ L} A P_{L \ominus \fJ L} x,y} | \\&\le
  \| P_{L \ominus \fJ L} A P_{L \ominus \fJ L} \| \, \|x\|\, \|y\| \\&< 
  (1+\ep) \| P_{L \ominus \fJ L} A|_{L \ominus \fJ L} \| .
\end{align*}
It follows that the right hand side dominates $\dist(A,\fJ)$.
\end{proof}

We apply this to the context of interpolation in reproducing kernel Hilbert spaces.

\begin{thm} \label{T:A1NP}
Suppose $\fA$ is a dual algebra of multipliers on $H$ with property $\bA_1(1)$.
Let $E$ be a finite subset of $X$ which is separated by $\fA$.
Then the following distance formula holds:
\[
 \dist(f, \fJ_E) =  \sup \{\| P_{N_L} M^*_f |_{N_L} \| :  L \in\CycLat(\fA) \}  .
\]
That is, $\{k^L :  L \in \CycLat(\fA) \}$ is a  
Nevan\-linna-Pick family of kernels for $\fA$.  
\end{thm}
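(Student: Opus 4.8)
The plan is to deduce Theorem~\ref{T:A1NP} as an immediate specialization of Theorem~\ref{T:dist_ideal}. Indeed, since $\fA$ is a dual algebra of multipliers on $H$ with property $\bA_1(1)$, and $\fJ_E$ is a weak-$*$-closed (hence \wot-closed, since $\bA_1(1)$ forces these topologies to agree on $\fA$) ideal of $\fA$, Theorem~\ref{T:dist_ideal} applies verbatim with $\fJ = \fJ_E$ and $A = M_f$. This yields
\[
 \dist(M_f, \fJ_E) = \sup_{L \in \CycLat(\fA)} \| P_{L \ominus \fJ_E L} M_f^*|_{L \ominus \fJ_E L}\| = \sup_{L \in \CycLat(\fA)} \| P_{N_L} M_f^*|_{N_L}\|,
\]
which is precisely the asserted distance formula, once one recalls that $N_L = L \ominus \fJ_E L$ by definition.

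What remains is to translate this distance formula into the statement that $\{k^L : L \in \CycLat(\fA)\}$ is a Nevan\-linna-Pick family of kernels for $\fA$. But this is exactly the content of the Definition preceding Theorem~\ref{T:NPfamily}: the family is a Nevan\-linna-Pick family precisely when the displayed equality holds for every finite subset $E$ of $X$ and every $f \in \fA$. The hypothesis that $E$ is separated by $\fA$ is the standing assumption throughout Section~\ref{S:NPfamilies} that guarantees the kernels $k_{\lambda_i}$ are linearly independent and that the extended kernels $k^L_\lambda$ of Lemma~\ref{L:extend kL} are well-defined, so no extra work is needed there. One may additionally invoke Theorem~\ref{T:NPfamily} to record that this is equivalent to the interpolation formulation in terms of simultaneous positivity of the Pick matrices $\big[(1-w_i\ol{w_j}) k^L(\lambda_i,\lambda_j)\big]_{n\times n}$ over $L \in \CycLat(\fA)$, although this is not logically required for the theorem as stated.

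There is essentially no obstacle: the substantive work was carried out in Lemma~\ref{L:dist1} (which supplies the inequality $\ge$, reduces to cyclic subspaces, and identifies $N_L$) and in Theorem~\ref{T:dist_ideal} (which supplies the reverse inequality via the $\bA_1(1)$ factorization $\phi = [xy^*]$, the choice $L = \fA[x]$, and the observation that $\phi|_{\fJ_E} = 0$ forces $y \perp \fJ_E L$). The only point worth a sentence of care is the passage from ``$\fJ_E$ is weak-$*$-closed'' to ``$\fJ_E$ is \wot-closed,'' which follows since property $\bA_1(1)$ makes the weak-$*$ and weak operator topologies coincide on $\fA$; alternatively, one notes that Theorem~\ref{T:dist_ideal} is stated for \wot-closed ideals but its proof only uses weak-$*$ continuity of $\phi$ and weak-$*$ closedness of $\fJ$, so the hypothesis is harmless here. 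Thus the proof is a two-line citation followed by an appeal to the relevant definition.
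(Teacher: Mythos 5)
Your proposal is correct and follows precisely the route the paper takes: the paper states Theorem~\ref{T:A1NP} immediately after Theorem~\ref{T:dist_ideal} with no separate proof, signaled only by the sentence ``We apply this to the context of interpolation in reproducing kernel Hilbert spaces,'' so the intended argument is exactly the specialization $\fJ = \fJ_E$, $A = M_f$. Your added remark reconciling the \wot-closed hypothesis in Theorem~\ref{T:dist_ideal} with the weak-$*$-closed $\fJ_E$ via $\bA_1(1)$ is a correct and welcome bit of care that the paper leaves implicit.
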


\begin{cor} \label{C:A1NP}
Suppose $\fA$ is a dual algebra of multipliers on a reproducing 
kernel Hilbert space $H$ on $X$ that has property $\bA_1(1)$.  
Let  $\{\lambda_1,...,\lambda_n\}$ be distinct points in $X$ 
separated by $\fA$ and let $\{w_1,...,w_n\}$ be complex numbers. 
There is a multiplier $f$ in the unit ball of $\fA$ such that 
$f(\lambda_i) = w_i$ for each $i$ if and only if 
\begin{align*}
 \Big[ (1 - w_i\ol{w_j})k^L(\lambda_i,\lambda_j) \Big] \geq 0
 \qforal L \in \CycLat(\fA).
\end{align*}
\end{cor}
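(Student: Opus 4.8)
The plan is to deduce this directly from Theorem~\ref{T:A1NP} (equivalently, Theorem~\ref{T:dist_ideal} applied to the ideal $\fJ_E$) together with the now-familiar reproducing kernel computation already carried out in the proof of Theorem~\ref{T:NPfamily}. First I would observe that the existence of a contractive interpolant $f \in \fA$ with $f(\lambda_i)=w_i$ is equivalent to the statement $\dist(p,\fJ_E)\le 1$ for any fixed interpolant $p=\sum_{i=1}^n w_i p_i \in \fA$ (using the idempotents $p_i$ furnished by the separation hypothesis): indeed, since $\fJ_E$ is weak-$*$-closed the distance is attained at some $g\in\fJ_E$, and then $f:=p-g$ is the desired contraction, while conversely any such $f$ gives $\dist(p,\fJ_E)\le\|p-f\|=\|g\|\le 1$ since $p-f\in\fJ_E$.

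Next I would invoke the distance formula of Theorem~\ref{T:A1NP}, which applies precisely because $\fA$ has property $\bA_1(1)$ and $E$ is separated by $\fA$, to rewrite
\[
 \dist(p,\fJ_E)=\sup_{L\in\CycLat(\fA)}\|P_{N_L}M_p^*|_{N_L}\|,
\]
where $N_L = L\ominus\fJ_E L$. So the interpolation condition becomes: $\|P_{N_L}M_p^*|_{N_L}\|\le 1$ for every $L\in\CycLat(\fA)$. The final step is to translate each of these norm inequalities into positivity of a Pick matrix. For a fixed $L$, by semi-invariance of $N_L$ the inequality $\|P_{N_L}M_p^*|_{N_L}\|\le 1$ is equivalent to $I-P_{N_L}M_pP_{N_L}M_p^*P_{N_L}\ge 0$ on $N_L$. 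Since the nonzero vectors among $\{k^L_{\lambda_i}:1\le i\le n\}$ form a basis for $N_L$ (by the lemma preceding Corollary~\ref{C:cyclic ML}), and since $M_p^*k^L_{\lambda_i}=\ol{p(\lambda_i)}\,k^L_{\lambda_i}=\ol{w_i}\,k^L_{\lambda_i}$ by the fundamental relation of Lemma~\ref{L:extend kL}, this positivity is equivalent to
\[
 0\le\Big[\ip{(I-P_{N_L}M_pP_{N_L}M_p^*P_{N_L})k^L_{\lambda_j},k^L_{\lambda_i}}\Big]
   =\Big[(1-w_i\ol{w_j})\,k^L(\lambda_i,\lambda_j)\Big],
\]
exactly as in the computation displayed in the proof of Theorem~\ref{T:NPfamily} (any index $i$ with $k^L_{\lambda_i}=0$ contributes a zero row and column, which is harmless for positivity). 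Running this equivalence over all $L\in\CycLat(\fA)$ gives the stated criterion.

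I do not expect a genuine obstacle here: the corollary is essentially a repackaging of Theorem~\ref{T:A1NP} in Pick-matrix language, and every ingredient — the reduction to a distance estimate, the attainment of the distance in the weak-$*$-closed ideal, the basis property of the extended kernels, and the norm-to-positivity translation — has already been established earlier in the paper. The only point requiring a word of care is the bookkeeping when some $k^L_{\lambda_i}$ vanishes (i.e. when $\fJ_{\lambda_i}[h]=\fA[h]$), but as noted this only deletes rows and columns and so does not affect the positivity condition. One could alternatively cite Theorem~\ref{T:NPfamily} directly once Theorem~\ref{T:A1NP} is known, since that theorem already packages the equivalence between the NP-family distance formula and the Pick-matrix interpolation criterion.
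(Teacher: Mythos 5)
Your proposal is correct and takes essentially the same route as the paper: the corollary is an immediate consequence of Theorem~\ref{T:A1NP} (which asserts the distance formula, i.e.\ that $\{k^L: L\in\CycLat(\fA)\}$ is an NP family) together with Theorem~\ref{T:NPfamily} (which translates the NP-family distance formula into the Pick-matrix criterion). You simply unwind the content of Theorem~\ref{T:NPfamily} inline rather than citing it, and your remark about zero rows and columns when $k^L_{\lambda_i}=0$ is a correct and harmless bookkeeping point.
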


By replacing the property $\bA_1(1)$ with $\bA_1(r)$ for $r > 1$, 
one may repeat the proof of the previous theorem and obtain an 
analogous result where the norm of the interpolant is bounded by $r$.

In applications, is it convenient to use the spaces $M_L$ instead of $N_L$.
In light of Corollary~\ref{C:cyclic ML}, this is possible for cyclic
subspaces $L=\fA[h]$ provided that $h$ does not vanish on $E$.
A refinement of the $\bA_1(1)$ property can make this possible.
The distinction made in this theorem is that the kernel is formed
as simply the compressions $P_L k_{\lambda}$.

\begin{thm} \label{T:A1ML}
Let $\fA$ be a dual algebra of multipliers on $H$,
and let $E= \{\lambda_1,...,\lambda_n\}$ be a finite 
subset of $X$ which is separated by $\fA$.
Suppose that $\fA$ has property $\bA_1(1)$ with the additional 
stipulation that each weak-$*$ continuous functional $\phi$ 
on $\fA$  with $\|\phi\|<1$ can be factored as $\phi = [xy^*]$ with 
$\|x\|\,\|y\| <1$ such that $x$ does not vanish on $E$.  
Then there is a multiplier in the unit ball of $\fA$ with $f(\lambda_i) = w_i$
for $\lambda_i \in E$ if and only if
\begin{align*}
 \Big[ (1 - w_i\ol{w_j}) \ip{P_L k_{\lambda_j},k_{\lambda_i}} \Big] \geq 0
\end{align*}
for all cyclic subspaces $L= \fA[h]$ where $h$ does not vanish on $E$.
\end{thm}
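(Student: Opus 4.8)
The plan is to follow the proof strategy of Theorem~\ref{T:A1NP} verbatim, merely keeping track of the extra hypothesis on the factoring vector $x$. First I would reduce, exactly as in Theorem~\ref{T:NPfamily} and Corollary~\ref{C:A1NP}, the interpolation statement to a distance formula: given scalars $w_i$, pick any interpolant $p\in\fA$ with $p(\lambda_i)=w_i$ (say $p=\sum w_i p_i$), and observe that the positivity of the displayed Pick matrices for all cyclic $L=\fA[h]$ with $h|_E\neq 0$ is equivalent, by the same eigenvector computation using $M_p^* k^L_{\lambda_i}=\ol{w_i}k^L_{\lambda_i}$ and Corollary~\ref{C:cyclic ML} (which identifies $M_L=N_L$ and $k^L_\lambda=P_Lk_\lambda$ in this case), to the statement that $\|P_{M_L}M_p^*|_{M_L}\|\le 1$ for every such $L$. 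So it suffices to prove the distance formula
\[
 \dist(p,\fJ_E)=\sup\bigl\{\|P_{M_L}M_p^*|_{M_L}\|: L=\fA[h],\ h|_E\neq 0\bigr\}
\]
and then, since $\fJ_E$ is weak-$*$ closed, write $f=p-g$ with $g\in\fJ_E$ attaining the distance; $f$ lies in the unit ball and interpolates the data.

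The key step is the $\ge$ direction of that distance formula (the $\le$ direction is Lemma~\ref{L:dist1}). Given $\ep>0$, choose $\phi\in\fA_*$ with $\phi|_{\fJ_E}=0$, $\|\phi\|<1+\ep$, and $|\phi(p)|=\dist(p,\fJ_E)$. Normalizing, $\tfrac{1}{1+\ep}\phi$ has norm $<1$, so by the strengthened $\bA_1(1)$ hypothesis we may factor it as $[xy^*]$ with $\|x\|\,\|y\|<1$ and $x$ not vanishing on $E$; rescaling gives $\phi=[xy^*]$ with $\|x\|=1$, $\|y\|<1+\ep$, and still $x|_E\neq 0$. Set $L=\fA[x]\in\CycLat(\fA)$; because $x$ does not vanish on $E$, this is precisely a cyclic subspace of the allowed type, and Corollary~\ref{C:cyclic ML} gives $M_L=N_L$. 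As in Theorem~\ref{T:dist_ideal}, replacing $y$ by $P_Ly$ does not change $[xy^*]$, and $\phi|_{\fJ_E}=0$ forces $y\perp\fJ_E[x]=\fJ_E L$, so $y\in L\ominus\fJ_E L=N_L=M_L$. Also $x\in L$; and since $x$ does not vanish on $E$, one checks that $P_{M_L}x$ still implements the functional, i.e. $x$ may be replaced by $P_{M_L}x$ without changing $\phi(p)=\ip{M_px,y}$ (here $M_L$ is semi-invariant and $y\in M_L$). Then
\begin{align*}
 \dist(p,\fJ_E)=|\ip{M_px,y}|
 &=|\ip{P_{M_L}M_pP_{M_L}x,\,y}|\\
 &\le \|P_{M_L}M_pP_{M_L}\|\,\|x\|\,\|y\|
 < (1+\ep)\,\|P_{M_L}M_p^*|_{M_L}\|,
\end{align*}
and letting $\ep\to 0$ yields the claim.

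The main obstacle is the one genuinely new bookkeeping point: verifying that when $x|_E\neq 0$ the vector $x$ can be replaced by its compression $P_{M_L}x$ without disturbing the value $\phi(p)$. This is where the hypothesis ``$x$ does not vanish on $E$'' is actually used — it guarantees, via Corollary~\ref{C:cyclic ML}, that $M_L=N_L$ and that $M_L$ carries the full reproducing kernel $\{P_Lk_{\lambda_i}:\lambda_i\in E\}$, so that compressing $x$ into $M_L$ loses nothing relevant to evaluation against the co-invariant space $M(E)$ and against $y\in M_L$. Everything else is a routine transcription of the argument for Theorem~\ref{T:dist_ideal} and Theorem~\ref{T:A1NP}, and the final remark about $\bA_1(r)$ giving an interpolant of norm $\le r$ applies here unchanged.
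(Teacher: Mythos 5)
Your proof is correct and is essentially the argument the paper intends (the paper does not spell out a proof of this theorem, but your transcription of the proof of Theorem~\ref{T:dist_ideal} with the extra non-vanishing condition tracked through is exactly what the surrounding text indicates).

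One expository point worth flagging: the framing of the ``main obstacle'' as justifying the replacement of $x$ by $P_{M_L}x$ slightly misallocates where the new hypothesis does its work. The hypothesis ``$x$ does not vanish on $E$'' is used once, to invoke Corollary~\ref{C:cyclic ML} and conclude $M_L=N_L$; once that is in hand, $y\in N_L=M_L$ and the identity $\ip{M_p x,y}=\ip{P_{M_L}M_p P_{M_L}x,y}$ follows from semi-invariance exactly as in Theorem~\ref{T:dist_ideal} (indeed, with $x\in L$ and $y\in M_L$, the chain $\ip{M_px,y}=\ip{P_{M_L}M_pP_Lx,y}=\ip{P_{M_L}M_pP_{M_L}x,y}$ only needs that $M_p$ maps $L\cap M(E)^\perp$ into itself, which holds for any $L\in\Lat\fA$, not just those of the special form). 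So there is no genuinely new verification at that step; it is automatic once $M_L=N_L$. Also, the displayed eigenvector relation should read $P_LM_p^*k^L_{\lambda_i}=\ol{w_i}k^L_{\lambda_i}$ rather than $M_p^*k^L_{\lambda_i}=\ol{w_i}k^L_{\lambda_i}$, since $M_p^*P_Lk_\lambda$ need not lie in $L$; this is harmless for the norm computation but worth stating precisely. Neither remark affects the validity of the argument.
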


\section{Subalgebras of $\Hinf$} \label{S:single}

In this section, we discuss the algebra $\Hinf$ and its subalgebras.
In particular, we show how to recover results of Davidson, Paulsen, Raghupathi
and Singh \cite{DPRS09} and Raghupathi \cite{Rag09a}.
We also show how one can formulate a Nevan\-linna-Pick theorem for $\Hinf$
on Bergman space.  While this is only of theoretical interest,
since interpolation is better done on the Hardy space, this has been
an open question for some time.

\subsection*{Hardy space}
Let $H = H^2$ be Hardy space, so that $\fM(H)=\Hinf$.
It is well known that this algebra has property $\bA_1(1)$  \cite[Theorem 3.7]{BFP83}.

Moreover, any weak-$*$ continuous functional $\phi$ on $\Hinf$
can be factored exactly as $\phi = [xy^*]$ with $\|x\|\,\|y\| = \|\phi\|$.
Now if the inner-outer factorization of $x$ is $x=\omega h$, with $h$ outer
and $\omega$ inner, then $\phi = [h k^*]$ where $k = M_\omega^*y$.
Thus the factorization may be chosen so that $h$  is outer,
and in particular, does not vanish on $\bD$.

Consider the classical Nevan\-linna-Pick problem for a finite
subset $E$ of $\bD$.
Following Sarason's approach \cite{Sar67},  let  $B_E$ denoting the 
finite Blaschke product with simple zeros at $E$.
Then $\fJ_E = B_E\Hinf$.
Beurling's invariant subspace theorem for the unilateral shift 
shows that if $L \in \Lat(\Hinf)$, then there is some inner
function $\omega$ so that $L = \omega H^2$.  
In particular, every invariant subspace is cyclic.
Observe that 
\[
 N_\omega := \omega H^2 \ominus \omega B_E H^2 
 = \omega (H^2 \ominus B_E H^2) =\omega  M(E).
\]
Here $M(E) = \spn\{k_\lambda : \lambda \in E\} = H^2 \ominus B_EH^2$.
However $M_\omega$ is an isometry in $\fM(H^2)$,
and hence commutes with $\fM(H^2)$.  So it provides a unitary equivalence
between the compression of $\fM(H^2)$ to $N_\omega$ 
and to $M(E)$.
  
Therefore, for $f \in \Hinf$, Theorem~\ref{T:A1NP} gives
\begin{align*}
 \dist (f,B_E\Hinf) &= 
 \sup \{\| P_{N_\omega} M_f^* |_{N_\omega} \| : \omega \text{ inner} \} = 
 \| M_f^*|_{M(E)} \|.
\end{align*}
The classical Nevan\-linna-Pick interpolation theorem now follows
as in Corollary~\ref{C:A1NP}, as Sarason showed.
\smallbreak

Perhaps more importantly, Theorem~\ref{T:A1NP} provides an interpolation 
theorem for \emph{any} weak-$*$-closed subalgebra of $\Hinf$.  
These constrained Nevan\-linna-Pick theorems fashion suitable 
generalizations of the results seen in \cite{DPRS09} and \cite{Rag09a}. 

In fact, since any weak-$*$ linear functional $\phi$ on $\fA$ extends 
to a functional on $\Hinf$ with arbitrarily small increase in norm, 
it can be factored as $\phi = [hk^*]$ 
where $h$ is outer and $\|h\|\,\|k\| < \|\phi\| + \ep$.
Therefore the more refined Theorem~\ref{T:A1ML} applies.
We obtain: 

\begin{thm} \label{T:subHinf}
Let $\fA$ be a weak-$*$-closed unital subalgebra of $\Hinf$.
Let $E = \{ \lambda_1,\dots,\lambda_n \}$ be a finite subset 
of $\bD$ which is separated by $\fA$.
Then there is a multiplier in the unit ball of $\fA$ with $f(\lambda_i) = w_i$
for $1 \le i \le n$ if and only if
\begin{align*}
 \Big[ (1 - w_i\ol{w_j}) \ip{P_L k_{\lambda_i},k_{\lambda_j}} \Big] \geq 0
\end{align*}
for all cyclic subspaces $L= \fA[h]$ where $h$ is outer.
\end{thm}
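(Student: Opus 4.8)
The plan is to derive Theorem~\ref{T:subHinf} as a direct specialization of Theorem~\ref{T:A1ML}, so the work reduces to checking that the hypotheses of the latter are met with the stronger, geometric conclusion that $M_L = N_L$ for the relevant cyclic subspaces. First I would recall, as noted in the preceding discussion, that $\Hinf$ has property $\bA_1(1)$ \cite[Theorem 3.7]{BFP83}, and in fact every weak-$*$ continuous functional on $\Hinf$ factors \emph{exactly} as $\phi = [xy^*]$ with $\|x\|\,\|y\|=\|\phi\|$. The key refinement is the inner-outer trick already displayed above: writing $x = \omega h$ with $h$ outer and $\omega$ inner, one has $\phi = [hk^*]$ with $k = M_\omega^* y$ and $\|h\|\,\|k\| \le \|x\|\,\|y\| = \|\phi\|$, so $h$ does not vanish on $\bD$, hence not on $E$.

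Next I would handle the passage from $\fA = \Hinf$ to an arbitrary weak-$*$-closed unital subalgebra $\fA \subseteq \Hinf$. Here the point is that a weak-$*$ continuous linear functional $\phi$ on $\fA$ with $\|\phi\|<1$ extends, by Hahn-Banach applied in the predual $L^1/H^1_0$ (equivalently, since $\fA_*$ is a quotient of $(\Hinf)_*$), to a weak-$*$ continuous functional $\td\phi$ on $\Hinf$ with $\|\td\phi\| < 1$; more carefully, given $\ep>0$ we may take $\|\td\phi\| < \|\phi\| + \ep < 1$ after shrinking. Factor $\td\phi = [hk^*]$ on $\Hinf$ with $h$ outer and $\|h\|\,\|k\| = \|\td\phi\| < 1$. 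Restricting back to $\fA$, we get $\phi = [hk^*]|_\fA$ with $h$ outer (in particular non-vanishing on $E$) and $\|h\|\,\|k\|<1$. This verifies precisely the hypothesis of Theorem~\ref{T:A1ML}: every weak-$*$ continuous $\phi$ on $\fA$ with $\|\phi\|<1$ factors as $[xy^*]$ with $\|x\|\,\|y\|<1$ and $x = h$ not vanishing on $E$.

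With the hypotheses of Theorem~\ref{T:A1ML} in hand, I would simply quote its conclusion: there is a multiplier in the unit ball of $\fA$ with $f(\lambda_i)=w_i$ for all $i$ if and only if the Pick matrices $\big[(1-w_i\ol{w_j})\ip{P_L k_{\lambda_i},k_{\lambda_j}}\big]$ are positive for all cyclic $L = \fA[h]$ with $h$ not vanishing on $E$. Since every such $L$ is of the form $\fA[h]$ with $h$ outer in the factorizations produced above, and conversely outer $h$ never vanish on $\bD \supseteq E$, one may restrict attention to $h$ outer; by Corollary~\ref{C:cyclic ML}, for these subspaces $M_L = N_L$ so the compressed kernel is exactly $P_L k_\lambda$, matching the displayed matrix. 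This is exactly the assertion of Theorem~\ref{T:subHinf}.

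The only genuine subtlety — and the step I would be most careful about — is the functional extension argument: one must know that $\fA_*$ is naturally a quotient of $(\Hinf)_*$ so that weak-$*$ continuous functionals on $\fA$ lift (with arbitrarily small norm increase) to weak-$*$ continuous functionals on $\Hinf$. This holds because $\fA$ is weak-$*$ closed in $\Hinf$, so $\fA = (\fA_\perp)^\perp$ and $\fA_* \cong (\Hinf)_*/\fA_\perp$ isometrically; a preannihilator representative of norm $<\|\phi\|+\ep$ always exists. Everything else is a matter of assembling cited results, and no further estimates are needed.
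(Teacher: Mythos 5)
Your proposal is correct and follows precisely the paper's own route: extend the functional from $\fA$ to $\Hinf$ with arbitrarily small norm increase using the quotient structure $\fA_* \cong (\Hinf)_*/\fA_\perp$, invoke the exact $\bA_1(1)$ factorization on $\Hinf$ together with the inner-outer decomposition to get a representative $[hk^*]$ with $h$ outer, and then apply Theorem~\ref{T:A1ML} together with Corollary~\ref{C:cyclic ML} to identify $M_L$ with $N_L$. The paper compresses all of this into one sentence before stating the theorem, but the content is identical to what you wrote.
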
 

In \cite{DPRS09}, the algebra $\Hinf_1 = \{f\in \Hinf : f'(0)=0\}$ is studied. 
Beurling's Theorem was used to show that there is a simple 
parameterization of the cyclic invariant subspaces $\Hinf_1[h]$
for $h$ outer, namely 
\[
 H^2_{\alpha, \beta} := \spn \{ \alpha + \beta z,\ z^2H^2 \} 
 \qforal (\alpha,\beta) \text{ with }|\alpha|^2 + |\beta|^2 =1.
\]
It was shown that these provide a Nevan\-linna-Pick family for $\Hinf_1$,
and the consequent interpolation theorem.
Raghupathi \cite{Rag09a} carries out this program for the class of
algebras $\Hinf_B = \bC 1 + B \Hinf$ where $B$ is a finite Blaschke product.

In \cite{Rag09b}, Raghupathi shows that Abrahamse's
interpolation result for multiply connected domains \cite{Abram79} 
is equivalent to the interpolation problem for  certain
weak-$*$ subalgebras of $\Hinf$.  Consequently, 
our distance formula includes Abrahamse's theorem
as a special case.

\subsection*{Singly-generated multiplier algebras}
Suppose that $T$ is an absolutely continuous contraction on $H$, 
and let $\fA_T$ be the unital, weak-$*$-closed algebra generated by $T$.  
The Sz.Nagy dilation theory provides a weak-$*$ continuous, 
contractive homomorphism 
$\Phi : \Hinf \to \fA_T$  given by $\Phi(f) = P_H f(U)|_H$, 
where $U$ is the minimal unitary dilation of $T$.  
If $\Phi$ is isometric (i.e.\ $\| \Phi(f) \| = \| f \|_\infty$ for every $f \in \Hinf$), 
then $\Phi$ is also a weak-$*$ homeomorphism.  
In addition, $\Phi$ being isometric ensures that the
preduals $(\fA_T)_*$ and $L_1 / H_0^1$ are isometrically isomorphic. 
In this case, we say that $T$ has an \emph{isometric functional calculus}.  
See \cite{BFP85} for relevant details.  
The following deep result of Bercovici \cite{Berco88} will be used.

\begin{thm} [Bercovici] \label{T:Berc}
Suppose $T$ is an absolutely continuous contraction on $H$ 
and that $T$ has an isometric functional calculus.  
Then $\fA_T$ has property $\bA_1(1)$.
\end{thm}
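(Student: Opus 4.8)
This is a deep theorem of Bercovici \cite{Berco88}, in the circle of ideas of \cite{BFP85}, so the plan is only to indicate the architecture one would follow. Write $Q_T := (\fA_T)_*$ for the predual. First I would reformulate the statement through the functional calculus. Since $T$ is absolutely continuous, its minimal unitary dilation $U$ has absolutely continuous spectral measure, so $f \mapsto f(U)$ is a weak-$*$ continuous $\Hinf$-functional calculus on the dilation space and $\fA_T = \{\, P_H f(U)|_H : f \in \Hinf \,\}$; the hypothesis that $\Phi(f) = P_H f(U)|_H$ is isometric makes $\Phi$ a weak-$*$ homeomorphism and identifies $Q_T$ isometrically with the predual of $\Hinf$, namely $L^1(\bT)/H^1_0$. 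Under this dictionary the vector functional $[xy^*]$ corresponds to the $L^1(\bT)$-class of the boundary function $\sum_{n\in\bZ} \ip{U^n x, y}\, e^{in\theta}$, and $\|\phi\|$ for $\phi \in Q_T$ is the distance in $L^1(\bT)$ of any representative to $H^1_0$. So $\bA_1(1)$ for $\fA_T$ amounts to: every $\phi$ with $\|\phi\| < 1$ is realized as such a functional $[xy^*]$ with $\|x\|\,\|y\| < 1$.

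Granting this dictionary, I would obtain the factorization by the standard iteration of dual algebra theory, driven by an approximation property of the type called $X_{0,1}$ (the strengthening of $\bA_1(1)$ mentioned in the introduction for $\fL_d$). Fix $\phi$ with $\|\phi\| < \rho < 1$ and a summable sequence $\ep_k > 0$. Finite-rank functionals $\sum_{i=1}^{m} [x_i y_i^*]$ are norm dense in $Q_T$, so I would approximate $\phi$ within $\ep_0$ by such a sum and then apply $X_{0,1}$ repeatedly: at each stage one uses a norm-bounded weakly null sequence of ``residual'' vectors to absorb the current error functional into a single pair $(x_1, y_1)$, with the quadratic bookkeeping $\|x_1\|^2 \approx \|x\|^2 + (\text{small})$ and $\|y_1\|^2 \approx \|y\|^2 + (\text{small})$ that keeps the running product $\|x\|\,\|y\|$ below $\rho$. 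The vectors produced this way form Cauchy sequences, and their limits satisfy $\phi = [xy^*]$ exactly with $\|x\|\,\|y\| \le \rho < 1$; in fact this argument yields the stronger property $\bA_{\aleph_0}(1)$.

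The real content --- and the step I expect to be the main obstacle --- is verifying the $X_{0,1}$-type approximation for $T$ of this class, i.e.\ the sharp absorption lemma: given a rank-one functional $[xy^*]$, a small functional $\psi$ (which, after one more approximation, we may take to be a vector functional $[uv^*]$) with $\|\psi\| < \delta$, and a norm-bounded weakly null sequence, one must produce $x_1, y_1$ with $[x_1 y_1^*] = [xy^*] + \psi + \psi'$, where $\|\psi'\| = o(\delta)$, together with the quadratic norm control above. Here one leans on the Hardy-space structure of $Q_T \cong L^1(\bT)/H^1_0$ --- the factorization $H^1 = H^2 \cdot H^2$, inner-outer factorization, and the Szeg\"{o} and F.~and~M.~Riesz theorems --- to split a predual representative into two $H^2$ factors and realize them as honest vectors in the dilation space; and one uses the absolute continuity of $U$ in an essential way both to manufacture the needed weakly null perturbations and to force the cross-pairings $\ip{(\,\cdot\,)x, v}$ and $\ip{(\,\cdot\,)u, y}$ to vanish in the limit, which is what makes the absorption simultaneously exact and norm-controlled. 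It is this quantitatively sharp estimate, not any single conceptual step, that is delicate; it is the heart of Bercovici's argument, and the reason we simply invoke \cite{Berco88}.
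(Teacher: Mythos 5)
The paper supplies no proof of this theorem: it is stated and attributed to Bercovici \cite{Berco88}, then used as a black box, which is exactly what your proposal does. Your architectural sketch (identifying the predual with $L^1(\bT)/H^1_0$ via the isometric functional calculus, then running an iterative absorption argument in the style of property $X_{0,1}$) is a reasonable reconstruction of the circle of ideas in \cite{Berco88} and \cite{BFP85}, but since the paper itself treats the result purely as a citation there is nothing more specific to compare it against.
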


We will use Theorem~\ref{T:Berc} to show that a wide class of reproducing 
kernel Hilbert spaces admit Nevan\-linna-Pick families of kernels 
for arbitrary dual subalgebras of $\Hinf$.  

\begin{eg}
Let $\mu$ be a finite Borel measure on $\bD$, and
let $P^2(\mu)$ denote the closure of the polynomials in $L^2(\mu)$.
A \emph{bounded point evaluation} for $P^2(\mu)$ 
is a point $\lambda$ for which 
there exists a constant $M > 0$ with  
$| p(\lambda) | \leq M \| p \|_{P^2(\mu)}$ 
for every polynomial $p$. 
A point $\lambda$ is said to be an \emph{analytic bounded point 
evaluation for $P^2(\mu)$} if $\lambda$ is in the interior of the set 
of bounded point evaluatons, and that the map $z \to f(z)$
is analytic on a neighborhood of $\lambda$ for every $f \in P^2(\mu)$.

It follows that if $\lambda$ is a bounded point evaluation, 
then there is a kernel function $k_\lambda$ in $P^2(\mu)$ 
so that $p(\lambda) = \ip{ p, k_\lambda }$.  
For an arbitrary $f \in P^2(\mu)$, if we set $f(\lambda) = \ip{ f,k_\lambda}$, 
then these values will agree with $f$ a.e.\  with respect to $\mu$.  
For both the Hardy space and Bergman space, the set of analytic 
bounded point evaluation is all of $\bD$.  
A theorem of Thomson shows that either $P^2(\mu) = L^2(\mu)$ 
or $P^2(\mu)$ has analytic bounded point evaluations \cite{Tho91}.

Let $m$ be Lebesque area measure on the disk. 
For $s > 0$, define a weighted area measure on $\bD$ by 
$d\mu_s(z) = (1 - | z | )^{s-1}dm(z)$.  
The monomials $z^n$ form an orthogonal basis for $P^2(\mu)$. 
This includes the Bergman space for $s=1$.  
For these spaces, every point in $\bD$ is an analytic point evaluation.
\end{eg}

The following result appears as Theorem 4.6 in \cite{AMc02}.

\begin{thm} \label{T:bpe}
Let $\mu$ be a measure on $\bD$ such that  the set of analytic bounded 
point evaluations of $P^2(\mu)$ contains all of $\bD$. 
Then $\fM(P^2(\mu))$ is isometrically isomorphic and weak-$*$ homeomorphic to $\Hinf$.
\end{thm}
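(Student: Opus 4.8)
The plan is to identify the multiplier algebra $\fM(P^2(\mu))$ with the algebra of bounded analytic functions on $\bD$, endowed with the supremum norm, and to leverage the fact that both the norm and the weak-$*$ structure are pinned down by the collection of point evaluations. First I would observe that, since every point of $\bD$ is an analytic bounded point evaluation, each $f \in P^2(\mu)$ has a well-defined analytic representative on $\bD$, and the map $f \mapsto f$ (as an analytic function) is injective because two functions in $P^2(\mu)$ agreeing on all of $\bD$ agree $\mu$-a.e. Thus $P^2(\mu)$ embeds, as a reproducing kernel Hilbert space, into $\mathrm{Hol}(\bD)$, with kernel $k_\lambda$ at each $\lambda \in \bD$. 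Any multiplier $\phi \in \fM(P^2(\mu))$ then acts by pointwise multiplication on analytic functions; since the constant function $\one$ lies in $P^2(\mu)$, $\phi = \phi \cdot \one$ is itself analytic on $\bD$, so $\fM(P^2(\mu)) \subseteq \mathrm{Hol}(\bD)$.

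Next I would establish the norm identity $\|M_\phi\| = \|\phi\|_\infty$. The inequality $\|\phi\|_\infty \le \|M_\phi\|$ comes from the fundamental eigenvalue relation $M_\phi^* k_\lambda = \overline{\phi(\lambda)} k_\lambda$: each $|\phi(\lambda)|$ is an eigenvalue of $M_\phi^*$, hence bounded by $\|M_\phi^*\| = \|M_\phi\|$. For the reverse inequality one uses that $\Dbar$ is the closed support structure: the standard argument is that $\|M_\phi\|<1$ is equivalent to positivity of the kernel $(1 - \phi(z)\overline{\phi(w)})k(z,w)$, which (since $k$ is a genuine positive kernel and the $k_\lambda$ are total in $P^2(\mu)$) forces $|\phi(z)| \le 1$ pointwise; conversely an analytic $\phi$ with $\|\phi\|_\infty \le 1$ multiplies $P^2(\mu)$ contractively because one can approximate by dilates $\phi_r(z) = \phi(rz)$ and check that multiplication by a bounded analytic function does not increase the $L^2(\mu)$-norm of polynomials — here the boundedness of composition with dilations of the disk and the density of polynomials are used, exactly as in the remark following Example~\ref{E:Bergman1}. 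This identifies $\fM(P^2(\mu))$ isometrically with a subalgebra of $\Hinf$.

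To see the identification is onto all of $\Hinf$, note that for $\phi \in \Hinf$ and a polynomial $p$, the products $\phi_r p$ lie in $P^2(\mu)$ with $\|\phi_r p\|_{P^2(\mu)} \le \|\phi\|_\infty \|p\|_{P^2(\mu)}$; letting $r \to 1$ and using that polynomials are dense in $P^2(\mu)$ shows $\phi \cdot h \in P^2(\mu)$ for all $h$, so $\phi \in \fM(P^2(\mu))$. Finally, for the weak-$*$ statement: $\fM(P^2(\mu))$ is weak-$*$ closed in $\B(P^2(\mu))$ (it is even WOT-closed, being the commutant-type algebra fixing each $\bC k_\lambda$), hence has a predual $\fM(P^2(\mu))_*$, and the isometric algebra isomorphism $\Phi \colon \Hinf \to \fM(P^2(\mu))$ intertwines the respective weak-$*$ topologies once one checks that $\Phi$ and $\Phi^{-1}$ are weak-$*$ continuous. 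For weak-$*$ continuity of $\Phi$ it suffices to check continuity on bounded sets, where the weak-$*$ topology on $\Hinf$ coincides with pointwise convergence on $\bD$ (since $\Hinf_* = L^1/H^1_0$ and point evaluations are weak-$*$ continuous and total on bounded sets); a bounded net $\phi_i \to \phi$ pointwise gives $M_{\phi_i} \to M_\phi$ in WOT on $P^2(\mu)$ by the eigenvector relation, and an isometric algebra isomorphism between dual operator algebras that is weak-$*$ continuous on bounded sets is automatically a weak-$*$ homeomorphism by the Krein--Smulian theorem. The main obstacle, and the only place genuine analysis enters, is the surjectivity-plus-isometry step $\fM(P^2(\mu)) = \Hinf$ with equal norms: everything else is soft functional analysis, but that step requires the hypothesis that the analytic bounded point evaluations fill out all of $\bD$ together with the dilation/density properties of $P^2(\mu)$, and is where one would, in a complete proof, invoke the detailed structure theory (this is precisely why the result is attributed to \cite{AMc02}).
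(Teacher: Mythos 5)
The paper does not prove this statement; it introduces it with ``The following result appears as Theorem 4.6 in \cite{AMc02}'' and states it without argument, so there is no in-paper proof to compare yours against. Your sketch is nevertheless a reasonable reconstruction of how such a proof goes: the eigenvector relation $M_\phi^* k_\lambda = \ol{\phi(\lambda)}\,k_\lambda$ gives the inclusion $\fM(P^2(\mu)) \subseteq \Hinf(\bD)$ together with $\|\phi\|_\infty \le \|M_\phi\|$; the dilation trick $\phi_r(z)=\phi(rz)$, uniform polynomial approximation of $\phi_r$ on $\Dbar$, the bound $|\phi|\le\|\phi\|_\infty$ on $\bD$, and density of polynomials in $P^2(\mu)$ give the reverse inclusion and $\|M_\phi\|\le\|\phi\|_\infty$; and Krein--Smulian upgrades pointwise convergence on bounded sets to a weak-$*$ homeomorphism, using that the unit ball of $\Hinf$ is weak-$*$ compact and the weak-$*$ topology there is bounded pointwise convergence.

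Two quibbles. The clause in your second paragraph about positivity of the kernel $(1-\phi(z)\ol{\phi(w)})k(z,w)$ is a detour: it merely repackages the eigenvalue bound you already gave, and it smuggles in totality of $\{k_\lambda : \lambda\in\bD\}$ in $P^2(\mu)$ at a point where you have not established it. Totality does follow from the fact (stated in the paper's discussion of bounded point evaluations, and implicitly used in your first paragraph) that $\lambda \mapsto \ip{f,k_\lambda}$ recovers $f$ $\mu$-a.e.; but you should make that dependence explicit or, better, avoid it: in the weak-$*$ step it is cleaner to verify $\ip{M_{\phi_i}p,q}\to\ip{M_\phi p,q}$ directly for polynomials $p,q$ by dominated convergence in $L^2(\mu)$, which uses only polynomial density. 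Finally, the closing hedge undersells what you did: once one grants the basic facts about analytic bounded point evaluations (which the theorem's hypotheses supply), your outline is essentially complete. The paper defers to \cite{AMc02} as a matter of economy, not because there is a hidden layer of structure theory you failed to reach.
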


Corollary~\ref{C:A1NP} yields the following interpolation result for these spaces.

\begin{thm} \label{T:bpeNP}
Let $\mu$ be a measure on $\bD$ such that  the set of analytic bounded 
point evaluations of $P^2(\mu)$ contains all of $\bD$. 
Suppose $\fA$ is a dual subalgebra of $\fM(P^2(\mu))$.  
Then $\fA$ has a Nevan\-linna-Pick family of kernels.
\end{thm}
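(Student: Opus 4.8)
The plan is to reduce Theorem~\ref{T:bpeNP} to Theorem~\ref{T:A1NP} by verifying that $\fA$, as a dual subalgebra of multipliers on the reproducing kernel Hilbert space $H = P^2(\mu)$, has property $\bA_1(1)$. First I would invoke Theorem~\ref{T:bpe}: under the standing hypothesis that the analytic bounded point evaluations of $P^2(\mu)$ fill out $\bD$, the multiplier algebra $\fM(P^2(\mu))$ is isometrically isomorphic and weak-$*$ homeomorphic to $\Hinf$. Since $\fA$ is weak-$*$-closed and unital inside $\fM(P^2(\mu))$, it corresponds under this identification to a weak-$*$-closed unital subalgebra of $\Hinf$, and $\fM(P^2(\mu))$ is itself a dual subalgebra of multipliers on $H$.

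The key step is then to transfer the property $\bA_1(1)$ from $\Hinf$ (where it is classical, $\cite[\text{Theorem }3.7]{BFP83}$) to $\fM(P^2(\mu))$ acting on $H$. The subtlety is that $\bA_1(1)$ is a statement about the \emph{action} of the algebra on the Hilbert space, not merely about the abstract algebra or its predual, so the abstract isometric isomorphism with $\Hinf$ does not by itself suffice. However, Theorem~\ref{T:Berc} of Bercovici does exactly what is needed: $M_z$ on $P^2(\mu)$ is an absolutely continuous contraction (after rescaling $\mu$ so $\|M_z\|\le 1$, which does not affect the algebra or its lattice), the functional calculus map $\Hinf \to \fM(P^2(\mu))$ afforded by Theorem~\ref{T:bpe} is isometric, hence $M_z$ has an isometric functional calculus, and therefore $\fA_{M_z} = \fM(P^2(\mu))$ has property $\bA_1(1)$.

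With $\fB := \fM(P^2(\mu))$ having $\bA_1(1)$, property $\bA_1(1)$ passes to the weak-$*$-closed unital subalgebra $\fA$: a weak-$*$ continuous functional on $\fA$ of norm $<1$ extends by Hahn-Banach to a weak-$*$ continuous functional on $\fB$ of norm still $<1$ (using that the predual of $\fB$ surjects onto that of $\fA$ via restriction, with the quotient norm), and $\bA_1(1)$ for $\fB$ furnishes the desired vector representation $\phi = [xy^*]$ with $\|x\|\,\|y\|<1$; this representation restricts to $\fA$. Finally, since $\fA$ is a dual algebra of multipliers on $H$ with property $\bA_1(1)$, and $E$ is any finite subset of $\bD$ separated by $\fA$, Theorem~\ref{T:A1NP} gives the distance formula
\[
 \dist(f,\fJ_E) = \sup\{ \|P_{N_L} M_f^*|_{N_L}\| : L \in \CycLat(\fA)\},
\]
so $\{k^L : L \in \CycLat(\fA)\}$ is a Nevan\-linna-Pick family of kernels for $\fA$. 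The main obstacle, and the point requiring care, is the transfer of $\bA_1(1)$ to the \emph{concrete} algebra on $P^2(\mu)$ rather than to an abstract copy of $\Hinf$; but Bercovici's theorem is tailor-made for this, so once the isometric functional calculus is observed the argument is essentially bookkeeping.
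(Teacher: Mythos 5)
Your proposal is correct and follows the same route the paper intends (the paper states the theorem with only a reference to Corollary~\ref{C:A1NP}, the chain Theorem~\ref{T:bpe} $\to$ Theorem~\ref{T:Berc} $\to$ inheritance of $\bA_1(1)$ by weak-$*$ closed subalgebras $\to$ Theorem~\ref{T:A1NP} being exactly what is meant). The only cosmetic remark is that no rescaling of $\mu$ is needed: since $\mu$ is supported in $\bD$, multiplication by $z$ on $L^2(\mu)$, and hence on $P^2(\mu)$, is already a contraction.
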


\begin{eg} \label{E:Bergman again}
In particular, Theoorem~\ref{T:bpeNP} provides a Nevan\-linna-Pick condition for Bergman space  $A^2 := A^2(\bD)$, whose reproducing kernel 
kernel $k^B_\lambda = (1-\overline{\lambda}z)^{-2}$ is not an NP kernel.
 In fact, the Bergman kernel fails 
to even have the two-point Pick property.  See  \cite[Example~5.17]{AMc02} for details.
The multiplier algebra of Bergman space has property $\bA_1(1)$ 
as a consequence of much stronger properties (see Section~\ref{S:matrix}), 
but the subspace lattice of the Bergman shift is immense.
\end{eg}

\section{Complete NP kernels} \label{S:DA}

A reproducing kernel is a \textit{complete Nevanlinna-Pick kernel}
if matrix interpolation is determined by the positivity of the Pick
matrix for the data.  
That is, if $H$ is a Hilbert space with reproducing kernel $k$ on a set $X$,
$E=\{\lambda_1,\dots,\lambda_n\}$ is a finite subset of $X$, and 
$W_1,\dots,W_n$ are $r\times r$ matrices, 
then a necessary condition for there to be an element $F \in \M_r(\fM(H))$
with $F(\lambda_i)=W_i$ and $\|F\|\le 1$ is the positivity of the matrix
\[
 \Big[ (I_r - W_i W_j^*) k(\lambda_i,\lambda_j) \Big] .
\]
We say that $k$ is a complete NP kernel if this is also sufficient.

The first author and Pitts \cite{DP98b} showed that symmetric Fock space, 
now called the Drury-Arveson space $H^2_d$, on the complex ball $\bB_d$
of $\bC^d$ (including $d=\infty$) is a complete NP space with kernel 
\[ k (w,z) = \dfrac1{1-\ip{w,z}} .\] 

The complete NP kernels were classified by 
McCullough\cite{McCull92,McCull94} and Quiggin \cite{Quig93,Quig94}
building on work by Agler (unpublished).  Another proof was provided
by Agler and McCarthy \cite{AMc00}, who noticed the universality of the
Drury-Arveson kernel.

\begin{thm}[McCullough, Quiggin, Agler-McCarthy] \label{T:MCQAMC}
Let $k$ be an irreducible kernel on $X$.  
Then $k$ is a complete Nevan\-linna-Pick kernel if and only if 
for some cardinal $d$, there is an injection $b: X \to \bB_d$ 
and a nowhere-vanishing function $\delta : X \to \bC$ such that
\begin{align*}
 k(x,y) =   \dfrac{\ol{\delta(x)}\delta(y)}{1-\ip{b(x),b(y)}} .
\end{align*}
In this case, the map $k_x \to \dfrac{\delta(x)}{1-\ip{z,b(x)}}$ 
extends to an isometry of $H$ into $H^2_d$.
\end{thm}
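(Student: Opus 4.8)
The plan is to prove the two implications separately, with necessity as the substantial half.

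\emph{The form implies the complete NP property.} I would first reduce to the normalized case: writing $k(x,y)=\overline{\delta(x)}\delta(y)\,\widetilde{k}(x,y)$ with $\delta$ nowhere vanishing, the map $f\mapsto\delta^{-1}f$ is a unitary from the RKHS of $\widetilde{k}$ onto that of $k$ which fixes the set of multipliers and their norms and transforms each Pick matrix by a diagonal congruence; hence $k$ is a complete NP kernel iff $\widetilde{k}(x,y)=(1-\ip{b(x),b(y)})^{-1}$ is. Since $\widetilde{k}(x,x)$ must be a finite positive number we get $\|b(x)\|<1$, and then $|\ip{b(x),b(y)}|\le\|b(x)\|\,\|b(y)\|<1$, so $\widetilde{k}$ is exactly the restriction of the Drury--Arveson kernel on $\bB_d$ to the subset $b(X)$. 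By \cite{DP98b}, $H^2_d$ is a complete NP space, and the restriction of a complete NP space to any subset is again complete NP (contractive multipliers restrict, and the Pick matrices for data in the subset are literally the same matrices). The asserted isometry $k_x\mapsto\delta(x)(1-\ip{z,b(x)})^{-1}$ is the composite of the rescaling unitary with the inclusion $\widetilde{k}_x\mapsto(1-\ip{z,b(x)})^{-1}$ of the RKHS of $\widetilde{k}$ into $H^2_d$, which is isometric because $\ip{\widetilde{k}_y,\widetilde{k}_x}=\widetilde{k}(x,y)$ equals the pairing of the corresponding Drury--Arveson kernels.

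\emph{The complete NP property implies the form.} Irreducibility enters twice. Since $k$ is irreducible, $k(x,y)\ne0$ for all $x,y$; fixing a base point $x_0$ and setting $\delta(x)=\overline{k(x,x_0)}/k(x_0,x_0)^{1/2}$ gives a nowhere-vanishing $\delta$ for which the rescaled kernel $\widetilde{k}$ satisfies $\widetilde{k}(\cdot,x_0)\equiv1$ and is still a complete NP kernel. The heart of the argument is the McCullough--Quiggin criterion: a normalized kernel $\widetilde{k}$ is a complete NP kernel if and only if the kernel $1-1/\widetilde{k}$ (entrywise reciprocal) is positive semidefinite. Granting this, I would factor $1-1/\widetilde{k}(x,y)=\ip{b(x),b(y)}$ through a Hilbert space of dimension $d$ via the Kolmogorov decomposition; then $\widetilde{k}(x,y)=(1-\ip{b(x),b(y)})^{-1}$, finiteness and positivity of $\widetilde{k}(x,x)$ force $\|b(x)\|<1$ so that $b:X\to\bB_d$, and $b$ is injective because $b(x)=b(y)$ would yield $\widetilde{k}_x=\widetilde{k}_y$, contradicting the linear independence of $k_x,k_y$ for $x\ne y$ guaranteed by irreducibility. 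Undoing the rescaling recovers the stated formula.

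\emph{Proving the criterion: the main obstacle.} One direction is precisely the computation of the first paragraph (if $1-1/\widetilde{k}\ge0$ then $\widetilde{k}$ is a restriction of the Drury--Arveson kernel). For the other direction I would argue by contradiction on a finite set $F=\{x_1,\dots,x_n\}$: if $[\,1-1/\widetilde{k}(x_i,x_j)\,]$ fails to be positive semidefinite, one must manufacture matrix data $W_1,\dots,W_n$ (of size on the order of $n$) whose Pick matrix $[(I-W_iW_j^*)\widetilde{k}(x_i,x_j)]$ is positive semidefinite but for which no contractive element of $\M_n(\fM(H))$ solves the interpolation problem — the obstruction being extracted from the structure of the finite-dimensional multiplier algebra together with a rank count on the kernel matrix and its inverse (this is \cite{McCull92,Quig93}, with the streamlined realization-theoretic proof in \cite{AMc00}). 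I expect this step — turning the failure of reciprocal-kernel positivity into explicit ``Pick-positive but unsolvable'' data — to be the delicate point; the remainder is bookkeeping with rescalings, Kolmogorov factorizations, and the Davidson--Pitts theorem.
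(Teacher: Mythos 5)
The paper does not prove Theorem~\ref{T:MCQAMC}; it is stated as a cited result of McCullough \cite{McCull92,McCull94}, Quiggin \cite{Quig93,Quig94}, and Agler--McCarthy \cite{AMc00}, so there is no internal argument for your sketch to be measured against. On its own merits, your outline follows the standard route and the bookkeeping is sound: the diagonal rescaling that trades $k$ for the normalized $\widetilde{k}$ preserves multipliers and Pick matrices; $\widetilde{k}(x,x)<\infty$ forces $\|b(x)\|<1$, so $\widetilde{k}$ is the pullback of the Drury--Arveson kernel along $b$; sufficiency then follows from the Davidson--Pitts theorem together with the (correct) observation that a positive Pick matrix for data on $b(X)$ is literally a positive Pick matrix for $H^2_d$, whose interpolant restricts; the basepoint normalization $\delta(x)=\overline{k(x,x_0)}/k(x_0,x_0)^{1/2}$ uses $k(x,x_0)\ne0$ from irreducibility; and injectivity of $b$ uses the other half of irreducibility (non-parallel kernel functions), since $b(x)=b(y)$ would make $\widetilde{k}_x=\widetilde{k}_y$.

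The one substantive item, which you have identified and flagged honestly, is the hard direction of the McCullough--Quiggin criterion: that the complete NP property forces $1-1/\widetilde{k}$ to be positive semidefinite. Your sketch gestures at the contrapositive (build Pick-positive but unsolvable matrix data from the failure of positivity) but does not carry out the construction; this is where all the work of \cite{McCull92}, \cite{Quig93}, or the realization-theoretic argument of \cite{AMc00} resides, and nothing you have written would let a reader verify it. Since the paper itself imports the theorem as a black box, this is acceptable as a proof outline, but you should be explicit that the criterion is being quoted rather than proved. Everything else in the proposal is correct.
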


Since the span of kernel functions is always a co-invariant subspace for
the space of multipliers, the complete NP kernel spaces are seen to 
correspond to certain co-invariant subspaces of Drury-Arveson space,
i.e.\  $\spn\{ k_z : z \in b(X) \}$.
The first author and Pitts \cite{DP98b} show that the multiplier algebras of these spaces are all
complete quotients of the non-commutative analytic Toeplitz algebra, $\fL_d$,
generated by the left regular representation of the free semigroup $\Fd$
on the full Fock space $\Fock$.  It follows immediately that they
are complete quotients of $\fM(H^2_d)$.  
See also Arias and Popescu \cite{AP00}.

We will show that all such quotients of $\fM(H^2_d)$ have property $\bA_1(1)$.
The algebra $\fL_d$ actually has property $\bA_{\aleph_0}$ \cite{DP99}
and even property $X_{0,1}$ \cite{Berco98}.
But these stronger properties do not extend to $\fM(H^2_d)$.

More specifically, if $\fJ$ is a \wot-closed ideal of $\fL_d$ with 
range $M=\ol{\fJ \Fock}$, then \cite{DP98b} shows that 
$\fL_d/\fJ$ is completely isometrically isomorphic to
the compression of $\fL_d$ to $M^\perp$.
Conversely, if $M$ is an invariant subspace of both $\fL_d$
and its commutant, the right regular representation algebras $\fR_d$, 
then $\fJ = \{ A \in \fL_d : \ran A \subset M\}$ is a \wot-closed ideal
with range $M$.  
In particular, if $\fC$ is the commutator ideal, it is shown that
$\fM(H^2_d) \simeq \fL_d/\fC$.
Moreover the compression of both $\fL_d$ and $\fR_d$ to $H^2_d$
agree with $\fM(H^2_d)$.  So if $N$ is a coinvariant subspace of
$H^2_d$, then $M=\Fock \ominus N$ is invariant for both $\fL_d$
and $\fR_d$ and the theory applies.

The following result is due to Arias and Popescu \cite[Prop.1.2]{AP00}.
We provide a proof for completeness.

\begin{thm}[Arias--Popescu] \label{T:quotients have A1}
Let $\fJ$ be any \wot-closed ideal of $\fL_d$ and let $M=\ol{\fJ \Fock}$.
Then $\fA = P_M^\perp \fL_d|_{M^\perp}$ has property $\bA_1(1)$.
\end{thm}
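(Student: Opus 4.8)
The plan is to transfer the $\bA_1(1)$ property from $\fL_d$ (where it is known, indeed in the stronger forms $X_{0,1}$ and $\bA_{\aleph_0}$) down to the quotient $\fA = P_M^\perp \fL_d|_{M^\perp}$ by a direct functional-analytic argument. First I would fix a weak-$*$ continuous functional $\psi$ on $\fA$ with $\|\psi\| < 1$. Composing with the complete quotient map $q: \fL_d \to \fA$, $q(A) = P_M^\perp A|_{M^\perp}$, gives a weak-$*$ continuous functional $\phi = \psi \circ q$ on $\fL_d$ with $\|\phi\| \le \|\psi\| < 1$ (here one uses that $q$ is a weak-$*$ continuous contraction, which follows from the identification of $\fA$ with $\fL_d/\fJ$ and the fact that $\fJ$ is weak-$*$ closed). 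By property $\bA_1(1)$ for $\fL_d$, there are vectors $\xi, \eta \in \Fock$ with $\|\xi\|\,\|\eta\| < 1$ and $\phi = [\xi\eta^*]$ on $\fL_d$.

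The key step is then to massage $\xi$ and $\eta$ so that the functional factors through $M^\perp$. Since $M = \ol{\fJ \Fock}$ is invariant for $\fL_d$, and $\phi$ annihilates $\fJ$ (because $\psi$ is defined on the quotient and $q$ kills $\fJ$), I want to argue that we may replace $\xi$ by $P_{M^\perp}\xi$ and $\eta$ by $P_{M^\perp}\eta$. The cleanest route: $M$ is invariant for $\fL_d$, so $M^\perp$ is invariant for $\fL_d^* = \fR_d$-adjoints, i.e.\ $M^\perp$ is coinvariant for $\fL_d$. For $A \in \fL_d$, write $A$ in block form with respect to $\Fock = M^\perp \oplus M$; coinvariance of $M^\perp$ means the $(M, M^\perp)$ block vanishes, so $A = \begin{sbmatrix} P_{M^\perp}A|_{M^\perp} & P_{M^\perp}A|_{M} \\ 0 & P_M A|_M \end{sbmatrix}$. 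Now the vanishing of $\phi$ on $\fJ$: the range of every element of $\fJ$ lies in $M$, so for $B \in \fJ$ we have $0 = \ip{B\xi, \eta} = \ip{B\xi, P_M\eta}$ for all such $B$; since $\{B\xi : B \in \fJ\}$ and more generally $\fJ\Fock$ spans (a dense subset of) $M$ — actually one needs $\ol{\fJ \xi}$, so instead I would use $\ol{\fJ \Fock} = M$ together with a limiting/cyclicity argument, or simply observe $P_M \eta \perp \ol{\fJ\xi}$ and handle the complementary piece directly. A more robust formulation: decompose $\eta = \eta_0 + \eta_1$ with $\eta_0 = P_{M^\perp}\eta$, $\eta_1 = P_M \eta$; then for $A \in \fL_d$, $\ip{A\xi, \eta_1} = \ip{P_M A \xi, \eta_1}$ and $P_M A P_{M^\perp} = 0$-adjoint-wise gives $P_M A \xi = P_M A P_M \xi + (\text{block})$; pushing this through shows $\ip{A\xi,\eta_1}$ depends only on $P_M A|_M$ applied to $P_M\xi$, hence defines a weak-$*$ continuous functional on $\fL_d/\fJ^{\perp\perp}$-type object supported on $M$. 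The point is that $\phi$ restricted to the "$M$-corner" must vanish because $\phi|_\fJ = 0$ and $\fJ$ surjects onto (a weak-$*$ dense subalgebra related to) that corner — here I would invoke the structure from \cite{DP98b} that $\fJ = \{A \in \fL_d : \ran A \subset M\}$ when $M$ is bi-invariant, so the compression to $M$ is exactly what $\fJ$ controls.

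Once this is done, $\xi' := P_{M^\perp}\xi$ and $\eta' := P_{M^\perp}\eta$ satisfy $\|\xi'\|\,\|\eta'\| \le \|\xi\|\,\|\eta\| < 1$, they lie in $M^\perp$, and for $A \in \fL_d$ we have $\ip{A\xi', \eta'} = \ip{P_{M^\perp}A P_{M^\perp}\xi, P_{M^\perp}\eta} = \ip{q(A)(\xi'), \eta'}$ using the block-triangular form. Hence $\psi(q(A)) = \ip{q(A)\xi', \eta'}$ for all $A$, and since $q$ is surjective onto $\fA$, $\psi = [\xi'\eta'^*]$ as a vector functional on $\fA$ with $\|\xi'\|\,\|\eta'\| < 1$. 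This establishes property $\bA_1(1)$ for $\fA$.

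The main obstacle is the second step: correctly justifying that $\phi$ annihilating $\fJ$ forces the "lower-right corner" contribution $\langle P_M A \xi, P_M \eta\rangle$ to vanish identically in $A$. This is where one genuinely needs the concrete description of $\fJ$ and $M$ for $\fL_d$ from \cite{DP98b} (ideals correspond to their ranges, which are invariant for $\fL_d$), rather than soft abstract nonsense — a naive argument would only give orthogonality against $\ol{\fJ\xi}$, not against all of $M$. The remaining bookkeeping with block-triangular forms and the contractivity/weak-$*$-continuity of the quotient map is routine.
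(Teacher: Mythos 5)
You have correctly identified where the difficulty lives, but you have not actually closed it, and as written the argument has a genuine gap at exactly the spot you flag. After factoring $\phi = [\xi\eta^*]$ on $\fL_d$, annihilation of $\fJ$ only gives $P_M\eta \perp \ol{\fJ\xi}$. You then want to conclude that the ``$M$-corner'' contribution $\ip{A\xi, P_M\eta}$ vanishes for all $A\in\fL_d$, i.e.\ that $P_M\eta \perp \ol{\fL_d\xi}\cap M$. Knowing $\fJ = \{A\in\fL_d : \ran A\subset M\}$ does not rescue this: for a non-cyclic $\xi$ one has $\ol{\fJ\xi}\subsetneq M$ in general, and there is no reason the cross-term $\ip{A\xi_0+A\xi_1,\ \eta_1}$ should die. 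Your ``limiting/cyclicity argument'' is the right instinct, but it is precisely the content that needs to be supplied, not waved at.

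The missing step, which is the heart of the paper's proof, is to \emph{first replace} $\xi$ by a cyclic vector before projecting. By the Beurling-type theorem for $\fL_d$ \cite{AP95,DP99}, the cyclic invariant subspace $L=\ol{\fL_d[\xi]}$ equals $R\Fock$ for an isometry $R\in\fR_d$. Write $\xi=Ru$ and put $v=R^*\eta$; then $[\xi\eta^*]=[uv^*]$ because $R$ is an isometry commuting with $\fL_d$, and $u$ is now cyclic: $\ol{\fL_d u}=\Fock$. Cyclicity is what buys you the clean computation $\ol{\fJ u}=\ol{\fJ\,\fL_d u}=\ol{\fJ\Fock}=M$, hence $v\perp M$, i.e.\ $v\in M^\perp$. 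Once $v$ already lies in $M^\perp$, the block-triangularity/coinvariance argument you sketch applies cleanly to give $\phi=[(P_M^\perp u)v^*]$ with the right norm bound. Without the intermediate passage to a cyclic vector via $R$, the plan of simply projecting $\xi$ and $\eta$ onto $M^\perp$ does not preserve the functional, and the orthogonality you can actually prove is too weak to repair it. (A minor additional slip: you write ``$M^\perp$ is invariant for $\fL_d^*=\fR_d$-adjoints''; $\fR_d$ is the \emph{commutant} of $\fL_d$, not its adjoint algebra. What you actually use is only that $M^\perp$ is coinvariant for $\fL_d$, which is immediate from invariance of $M$.)
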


\begin{proof}
Let $q: \fL_d \to \fL_d / \fJ \simeq \fA$ denote the canonical quotient map.
Suppose $\phi$ is a weak-$*$ functional on $\fA$ with $\|\phi\| < 1$.  
Then $\phi \circ q$ is a weak-$*$ continuous functional on $(\fL_d)_*$
of norm less than $1$.
Hence there are vectors $x$ and $y$ in $\Fock$ with $\phi \circ q = [xy^*]$
and $\| x \| \| y \| < 1$.  
Form the cyclic subspace $L = \fL_d[x]$.
By \cite{AP95, DP99} there is an isometry $R \in \fR_d$ 
such that $\fL_d[x] = R\Fock$.  
Let $u$ be the vector such that $x = Ru$ and set $v = R^*y$.  
A direct calculation shows that $[xy^*] = [u v^*]$.
Observe that $\fL_d[u]=\Fock$.

Obviously $[uv^*]$ annihilates $\fJ$, and it follows that $v$ is orthogonal to
\begin{align*}
 \ol{\fJ u} &= \ol{\fJ \fL_d u}= \ol{\fJ \Fock} = M.
\end{align*}
Thus $v \in M^\perp$.
Now, for $A \in \fA$, pick $B \in \fL_n$ with $A = q(B) = P_M^\perp B|_M^\perp$.  
Then since $M^\perp$ is coinvariant for $\fL_d$,
\begin{align*}
\phi(A) &= \phi \circ q(B) = \ip{ Bu,v } \\&
= \ip{ Bu, P_M^\perp v } 
= \ip{  P_M^\perp B  P_M^\perp u,v } \\& 
= \ip{ A (P_M^\perp u), v }.
\end{align*}
Hence $\phi = [(P_M^\perp u) v^*]$. Therefore $\fA$ has property $\bA_1(1)$.
\end{proof}

The remarks preceding this theorem show that the multiplier algebra
of every complete NP kernel arise in this way.  
We further refine this to observe that the vector $x$ in the factorization
may be chosen so that it does not vanish.  

\begin{cor} \label{C:DA has A1}
The multiplier algebra $\M(H)$ of every complete NP kernel has property $\bA_1(1)$.
In particular, $\fM(H^2_d)$ has property $\bA_1(1)$.
Moreover, each $\phi \in \fM(H)_*$  with $\|\phi\|<1$ 
can be represented as $\phi=[xy^*]$ such that $x$ does 
not vanish on $X$ and $\|x\|\,\|y\| < 1$.
\end{cor}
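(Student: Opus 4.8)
The plan is to realize $\fM(H)$ as a quotient of the noncommutative analytic Toeplitz algebra $\fL_d$, apply Theorem~\ref{T:quotients have A1}, and then read off the nonvanishing factorization from the vectors produced in the proof of that theorem.

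First I would set up the identification. By Theorem~\ref{T:MCQAMC} there are $b\colon X\to\bB_d$ and a nowhere-vanishing $\delta$ so that $H$ embeds isometrically into the Drury-Arveson space $H^2_d$ via $\iota\colon k^H_\lambda\mapsto\delta(\lambda)\,\nu_{b(\lambda)}$, where $\nu_\zeta$ denotes the reproducing kernel of $H^2_d$ at $\zeta\in\bB_d$ regarded as a vector in $\Fock$; recall that $\nu_\zeta$ is a common eigenvector for $L_1^*,\dots,L_d^*$, with $L_i^*\nu_\zeta=\ol{\zeta_i}\,\nu_\zeta$. Then $\iota(H)$ is the co-invariant subspace $N=\spn\{\nu_{b(\lambda)}:\lambda\in X\}$ of $\Fock$, and by \cite{DP98b,AMc00} the algebra $\fM(H)$ is completely isometrically isomorphic to $\fL_d/\fJ$ for the \wot-closed ideal $\fJ$ with $M:=\ol{\fJ\Fock}=N^\perp$; under this identification $\fM(H)\simeq P_M^\perp\fL_d|_{M^\perp}$ acting on $M^\perp=N\cong H$. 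Theorem~\ref{T:quotients have A1} immediately gives that $\fM(H)$ has property $\bA_1(1)$; the case $b=\mathrm{id}_{\bB_d}$, $\delta\equiv1$ recovers the statement for $\fM(H^2_d)$.

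Next I would upgrade to the nonvanishing factorization. Fix $\phi\in\fM(H)_*$ with $\|\phi\|<1$. Running through the proof of Theorem~\ref{T:quotients have A1} produces a vector $u\in\Fock$ that is cyclic for $\fL_d$ (i.e.\ $\fL_d[u]=\Fock$) together with $v\in M^\perp$ such that $\phi=[(P_M^\perp u)\,v^*]$ and $\|P_M^\perp u\|\,\|v\|\le\|u\|\,\|v\|<1$. The crucial point is that a cyclic vector $u$ for $\fL_d$ satisfies $\ip{u,\nu_\zeta}\ne0$ for every $\zeta\in\bB_d$: since $\nu_\zeta$ is a common eigenvector for $\fL_d^*$, the line $\bC\nu_\zeta$ is invariant for $\fL_d^*$, so $\{\nu_\zeta\}^\perp$ is a proper $\fL_d$-invariant subspace of $\Fock$, and if $u\perp\nu_\zeta$ then $\fL_d[u]\subseteq\{\nu_\zeta\}^\perp\subsetneq\Fock$, contradicting cyclicity. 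Identifying $H$ with $N=M^\perp$ and setting $x:=P_M^\perp u$, for each $\lambda\in X$ we have, using $\nu_{b(\lambda)}\in M^\perp$ and $\iota(k^H_\lambda)=\delta(\lambda)\nu_{b(\lambda)}$,
\[
 x(\lambda)=\ip{x,k^H_\lambda}=\ol{\delta(\lambda)}\,\ip{P_M^\perp u,\nu_{b(\lambda)}}=\ol{\delta(\lambda)}\,\ip{u,\nu_{b(\lambda)}}\ne0 ,
\]
since $\delta$ is nowhere zero and $u$ is cyclic. Hence $\phi=[x y^*]$ with $y:=v$, $x$ vanishing nowhere on $X$, and $\|x\|\,\|y\|<1$.

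I expect the only genuinely new ingredient to be the observation in the last paragraph that cyclic vectors for $\fL_d$ are nonvanishing on $\bB_d$ — the exact analogue of the fact that outer functions on the disk have no zeros, which is precisely what the Hardy-space discussion in Section~\ref{S:single} exploits — while the identification $\fM(H)\simeq\fL_d/\fJ$ and the extraction of the vectors $u,v$ are bookkeeping on top of \cite{DP98b}, \cite{AMc00} and the proof of Theorem~\ref{T:quotients have A1}.
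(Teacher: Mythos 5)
Your proof takes exactly the same route as the paper's: realize $\fM(H)$ as the compression of $\fL_d$ to the co-invariant span of the (rescaled) Drury--Arveson kernels via Theorem~\ref{T:MCQAMC} and \cite{DP98b}, invoke Theorem~\ref{T:quotients have A1} for property $\bA_1(1)$, and then observe that the cyclic vector $u$ produced in that proof cannot be orthogonal to any $\nu_\zeta$ (because $\bC\nu_\zeta$ is co-invariant for $\fL_d$), whence $P_M^\perp u$ is nowhere vanishing after unwinding the nowhere-zero scaling $\delta$. Your write-up is if anything a bit cleaner than the paper's: the paper's proof writes $M=\spn\{k_\lambda\}$ but then uses $P_M^\perp u$, which is a notational slip (it should be $M^\perp=\spn\{k_\lambda\}$ to match Theorem~\ref{T:quotients have A1}); you avoid this by naming the co-invariant subspace $N$ and setting $M=N^\perp$, and your conjugate $\ol{\delta(\lambda)}$ is the correct factor where the paper writes $\delta(\lambda)^{-1}$. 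None of these affect the substance: the argument is the same.
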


\begin{proof}
We may assume that $\M(H)$ is the compression of $\fL_d$ to the subspace
$M = \spn \{ k_\lambda : \lambda \in X \subset \bB_n\}$.
It remains only to verify that in the proof of Theorem~\ref{T:quotients have A1},
the function $P_M^\perp u$ does not vanish.  
Since $u$ is a cyclic vector for $\fL_d$, it is not orthogonal to any $k_\lambda$
because $\bC k_\lambda$ is coinvariant.
Therefore for any $\lambda\in X$, 
\[
 0 \ne \ip{u, k_\lambda} = \ip{u, P_M^\perp k_\lambda} 
 = \ip{P_M^\perp u, k_\lambda} = \delta(\lambda)^{-1} (P_M^\perp u)(\lambda) , 
\]
where $\delta$ is the scaling function of the embedding in Theorem~\ref{T:MCQAMC}.
\end{proof}

The proof actually shows that it suffices to use vectors $h$ which are cyclic for $\fM(H)$.
In the case of Drury-Arveson space, like for Hardy space, these vectors are
called outer.

\begin{rem}
The algebra $\fM(H^2_d)$ does not have property $\bA_{\aleph_0}$.
The reason is that algebras with this property have non-trivial invariant
subspaces which are orthogonal \cite{Berco87}.
But any two non-trivial invariant subspaces of $\fM(H^2_d)$ have non-trivial
intersection.  Indeed, if $M \in \Lat \fM(H^2_d)$, then $N=M+(H^2_d)^\perp$
is invariant for $\fL_d$.  By  \cite{AP95, DP99}, this space is the direct sum
of cyclic invariant subspaces $N_i = R_i\Fock$ for isometries $R_i \in \fR_d$.
The compression $P_{H^2_d} R_i|_{H^2_d}$ is a multiplier $M_{f_i}$
in $\fM(H^2_d)$.  Thus 
\begin{align*}
 M &= P_{H^2_d} N = \sum_i P_{H^2_d} R_i \Fock \\&
 = \sum_i P_{H^2_d} R_i P_{H^2_d} \Fock
 = \sum_i M_{f_i} H^2_d .
\end{align*}
In particular, every invariant subspace $M$ contains the range 
of a non-zero multiplier $M_f$.
Hence given two invariant subspaces $M$ and $N$ in $H^2_d$, 
we can find non-zero multipliers $f$ and $g$ with $\ran M_f \subset M$
and $\ran M_g \subset N$.  So $M \cap N$ contains $\ran M_{fg}$.

Based on heuristic calculations, we expect that $\fM(H^2_d)$ 
does not have property $\bA_2(r)$ for any $r\ge1$, 
and likely not $\bA_2$.
\end{rem}

We are now in a position to apply Theorem~\ref{T:A1ML}.

\begin{thm}
Suppose $k$ is an irreducible, complete Nevan\-linna-Pick kernel on $X$.  
Then any dual subalgebra $\fA$ of multipliers of $H$ admits a Nevan\-linna-Pick 
family of kernels.   More specifically, if $E= \{\lambda_1,...,\lambda_n\}$
is a finite subset of $X$ which is separated by $\fA$
and $w_1,\dots,w_n$ are scalars, then  
there is a multiplier $f$ in the unit ball of $\fA$ with $f(\lambda_i) = w_i$
for $1 \le i \le n$ if and only if
\begin{align*}
 \Big[ (1 - w_i\ol{w_j}) \ip{P_L k_{\lambda_j},k_{\lambda_i}} \Big] \geq 0
\end{align*}
for all cyclic  invariant subspaces $L = \fA[h]$ of $\fA$
$($and it suffices to use $h$ which do not vanish on $E)$. 
\end{thm}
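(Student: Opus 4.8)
The plan is to assemble the theorem from the machinery already developed, since the pieces are essentially in place. The logical backbone is: \emph{complete NP kernel} $\Rightarrow$ \emph{its multiplier algebra $\fM(H)$ has the refined $\bA_1(1)$ property of Corollary~\ref{C:DA has A1}} $\Rightarrow$ \emph{every weak-$*$-closed subalgebra inherits the hypotheses of Theorem~\ref{T:A1ML}} $\Rightarrow$ \emph{the Pick-matrix criterion with spaces $M_L$ suffices}. So the first step is to fix an irreducible complete NP kernel $k$ on $X$ and a dual subalgebra $\fA$ of $\fM(H)$, and to quote Corollary~\ref{C:DA has A1}: $\fM(H)$ has $\bA_1(1)$, and moreover every $\phi\in\fM(H)_*$ with $\|\phi\|<1$ factors as $\phi=[xy^*]$ with $\|x\|\,\|y\|<1$ and $x$ nowhere-vanishing on $X$. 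Since $\fA$ is weak-$*$-closed in $\fM(H)$, by Hahn--Banach every $\psi\in\fA_*$ with $\|\psi\|<1$ extends to some $\phi\in\fM(H)_*$ with $\|\phi\|<1$; applying the refined factorization to $\phi$ and noting that $[xy^*]|_\fA=\psi$, we see that $\fA$ satisfies precisely the hypothesis of Theorem~\ref{T:A1ML} (the non-vanishing of $x$ on all of $X$ is stronger than non-vanishing on the finite set $E$).

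The second step is purely an invocation. Given $E=\{\lambda_1,\dots,\lambda_n\}$ separated by $\fA$ and scalars $w_1,\dots,w_n$, Theorem~\ref{T:A1ML} applies verbatim and yields: there is a multiplier $f$ in the unit ball of $\fA$ with $f(\lambda_i)=w_i$ if and only if
\[
 \Big[ (1 - w_i\ol{w_j}) \ip{P_L k_{\lambda_j},k_{\lambda_i}} \Big] \geq 0
\]
for every cyclic subspace $L=\fA[h]$ with $h$ not vanishing on $E$. That is exactly the asserted equivalence, with the parenthetical ``it suffices to use $h$ which do not vanish on $E$'' being built into the statement of Theorem~\ref{T:A1ML}.

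For the first (general) assertion --- that $\fA$ admits a Nevan\-linna-Pick family of kernels --- I would observe that $\fM(H)$ having $\bA_1(1)$ puts us directly under Theorem~\ref{T:A1NP} (via Theorem~\ref{T:dist_ideal} applied to $\fJ=\fJ_E$), so the distance formula $\dist(f,\fJ_E)=\sup_{L\in\CycLat(\fA)}\|P_{N_L}M_f^*|_{N_L}\|$ holds; by definition this says $\{k^L : L\in\CycLat(\fA)\}$ is an NP family. The refinement to the $M_L$-spaces and to outer $h$ then follows because, as noted above, the factorizing vector $x$ may be taken nowhere-vanishing, so in the proof of Theorem~\ref{T:dist_ideal} the cyclic vector generating $L$ does not vanish on $E$, whence $M_L=N_L$ by Corollary~\ref{C:cyclic ML} and $k^L_\lambda=P_Lk_\lambda$ on $E$.

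I do not expect a genuine obstacle here: the theorem is a corollary of Theorem~\ref{T:A1ML} and Corollary~\ref{C:DA has A1}, and the only thing requiring a line of care is the Hahn--Banach extension step --- one must check that extending a functional on $\fA$ to $\fM(H)$ with a small norm increase, then applying the refined factorization, still returns (upon restriction) the original functional with the non-vanishing property intact; this is routine since restriction of $[xy^*]$ to $\fA$ is again $[xy^*]$ and the non-vanishing of $x$ is a property of $x$ alone, independent of which algebra one evaluates against.
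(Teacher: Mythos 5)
Your proposal is correct and follows exactly the route the paper intends: the paper states the theorem immediately after remarking ``We are now in a position to apply Theorem~\ref{T:A1ML},'' and gives no separate proof. Your contribution is to spell out the Hahn--Banach step that transfers the refined $\bA_1(1)$ factorization from $\fM(H)$ (Corollary~\ref{C:DA has A1}) down to the subalgebra $\fA$ --- namely that a weak-$*$ continuous $\psi\in\fA_*$ with $\|\psi\|<1$ lifts to $\phi\in\fM(H)_*$ with $\|\phi\|<1$ because restriction $\fM(H)_*\to\fA_*$ is a metric surjection, and $[xy^*]|_\fA=\psi$ with $x$ still nowhere-vanishing --- which the paper leaves implicit but uses in the same way in the $\Hinf$/Hardy discussion of Section~\ref{S:single}.
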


\section{Finite Kernels} \label{S:finite}

In this section, we present numerical evidence that there is a finite 
dimensional multiplier algebra $\fA$ with the property that 
$\CycLat(\fA)$ is not an NP family for $\fA$.  In particular, this algebra does not have $\bA_1(1)$ . It does, however,  have property  $\bA_1(r)$ for some $r > 1$.

Suppose $X = \{\lambda_1,\dots,\lambda_N\}$ is a finite set and 
$k : X \times X \rightarrow \bC$ is an irreducible kernel.  
Let $y_1,\dots,y_N$ be vectors in $\bC^N$ such that
$k(\lambda_i,\lambda_j)  = \ip{ y_j,y_i}$, and 
let $\{x_1,\dots,x_N\}$ be a dual basis for the $y_i$.
The space $H = \bC^N$ may  be regarded as a reproducing kernel Hilbert space
over $X$, with reproducing kernel at $\lambda_i$ given by $y_i$.
The multiplier algebra $\fM(H)$ is an $N$-idempotent operator algebra spanned by
the rank one idempotents $p_i = x_i y_i^*$.

If $\{e_i\}$ is the canonical orthonormal basis for $\bC^N$, 
then one readily sees that $\fM(H)$ is similar to the diagonal 
algebra $\fD_N$ via the similarity $ S$ defined by $Se_i = x_i$.  
Since $\fD_N$ evidently has property $\bA_1(1)$, it follows from 
elementary results on dual algebras that
$\fM(H)$ has $\bA_1(r)$ for some $r \geq 1$.

If $k$ is irreducible and a complete NP kernel, then Corollary~\ref{C:DA has A1} 
shows that $\fM(H)$ has $\bA_1(1)$.  
However, there are many kernels $k$ that cannot
be embedded in Drury-Arveson space in this way.  
We expect that many of these algebras fail to have $\bA_1(1)$ 
and that the distance formula fails in such cases.  

Since $\fA$ is similar to the diagonal algebra $\fD_N$, the invariant
subspaces are spanned by some subset of $\{x_1,\dots, x_N\}$.
Denote them by $L_\sigma = \spn\{x_i : i \in \sigma\}$.
For $E \subset \{1,\dots,N\}$, the ideal $\fJ = \fJ_E = \spn\{ p_i : i \not\in E \}$.
Then $\fJ L_\sigma = L_{\sigma\setminus E}$, and
$N_\sigma := N_{L_\sigma} = L_\sigma \ominus L_{\sigma \setminus E}$.
The distance formula is obtained as the maximum of compressions to 
these subspaces---so we need only consider the maximal ones.
These arise from $\sigma \supset E$.

For trivial reasons, the distance formula is always satisfied when $N = 2$ and $N=3$.  
There is strong numerical evidence to suggest that the formula does hold for $N=4$, 
though we do not have a proof.  In the following $5$-dimensional example, 
\emph{Wolfram Mathematica 7} was used to find a similarity $S$ such
that the distance formula fails.  

\begin{eg}
Define the similarity
\[
S= \left[ \begin{array}{rrrrr}   
3&1 & 1 & 0 & -1  \\ 
0 & 1 & -2 & -1 &0 \\ 
-1 & 0 & -1 & 1 &-1 \\ 
-1 & 1 & 2 & 1 & -1  \\ 
1 & 1 & 3 &1 & -2      
\end{array} \right]
\]
Let $p_i = x_i y_i^*$ for $1 \le i \le 5$ be the idempotents
which span the algebra $\fA := \fM(H)$. 
Let $E = \{1,2,3\}$, and form $\fJ =\fJ_E = \spn\{ p_4, p_5 \}$.  

Consider the element $A = -2p_1 -3p_2 + 7p_3$.
We are interested in comparing 
$\max_{\sigma}  \| P_{N_\sigma} A P_{N_\sigma} \| $
with $\dist(A,\fJ)$.
As noted above, it suffices to use maximal $N_\sigma$'s formed by
the cyclic subspaces that do not vanish on $E$, namely
\begin{align*}
 N_{\{123\}} &= \spn \{x_1,x_2,x_3\},\\ 
 N_{\{1234\}} &= \spn\{x_1,x_2,x_3,x_4\} \ominus \bC x_4,\\ 
 N_{\{1235\}} &= \spn\{x_1,x_2,x_3,x_5\} \ominus \bC x_5,\AND \\
 N_{\{12345\}} &= \spn\{x_4,x_5\}^\perp = \spn\{y_1,y_2,y_3\}. 
\end{align*}
For notational convenience, set $P_\sigma := P_{N_\sigma}$. 
The values of 
$ \| P_\sigma A P_\sigma \| $
were computed and rounded to four decimal places:
\begin{align*}
\| P_{123} A P_{123}\| &= \phantom{0}9.0096 ,\\
\| P_{1234} A P_{1234}\| &=10.1306 , \\
\| P_{1235} A P_{1235}\| &= \phantom{0}7.4595 ,\\
\| P_{12345} A P_{12345}\| &= 10.6632 .
\end{align*}
By minimizing a function of two variables, the following distance estimate was obtained
\begin{align*}
\dist(A,\fJ) \approx 11.9346.
\end{align*}
Similar results appeared for many different elements of $\fA$, which indicate that $\CycLat(\fA)$ is not an NP family for $\fA$.
Consequently, it must also fail to have $\bA_1(1)$.  We currently have no example of a dual algebra of multipliers  on any $H$ that fails to have $\bA_1(r)$ for every $r \geq 1$, or even fails to have $\bA_1$.  
\end{eg}

\section{Matrix-valued Interpolation} \label{S:matrix}

In this section, we will discuss matrix-valued Nevan\-linna-Pick interpolation problems.  
The classical theorem for matrices says that given $z_1,...,z_n$ in the disk, 
and $r \times r$ matrices $W_1,...,W_n$, there is a function $F$ in the the unit
ball of $\M_r(\Hinf)$ such that $F(z_i) = W_i$ if and only if the Pick matrix
\begin{align*}
 \Bigg[ \frac{I_r-W_iW_j^*}{1-z_i\ol{z_j}} \Bigg]_{r\times r}
\end{align*}
is positive semidefinite.  

One can define a linear map $R$ on $M(E) \otimes \bC^r$ by setting 
\[
 R (k^s_{\lambda_i} \otimes u) = k^s_{\lambda_i} \otimes W_i^*u 
 \qfor 1 \le i \le n \AND u \in \bC^r .
\]
Note that if $F$ is an arbitrary interpolant, then $R = M_F^*|_{M(E) \otimes \bC^r}$.
Now $\|R\|\le 1$ is equivalent to $I-R^*R \ge 0$, which is equivalent
to the positivity condition above.  
Hence, this provides the complete distance formula:
\begin{align*}
 \dist(F, \M_r(\fJ)) = \| M_F^*|_{M(E) \otimes \bC^r}\| .
\end{align*}
The same holds (by definition) for all complete NP kernels when
the factor $\frac1{1-z_i\ol{z_j}}$ is replaced by $k(\lambda_i,\lambda_j)$.
The multiplier algebra of all complete NP kernels therefore satisfy the analogous distance formula.

Our goal is to generalize the results of Section~\ref{S:NPfamilies} 
to a matrix-valued setting by imposing stronger conditions on our 
algebras of multipliers.
Let $(H,k)$ be a reproducing kernel Hilbert space over $X$, and let $r \geq 1$. 
We will consider the algebra $\M_r(\fM(H))$ of $r\times r$ matrices 
of multipliers acting on the vector-valued  space $H^{(r)} = H \otimes \bC^r$.

For any non-zero vector $u \in \bC^r$, the functions 
$k_\lambda \otimes u$ act as vector-valued kernel functions. 
We can therefore consider  $\M_r(\fM(H))$ as functions on $X$ with values
in $\M_r$.  They act as multipliers of $H^{(r)}$, and inherit a norm
as elements of $\M_r(\B(H)) \simeq \B(H^{(r)})$.

It is readily verified that for any multiplier $F$ 
and any $\lambda \in X$, we have 
\[
 M_F^* (k_\lambda \otimes u) = k_\lambda \otimes F(\lambda)^*u
 \qfor \lambda \in X \AND u \in \bC^r. 
\]
Conversely, any bounded operator on $H^{(r)}$ that satisfies 
these relations is a multiplier of $H^{(r)}$.  
The algebra $\M_r(\fM(H))$ is a unital, weak-$*$-closed subalgebra 
of $\M_r(\B(H))$, and thus is a dual algebra.  
Consequently, we may apply the same heuristic as 
Section~\ref{S:NPfamilies} when trying to compute distances.

Any dual subalgebra $\fA$ of $\fM(H)$ determines the
dual subalgebra $\M_r(\fA)$ of   $\M_r(\fM(H))$.
Suppose that $E=\{\lambda_i : 1 \le i \le n \}$ is a 
finite subset of $X$ separated by $\fA$.
Let $\fJ_E$ be the ideal of functions in $\fA$ vanishing on $E$.  
For $F \in \M_r(\fA)$, any subspace of the form $L^{(r)}$ for 
$L \in \Lat(\fA)$ is invariant for $\M_r(\fA)$.
Conversely, any invariant subspace of $\M_r(\fA)$ takes this form.  

The subspace $L^{(r)}$ is cyclic if and only if $L$ is $r$-cyclic
because if $x_1,\dots,x_r$ is a cyclic set, then $x=(x_1,\dots,x_r)$
is a cyclic vector for $L^{(r)}$ and vice versa.
So in general we cannot deal only with cyclic invariant subspaces
of the algebra $\fA$.  We will have to deal with some multiplicity
of the kernels on these spaces.
This can be handled as in the discussion in Remark~\ref{R:multiplicity}.

We can apply Lemma~\ref{L:dist1} to $\M_r(\fA)$ and the ideal
$\M_r(\fJ_E)$.  For any $F \in \M_r(\fA)$, we have
\begin{align*}
 \dist(F, \M_r(\fJ_E)) &\ge 
 \sup_{L \in \Lat(\fA)} \| (P_{N_L} \otimes I_r) M_F (P_{N_L} \otimes I_r)  \| \\&
 \ge  \sup_{L \in \Lat(\fA)} \| (P_{M_L} \otimes I_r) M_f (P_{M_L} \otimes I_r) \|.
\end{align*}

\begin{defn}
If equality holds for this distance formula for every finite subset $E$
which is separated by $\fA$, i.e.\ for any $F \in \M_r(\fA)$, and
$N_L = L \ominus \fJ_E L$ for $L \in \Lat \fA$, 
\[
 \dist(F, \M_r(\fJ_E)) =
 \sup_{L \in \Lat(\fA)} \| (P_{N_L} \otimes I_r) M_F (P_{N_L} \otimes I_r)  \|
\]
then we say that $\Lat \fA$ is an \textit{$r\times r$ Nevan\-linna-Pick family} 
for $\fA$.  
If this holds for all $r \geq 1$, then we say that $\Lat \fA$
is a \emph{complete Nevan\-linna-Pick family} for $\fA$.
\end{defn}

Generally, property $\bA_1(1)$ is not inherited by matrix algebras.
Conway and Ptak \cite{CP02} show that any absolutely
continuous contraction in class $C_{00}$ with an isometric functional
calculus has $X_{0,1}$.  This includes the Bergman shift $B$, and consequently the multiplier algebra of Bergman space.
The property $X_{0,1}$ implies that $\M_r(\fA)$ has $\bA_1(1)$ for every $r\ge1$.  

On the other hand, it can be the case that some finite ampliation
of the algebra will have $\bA_1(1)$.
Given a dual algebra $\fA$ on a Hilbert space $H$, 
the $k$-th ampliation $\fA^{(k)}$ is an isometric representation
of $\fA$ on $H^{(k)}$, the direct sum of $k$ copies of $H$,
with elements $A^{(k)} = A \oplus \dots \oplus A$, the direct sum of $k$ copies of $A$.
The preduals $\fA_*$ and $\fA^{(k)}_*$ are isometrically isomorphic.
However, a rank $k$ functional on $\fA$ converts to a rank one functional
on $\fA^{(k)}$ since
\[
 \sum_{i=1}^k \ip{A x_i, y_i} =  \ip{T^{(k)} \mathbf{x},\mathbf{y}} 
\]
where $\mathbf{x}=(x_1,\dots, x_k)$ and 
$\mathbf{y} = (y_1,\dots, y_k)$ in $H^{(k)}$.
If $\fA$ has $\bA_1$, so does $\M_r(\fA^{(r^2)})$ (Proposition 2.6 \cite{BFP85}),
but the constants are not always good enough.

The infinite ampliation of any operator algebra has $\bA_1(1)$.
This is because weak-$*$ continuous functional on $\B(H)$  can be
represented by a trace class operator.  Using the polar decomposition,
this can be realized as $\phi = \sum_{i=1}^\infty [x_i y_i^*]$ where
$\sum_i \|x_i\|^2 = \sum_i \|y_i\|^2 = \|\phi\|$.  So
\[
 \phi(T) = \sum_{i=1}^\infty \ip{Tx_i,y_i} = 
 \ip{T^{(\infty)} \mathbf{x},\mathbf{y}} 
\]
where $\mathbf{x}=(x_1,x_2,\dots)$ and 
$\mathbf{y} = (y_1,y_2,\dots)$ in $H^{(\infty)}$.
In fact, this infinite ampliation is easily seen to have property $X_{0,1}$.

As in Theorem~\ref{T:A1NP}, if $\M_r(\fA)$ has property $\bA_1(1)$,
then we obtain an exact distance formula which yields a
Nevan\-linna-Pick type theorem for these algebras.
The proof is the same.

\begin{thm} \label{T:matrixNP}
Suppose $\fA$ is a dual algebra of multipliers on $H$. 
If $\M_r(\fA)$ has property $\bA_1(1)$, then 
$\Lat \fA$ is an $r\times r$ Nevan\-linna-Pick family for $\fA$. 

More generally, if the ampliation $\M_r(\fA^{(s)})$ has 
$\bA_1(1)$, then $\Lat(\fA^{(s)})$ is an $r\times r$ 
Nevan\-linna-Pick family for $\fA$. In particular, $\Lat (\fA^{(\infty)})$ is a complete Nevan\-linna-Pick family 
for any algebra of multipliers $\fA$.
\end{thm}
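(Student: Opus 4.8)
The plan is to obtain all three assertions from Theorem~\ref{T:dist_ideal}, applied to a suitably amplified algebra, in exactly the way Theorem~\ref{T:A1NP} was obtained from it; the only new ingredient is bookkeeping about matrix amplifications and ampliations.

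First suppose $\M_r(\fA)$ has property $\bA_1(1)$. It is a dual algebra on $H^{(r)} = H \otimes \bC^r$, and $\M_r(\fJ_E) = \{ F \in \M_r(\fA) : F|_E = 0 \}$ is a \wot-closed ideal of it. Since $I_H \otimes \M_r(\bC) \subseteq \M_r(\fA)$, every invariant subspace of $\M_r(\fA)$ has the form $L \otimes \bC^r$ with $L \in \Lat\fA$, and, as recorded in the text, $L \otimes \bC^r$ is cyclic precisely when $L$ is $r$-cyclic; hence $\CycLat(\M_r(\fA)) \subseteq \{ L \otimes \bC^r : L \in \Lat\fA \}$. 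One checks that $\M_r(\fJ_E)(L\otimes\bC^r)$ has closure $\ol{\fJ_E L}\otimes\bC^r$, so that $(L\otimes\bC^r)\ominus\M_r(\fJ_E)(L\otimes\bC^r) = N_L\otimes\bC^r$ and
\[
 \| P_{N_L\otimes\bC^r}\, M_F^*|_{N_L\otimes\bC^r} \| = \| (P_{N_L}\otimes I_r)\, M_F\, (P_{N_L}\otimes I_r) \| .
\]
Feeding this into Theorem~\ref{T:dist_ideal} for the pair $(\M_r(\fA), \M_r(\fJ_E))$ gives
\begin{align*}
 \dist(F, \M_r(\fJ_E))
 &= \sup_{L\otimes\bC^r \in \CycLat(\M_r(\fA))} \| (P_{N_L}\otimes I_r) M_F (P_{N_L}\otimes I_r) \| \\
 &\le \sup_{L\in\Lat\fA} \| (P_{N_L}\otimes I_r) M_F (P_{N_L}\otimes I_r) \| ,
\end{align*}
while the reverse inequality is precisely the estimate recorded just before the definition of an $r\times r$ Nevan\-linna-Pick family, namely Lemma~\ref{L:dist1} for that same pair. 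Equality is the first claim.

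For the general statement, I would identify $\M_r(\fA^{(s)})$ with the $s$-fold ampliation of $\M_r(\fA)$ by means of the tensor swap $\bC^r\otimes\bC^s \cong \bC^s\otimes\bC^r$, under which the ideal $\M_r(\fJ_E^{(s)})$ — with $\fJ_E^{(s)}$ the image of $\fJ_E$ under the ampliation $\fA \to \fA^{(s)}$ — corresponds to the $s$-fold ampliation of $\M_r(\fJ_E)$. Since the ampliation map $\M_r(\fA) \to \M_r(\fA^{(s)})$ is isometric and carries $\M_r(\fJ_E)$ onto $\M_r(\fJ_E^{(s)})$, we get $\dist(F, \M_r(\fJ_E)) = \dist(F^{(s)}, \M_r(\fJ_E^{(s)}))$ for each $F\in\M_r(\fA)$. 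Now I would rerun the previous paragraph with $\fA^{(s)}$ in place of $\fA$ — legitimate since $\M_r(\fA^{(s)})$ has $\bA_1(1)$ — to pinch $\dist(F^{(s)}, \M_r(\fJ_E^{(s)}))$ between $\sup_{\tilde L\in\Lat(\fA^{(s)})}$ of the compressions $\|(P_{\tilde N}\otimes I_r) M_{F^{(s)}}(P_{\tilde N}\otimes I_r)\|$, with $\tilde N = \tilde L\ominus\fJ_E^{(s)}\tilde L$: from above by Theorem~\ref{T:dist_ideal} and from below by Lemma~\ref{L:dist1}. This says exactly that $\Lat(\fA^{(s)})$ is an $r\times r$ Nevan\-linna-Pick family for $\fA$.

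For the last clause I would take $s=\infty$ and invoke the remark preceding the theorem: the infinite ampliation of any operator algebra has $\bA_1(1)$ (indeed $X_{0,1}$), since a trace-class operator implementing a weak-$*$ functional reduces, via its polar decomposition, to a single vector functional on the infinite ampliation with the same norm. Under the identification above, $\M_r(\fA^{(\infty)})$ is the infinite ampliation of $\M_r(\fA)$, hence has $\bA_1(1)$ for every $r\ge1$; applying the general statement once for each $r$ then shows $\Lat(\fA^{(\infty)})$ is a complete Nevan\-linna-Pick family for $\fA$. I do not anticipate a genuine obstacle: the only points needing care are that $\M_r(\fJ_E)$ is the right ideal, that the tensor swap intertwines $\M_r(\fA^{(s)})$ with the ampliation of $\M_r(\fA)$ and transports the ideal and the lattice of invariant subspaces correctly, and the precise description of the cyclic invariant subspaces of a matrix amplification.
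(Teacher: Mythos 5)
Your proposal is correct and takes essentially the same route as the paper: apply Theorem~\ref{T:dist_ideal} to the dual algebra $\M_r(\fA)$ (or $\M_r(\fA^{(s)})$) and its ideal $\M_r(\fJ_E)$, combine with the lower bound from Lemma~\ref{L:dist1}, and use the identification $\Lat(\M_r(\fA)) = \{L\otimes\bC^r : L\in\Lat\fA\}$ together with the tensor-swap identification of $\M_r(\fA^{(s)})$ with the $s$-fold ampliation of $\M_r(\fA)$. The paper only records ``the proof is the same'' as that of Theorem~\ref{T:A1NP}; you have spelled out the same argument, including the bookkeeping it implicitly invokes.
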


While it appears that ampliations of matrix algebras over some well known 
multiplier algebras have $\bA_1(1)$, we are unaware of any general results
of this kind.  Such a result would be interesting.

We will illustrate Theorem~\ref{T:matrixNP} with some examples.

\subsection*{Bergman Space} 
The Bergman shift $B$ on $A^2(\bD)$ has property $X_{0,1}$ \cite{CP02}.
This is inherited by any dual subalgebra $\fA$ of $\fM(A^2(\bD))$.
Therefore $\M_r(\fA)$ has property $\bA_1(1)$ for all $r\ge1$.
We obtain a formulation of the complete Nevan\-linna-Pick
interpolation for subalgebras of $\Hinf$ in this context.

\begin{thm} \label{T:BergmanCNP}
Let $\fA$ be a dual subalgebra of $\fM(A^2(\bD)) \simeq \Hinf$.
Let $E = \{z_1,\dots,z_n \}$ be points in $\bD$ which are separated by $\fA$,
and let $W_1,\dots,W_n$ be $r\times r$ matrices.
There is an element $F \in \M_r(\fA)$ with $F(z_i)=W_i$
and $\|F\| \le 1$ if and only if the following holds:
for each $L\in \Lat \fA$, (setting 
$N_{z_i} = L \ominus \fJ_{z_i} L$ for $1 \le i \le n$), we have
\[
  \Big[ 
  (I_r- W_iW_j^*) \otimes P_{N_{z_i}} P_{N_{z_j}} 
  \Big]_{n\times n} \ge 0
  \qforal L \in \Lat \fA.
\]
\end{thm}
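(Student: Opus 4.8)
The plan is to obtain this theorem as a corollary of Theorem~\ref{T:matrixNP}, by first supplying the hypothesis $\bA_1(1)$ for $\M_r(\fA)$ and then translating the distance formula that results into the stated positivity condition. The Bergman shift $B$ on $A^2(\bD)$ has property $X_{0,1}$ \cite{CP02}, and this property is inherited by every dual subalgebra $\fA$ of $\fM(A^2(\bD))$; since $X_{0,1}$ forces $\M_r(\fA)$ to have property $\bA_1(1)$ for all $r\ge1$, Theorem~\ref{T:matrixNP} applies, so $\Lat\fA$ is an $r\times r$ Nevan\-linna-Pick family for $\fA$. Concretely, with $N_L = L\ominus\ol{\fJ_E L}$ for $L\in\Lat\fA$ --- so that $N_L\otimes\bC^r = L^{(r)}\ominus\ol{\M_r(\fJ_E)L^{(r)}}$ is the semi-invariant subspace of $L^{(r)}$ attached to the ideal $\M_r(\fJ_E)$ --- this says that for every $F\in\M_r(\fA)$,
\[
 \dist\big(F,\M_r(\fJ_E)\big) \;=\; \sup_{L\in\Lat\fA}\big\| (P_{N_L}\otimes I_r)\,M_F\,(P_{N_L}\otimes I_r)\big\| .
\]

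Next I would reduce the interpolation question to this distance. Since $\fA$ separates $E$, there are idempotents $p_i\in\fA$ with $p_i(z_j)=\delta_{ij}$; put $G=\sum_{i=1}^n p_i\otimes W_i\in\M_r(\fA)$, so that $G(z_i)=W_i$. An element $F\in\M_r(\fA)$ satisfies $F(z_i)=W_i$ for all $i$ exactly when every entry of $F-G$ lies in $\fJ_E$, i.e.\ $F-G\in\M_r(\fJ_E)$. Because $\M_r(\fJ_E)$ is weak-$*$ closed, $\dist(G,\M_r(\fJ_E))$ is attained; hence there is an interpolant of the data in the unit ball of $\M_r(\fA)$ if and only if $\dist(G,\M_r(\fJ_E))\le1$, which by the displayed formula is equivalent to $\|(P_{N_L}\otimes I_r)M_G(P_{N_L}\otimes I_r)\|\le 1$ for every $L\in\Lat\fA$.

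It remains to recognise this norm condition, for a fixed $L$, as positivity of the operator matrix. Passing to adjoints, the norm equals $\|R_L\|$, where $R_L:=(P_{N_L}\otimes I_r)\,M_G^*\big|_{N_L\otimes\bC^r}$. By Remark~\ref{R:multiplicity} the eigenspaces $N_{z_i}=L\ominus\ol{\fJ_{z_i}L}$ are linearly independent, together span $N_L$, and satisfy $P_{N_{z_i}}M_f^*|_{N_{z_i}}=\ol{f(z_i)}P_{N_{z_i}}$ for $f\in\fA$; moreover $P_LM_f^*$ carries $N_{z_i}$ into itself (since $f\fJ_{z_i}\subseteq\fJ_{z_i}$), so in fact $P_{N_L}M_f^*\zeta=\ol{f(z_i)}\zeta$ for $\zeta\in N_{z_i}$. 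Applying this entrywise to $M_G^*$ gives $R_L(\zeta\otimes u)=\zeta\otimes W_i^*u$ for $\zeta\in N_{z_i}$, $u\in\bC^r$. Writing a generic $\xi\in N_L\otimes\bC^r$ as $\xi=\sum_{i=1}^n\xi_i$ with $\xi_i\in N_{z_i}\otimes\bC^r$ and expanding, one obtains
\[
 \|\xi\|^2-\|R_L\xi\|^2 \;=\; \sum_{i,j=1}^n\big\langle \big(I_r-W_iW_j^*\big)\xi_j,\ \xi_i\big\rangle ,
\]
where $I_r-W_iW_j^*$ acts on the $\bC^r$-factor. Substituting $\xi_i=(P_{N_{z_i}}\otimes I_r)\eta_i$ and letting $\eta_i$ range over $H^{(r)}$, the right-hand side is precisely the quadratic form of $\big[(I_r-W_iW_j^*)\otimes P_{N_{z_i}}P_{N_{z_j}}\big]_{n\times n}$; hence $\|R_L\|\le1$ if and only if this matrix is positive. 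Combining the three steps proves the theorem.

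The step I expect to be the main obstacle is the last one, because of the multiplicity of the kernels: a subspace $L\in\Lat\fA$ need not be cyclic, so the eigenspaces $N_{z_i}$ may be infinite-dimensional and mutually non-orthogonal, and one cannot reduce to single kernel vectors $k^L_{z_i}$. One must instead work with the projections $P_{N_{z_i}}$, carefully justify that $(P_{N_L}\otimes I_r)M_G^*$ acts as claimed on each $N_{z_i}\otimes\bC^r$ (using $\fJ_E=\prod_i\fJ_{z_i}$, the decomposition $N_L=\bigvee_iN_{z_i}$, and the eigenspace relation), and verify that the quadratic-form identity above genuinely encodes positivity of the $n\times n$ block matrix. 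The remaining ingredients --- inheritance of $X_{0,1}$, attainment of the distance over the weak-$*$ closed ideal $\M_r(\fJ_E)$, and the passage to $\dist(G,\M_r(\fJ_E))\le1$ --- are routine.
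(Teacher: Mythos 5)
Your proof is correct and follows essentially the same route as the paper: establish $\bA_1(1)$ for $\M_r(\fA)$ via the $X_{0,1}$ property of the Bergman shift \cite{CP02}, invoke the distance formula (the paper quotes Theorem~\ref{T:dist_ideal} directly where you cite Theorem~\ref{T:matrixNP}, but these are the same computation), and then unwind $\|(P_{N_L}\otimes I_r)A(P_{N_L}\otimes I_r)\|\le 1$ into positivity of the block matrix using the decomposition of $N_L$ into the eigenspaces $N_{z_i}$ from Remark~\ref{R:multiplicity}. The only presentational difference is that the paper phrases the final step as positivity of $(P_{N_L}\otimes I_r)(I-AA^*)(P_{N_L}\otimes I_r)$, whereas you write out the equivalent quadratic-form identity $\|\xi\|^2-\|R_L\xi\|^2$ directly; both rest on the same observations about linear independence, span, and the action of $P_L A^*$ on each $N_{z_i}\otimes\bC^r$.
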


\begin{proof}
Let $A$ be any element of $\M_r(\fA)$ such that $A(z_i) = W_i$.
Since $\M_r(\fA)$ has $\bA_1(1)$, Theorem~\ref{T:dist_ideal} implies that
\[
 \dist(A, \M_r(\fJ_E)) = 
 \sup_{L \in \Lat(\fA)} \| (P_{N_L} \otimes I_r) A (P_{N_L} \otimes I_r) \|.
\]
As before, a necessary and sufficient condition for interpolation with an element
$F\in\M_r(\fA)$ of norm at most one is that $\dist(A, \M_r(\fJ_E)) \le 1$.
Also arguing in a standard manner, using the semi-invariance of $N_L$,
\begin{align*}
 \| (P_{N_L} \otimes I_r) A (P_{N_L} \otimes I_r) \|^2 =
 \| (P_{N_L} \otimes I_r) A A^* (P_{N_L} \otimes I_r) \| .
\end{align*}
This has norm at most 1 if and only if
\[ (P_{N_L} \otimes I_r) ( I - A A^*) (P_{N_L} \otimes I_r) \ge 0 .\]

As we observed in Remark~\ref{R:multiplicity}, $N_L$ is spanned by the
spaces $N_{z_i}$ for $1 \le i \le n$.  
These subspaces are eigenspaces for $(P_L \fA|_L)^*$, and thus 
they are independent, and at a positive angle to each other.
So positivity of the operator above is equivalent to the positivity of
\[
 \Big[ 
  (P_{N_{z_i}} \otimes I_r) ( I -A A^*) (P_{N_{z_j}} \otimes I_r)
 \Big]= 
 \Big[ 
  (I_r- W_iW_j^*) \otimes P_{N_{\lambda_i}} P_{N_{\lambda_j}} 
 \Big]
\]
because the restriction of $(P_L\otimes I_r)A^*$ to $N_{z_j}\otimes \bC^r$ is
just $P_{N_{z_j}} \otimes W_j^*$.
\end{proof}

\subsection*{Hardy Space}
We return to the case of subalgebras of $\Hinf$ acting on Hardy space. 
In \cite{DPRS09}, for $\fA = \Hinf_1 := \{ f \in \Hinf : f'(0) = 0 \}$,
it was shown that the distance formula for matrix interpolation \emph{fails} for $\fA$.
In our terminology, $\Lat \Hinf_1$ is not a complete Nevan\-linna-Pick family.

So we cannot drop the assumption that $\M_r(\fA)$ has $\bA_1(1)$.  
Indeed, the unilateral shift fails to have even property $\bA_2$ (Theorem 3.7 \cite{BFP83}).
We will show that with ampliations, a general result can be obtained.
The following result should be well known, but we do not have a reference.  A version of it appears as Theorem 4 in \cite{Sar67}.

\begin{lem} \label{L:A1MnHinf}
$\M_r(\Hinf)$ acting on $(H^2 \otimes \bC^r)^{(r)}$ as 
$r\times r$ matrices over $\fM(H^2)$
ampliated $r$ times has property $\bA_1(1)$.
\end{lem}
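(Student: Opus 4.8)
The plan is to identify $\M_r(\Hinf)$ acting on $(H^2 \otimes \bC^r)^{(r)}$ with the multiplier algebra $\fM(K)$ of a suitable reproducing kernel Hilbert space $K$ (namely $K = H^2 \otimes \bC^r \otimes \bC^r$, viewed as vector-valued Hardy space $H^2_{\bC^{r^2}}$ on $\bD$ via a different grouping of tensor factors), and then to exploit the factorization theory for vector-valued $H^\infty$. The key structural fact is that $\M_r(\Hinf)$ ampliated $r$ times is unitarily equivalent to the algebra of analytic Toeplitz operators with symbols in $H^\infty_{\M_r}$ acting on $H^2_{\bC^{r^2}}$, where the $\M_r$-valued symbol $\Phi$ acts on $\bC^{r^2} \cong \bC^r \otimes \bC^r$ as $\Phi \otimes I_r$. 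First I would set up this identification carefully so that the weak-$*$ topology and predual match up correctly.

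The heart of the argument is a predual factorization statement for vector-valued Hardy space. A weak-$*$ continuous functional $\phi$ on $\M_r(\Hinf)^{(r)}$ corresponds, via the identification above, to an element of $L^1_{\M_r}(\bT) / H^1_{0,\M_r}(\bT)$ of norm $< 1$ (or $=\|\phi\|$), and by the classical theory of $\M_r$-valued $H^p$ spaces — specifically the fact that an element of $L^1_{\M_r}$ of norm $\le 1$ factors as a product $G^* F$ with $F, G \in H^2_{\M_r}$ of norm $\le 1$, essentially the matrix-valued Riesz factorization (this is the content behind Sarason's Theorem 4 in \cite{Sar67}) — one obtains $\phi$ as a rank-one vector functional on $(H^2 \otimes \bC^r)^{(r)}$ with the norm estimate $\|x\|\,\|y\| < 1$. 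Concretely, the $r$ columns of $F$ assemble into the vector $x \in (H^2_{\bC^r})^{(r)}$ and the $r$ columns of $G$ into $y$, and the trace pairing $\operatorname{Tr}\!\big(\int_\bT \Phi G^* F\big)$ becomes $\ip{\Phi^{(r)} x, y}$; the key point that makes the $r$-fold ampliation exactly right is that a general $L^1_{\M_r}$ functional has "rank $r$" in the appropriate sense, which the ampliation by $r$ collapses to rank one.

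I expect the main obstacle to be bookkeeping the several tensor factors and the various identifications without error — distinguishing the $\bC^r$ that $\M_r$ acts on, the $\bC^r$ of the ampliation, and making sure the matrix-valued factorization produces vectors in the correct copy of $H^2$ with the correct norm. The analytic content (matrix-valued Riesz–Herglotz / outer factorization) is classical, so the write-up should cite it and focus on the translation; I would do this by first treating $\|\phi\| < 1$ so the factorization can be taken strict, then verifying $x$ is automatically cyclic (analogous to the outer-function observation in the Hardy space discussion above) if one wishes to also record that refinement, though the lemma as stated only requires $\bA_1(1)$ itself.
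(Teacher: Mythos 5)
Your route is genuinely different from the paper's, but the step you call ``classical matrix-valued Riesz factorization'' is, after the translation you set up, precisely the assertion to be proved, and as you literally state it, it is false. Exact factorization of an $L^1_{\M_r}$ representative $h$ as $G^*F$ with $F,G\in H^2_{\M_r}$ already fails for $r=1$: if $h=f\ol{g}$ with nonzero $f,g\in H^2$, then $\log|h|=\log|f|+\log|g|\in L^1$, so for example $h=\chi_E$ with $0<m(E)<1$ admits no such factorization even though $\|h\|_1<1$. What is actually needed is a factorization \emph{modulo} $H^1_{0,\M_r}$ with the norm control $\|F\|_2\,\|G\|_2<1$, and this is not the F.~Riesz theorem (which factors $H^1$, not the predual $L^1/H^1_0$); it is exactly the $\bA_1(1)$ property being claimed. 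Likewise, the ``rank $r$'' heuristic you offer for why $r$ ampliations should suffice is not a dimension count one can run on the $L^1$ side.

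Completing your $L^1$ route would require the pointwise polar decomposition $h=ab^*$ in $L^2\otimes\M_r$, then the Helson--Lowdenslager/Lax theory of invariant subspaces of the \emph{bilateral} vector-valued shift to move $\ol{\Hinf[\text{rows of }a]}\subset L^2\otimes\bC^r$ into some $H^2\otimes\bC^s$, $s\le r$; and, crucially, an argument that one may perturb the representative of $\phi$ inside $H^1_{0,\M_r}$ so that this subspace has no doubly-invariant (reducing) summand. None of that is in your sketch, and it is strictly harder than what the paper does. The paper stays entirely in the unilateral setting: the infinite ampliation has $\bA_1(1)$ for elementary trace-class reasons, and Beurling--Lax--Halmos for the \emph{unilateral} shift, where reducing summands cannot occur, bounds the multiplicity of $\Hinf[x_1,\dots,x_r]\subset H^{2(\infty)}$ by $r$ via a one-line wandering-subspace argument --- which is where the ``$r$'' in the $r$-fold ampliation actually comes from. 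Your plan might be made to work, but the cited ``classical'' factorization is the whole content of the lemma and is left unproved.
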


\begin{proof}
Form the infinite ampliation $\M_r(\fM(H^2)^{(\infty)})$.
Then any weak-$*$ continuous functional $\phi$ on $\M_r(\Hinf)$
with $\|\phi\| < 1$ can be represented as a rank one functional $[xy^*]$ 
on $(H^2 \otimes \bC^r)^{(\infty)} \simeq  H^{2(\infty)} \otimes \bC^r$
with $\|x\|\,\|y\| < 1$.  Write $x = (x_1,\dots,x_r)$
and $y = (y_1,\dots,y_r)$ with $x_i$ and $y_i$ in $H^{2(\infty)}$
so that if $F = \big[ f_{ij} \big] \in \M_r(\Hinf)$, then
\[ \phi(F) = \sum_{i,j=1}^r \ip{ M_{f_{ij}} x_j, y_i} .\]

Let $M = \Hinf[x_1,\dots,x_r]$.
By the Beurling-Lax-Halmos theory for shifts of infinite multiplicity \cite{Halmos61},
$M_z^{(\infty)}|_M$ is unitarily equivalent to $M_z^{(s)}$ for some $s\le r$.
Thus we may assume that $x_i$ and $y_j$ live in $H^{2(r)}$.
So this means that $x$ and $y$ are then identified with vectors in
$(H^2 \otimes \bC^r)^{(r)}$ as desired.
\end{proof}

We immediately obtain:
 
\begin{thm} \label{T:subHinfinity}
Suppose $\fA$ is a dual subalgebra of $\Hinf$ acting on $H^2$.  
Then $\Lat(\fA^{(r)})$ is an $r \times r$ Nevan\-linna-Pick family for $\fA$.
\end{thm}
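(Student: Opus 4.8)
The plan is to deduce Theorem~\ref{T:subHinfinity} immediately from Lemma~\ref{L:A1MnHinf} together with the general machinery already developed in Section~\ref{S:matrix}, in particular Theorem~\ref{T:matrixNP}. First I would observe that Theorem~\ref{T:matrixNP} tells us precisely that if some ampliation $\M_r(\fA^{(s)})$ has property $\bA_1(1)$, then $\Lat(\fA^{(s)})$ is an $r \times r$ Nevan\-linna-Pick family for $\fA$. So the task reduces to producing, for the given $r$, an ampliation of $\M_r(\fA)$ that has $\bA_1(1)$.

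The key step is to take $s = r$ and show that $\M_r(\fA^{(r)})$ has property $\bA_1(1)$. This follows from Lemma~\ref{L:A1MnHinf}: that lemma establishes $\bA_1(1)$ for $\M_r(\Hinf)$ acting on $(H^2 \otimes \bC^r)^{(r)}$, i.e.\ for $\M_r(\fM(H^2)^{(r)})$. Now $\fA$ is a \wot-closed (equivalently, weak-$*$-closed, since $\fM(H^2)=\Hinf$ has $\bA_1(1)$ so its weak-$*$ and \wot\ topologies coincide on subalgebras) unital subalgebra of $\Hinf$, and therefore $\M_r(\fA^{(r)})$ is a weak-$*$-closed unital subalgebra of $\M_r(\fM(H^2)^{(r)})$. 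Property $\bA_1(1)$ passes to weak-$*$-closed subspaces (indeed subalgebras): if $\phi$ is a weak-$*$ continuous functional on $\M_r(\fA^{(r)})$ with $\|\phi\|<1$, extend it by Hahn-Banach through the predual to a weak-$*$ continuous functional on $\M_r(\fM(H^2)^{(r)})$ of norm still less than $1$ (this uses that the predual of the subalgebra is a quotient of the predual of the ambient algebra, an isometric quotient), factor it as $[xy^*]$ with $\|x\|\,\|y\|<1$ by Lemma~\ref{L:A1MnHinf}, and then restrict back to $\M_r(\fA^{(r)})$. The same vectors $x,y$ work.

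With $\M_r(\fA^{(r)})$ shown to have $\bA_1(1)$, the second assertion of Theorem~\ref{T:matrixNP} applies with $s=r$ and yields that $\Lat(\fA^{(r)})$ is an $r\times r$ Nevan\-linna-Pick family for $\fA$, which is exactly the statement to be proved.

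The main obstacle, such as it is, is purely bookkeeping: one must be careful about which Hilbert space the various matrix algebras act on and that $\M_r(\fA^{(r)})$ is genuinely a subalgebra of the algebra $\M_r(\Hinf)$-ampliated-$r$-times appearing in Lemma~\ref{L:A1MnHinf} (rather than a mere compression), and that the functional-extension step is isometric up to $\ep$ so that the norm bound $\|x\|\,\|y\|<1$ survives. There is no genuine analytic difficulty beyond Lemma~\ref{L:A1MnHinf} itself, whose proof already invokes the Beurling-Lax-Halmos theory; everything here is formal manipulation of dual algebras and their preduals.
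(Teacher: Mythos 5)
Your proposal is correct and follows exactly the route the paper intends (the paper presents Theorem~\ref{T:subHinfinity} as an immediate consequence of Lemma~\ref{L:A1MnHinf} together with Theorem~\ref{T:matrixNP}). You have simply made explicit the step that the paper leaves tacit, namely that $\bA_1(1)$ passes to weak-$*$-closed subalgebras via a norm-controlled Hahn--Banach extension through the predual, so that $\M_r(\fA^{(r)}) \subset \M_r(\fM(H^2)^{(r)})$ inherits $\bA_1(1)$ and the second part of Theorem~\ref{T:matrixNP} applies with $s=r$.
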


An additional application of the Beurling-Lax-Halmos result shows 
that Theorem~\ref{T:subHinfinity} reduces to the matrix-valued Nevan\-linna-Pick theorem when $\fA = \Hinf$.

 In the case of $\Hinf_1$, this yields the result of Ball, Bolotnikov and Ter Horst
\cite{BBtH}.  They express their models as invariant subspaces of $\M_r(H^2)$
(in the Hilbert Schmidt norm) 
instead of $H^{2(r)} \otimes \bC^r$, but this is evidently the same space.
It suffices to use subspaces which are cyclic for $\Hinf$.
In much the same manner as \cite{DPRS09}, they obtain an explicit
parameterization of these subspaces.

 


\begin{thebibliography}{99}

\bibitem{Abram79} 
M.B. Abrahamse, 
\emph{The Pick interpolation theorem for finitely connected domains}, 
Michigan Math.\ J. \textbf{26} (1979), 195--203.

\bibitem{AMc00} 
J. Agler and J.E. McCarthy, 
\emph{Complete Nevan\-linna-Pick kernels}, 
J. Funct.\ Anal. \textbf{175} (2000), 111--124.

\bibitem{AMc02} 
J. Agler and J.E. McCarthy, 
\emph{Pick interpolation and Hilbert function spaces}, 
Graduate Studies in Mathematics  \textbf{44}, 
Amer.\ Math.\ Soc., Providence RI, 2002.

\bibitem{ABFP} C. Apostol, H. Bercovici, C. Foia\c s, and C. Pearcy, 
\textit{Invariant subspaces, dilation theory, and the structure 
of the predual of a dual algebra. I}, 
J. Funct.\ Anal. \textbf{63} (1985), 369--404. 

\bibitem{AP95}
A. Arias and G. Popescu, 
\textit{Factorization and reflexivity on Fock spaces},
Integral Equations Operator Theory \textbf{23} (1995),268--286.

\bibitem{AP00}
A. Arias and G. Popescu, 
\textit{Noncommutative interpolation and Poisson transforms},
Israel J. Math. \textbf{115} (2000), 205--234.

\bibitem{BBtH}
J. Ball, V. Bolotnikov, and S. ter Horst, 
\textit{A constrained Nevan\-linna-Pick interpolation problem for matrix-valued functions}, 
Indiana Univ.\ Math.\ J., to appear. arXiv:0809.2345v1

\bibitem{BFP83} 
H. Bercovici, C. Foias and C. Pearcy, 
\emph{Dilation theory and systems of simultaneous equations in the predual of an operator algebra}, 
Michigan Math.\ J. \textbf{30} (1983), 335-354.

\bibitem{BFP85} 
H. Bercovici, C. Foias and C. Pearcy, 
\emph{Dual algebras with applications to invariant subspaces and dilation theory}, 
CBMS Notes \textbf{56}, 
Amer.\ Math.\ Soc., Providence RI, 1985.

\bibitem{Berco87} 
H. Bercovici, 
\emph{A note on disjoint invariant subspaces}, 
Michigan Math.\ J. \textbf{34} (1987), 435--439.

\bibitem{Berco88} 
H. Bercovici, 
\emph{Factorization theorems and the structure of operators on Hilbert space}, 
Ann.\ of Math.\ (2) \textbf{128} (1988), 399--413.

\bibitem{Berco98}
H. Bercovici, 
\emph{Hyper-reflexivity and the factorization of linear functionals}, 
J. Funct.\ Anal. \textbf{158} (1998), 242--252.

\bibitem{CP02} 
Conway, J.B. and Ptak, M., 
\emph{The harmonic functional calculus and hyperreflexivity}, 
Pacific J. Math. \textbf{204} No. 1 (2002).

\bibitem{DPRS09} 
K.R. Davidson, V. Paulsen, M. Raghupathi and D. Singh, 
\emph{A constrained Nevan\-linna-Pick interpolation problem}, 
Indiana Univ.\ Math.\ J., \textbf{58} (2009), 709--732.

\bibitem{DP99} 
K.R. Davidson and D.R. Pitts, 
\emph{Invariant subspaces and hyper-reflexivity for free semigroup algebras}, 
Proc.\ London Math\ Soc \textbf{78} (1999), 401--430.

\bibitem{DP98a} 
K.R. Davidson and D.R. Pitts, 
\emph{The Algebraic structure of non-commutative analytic Toeplitz algebras}, 
Math. Ann. \textbf{311} (1998), 275--303.

\bibitem{DP98b}
K.R. Davidson and D.R. Pitts, 
\emph{Nevan\-linna-Pick interpolation for non-commutative analytic Toeplitz algebras}, Integral Eqtns.\ and Operator Theory \textbf{31} (1998), 321--337.

\bibitem{FV01} 
S.I. Federov, V.L. Vinnikov, 
\emph{On the Nevan\-linna-Pick interpolation in multiply connected domains}, 
translation in J. Math.\ Sci.\ (New York), \textbf{105} (2001), 2109--2126.

\bibitem{HN82}
D.W. Hadwin and E.A. Nordgren,
\textit{Subalgebras of reflexive algebras},
J. Operator Theory \textbf{7} (1982), 3--23.

\bibitem{Halmos61}
P. Halmos, 
\textit{Shifts on Hilbert spaces},
J. Reine Angew.\ Math. \textbf{208} (1961), 102--112.

\bibitem{Kenn1} 
M. Kennedy,
\textit{Wandering vectors and the reflexivity of free semigroup algebras},
J. Reine Angew.\ Math., to appear (arXiv:0909.3479). 

\bibitem{Kenn2} 
M. Kennedy,
\textit{Absolutely continuous representations of the non-commutative disk algebra},
preprint arXiv:1001.3182 

\bibitem{KL86}
J. Kraus and D. Larson,
\textit{Reflexivity and distance formulae},
Proc.\ London Math.\ Soc. \textbf{53} (1986), 340--356.

\bibitem{McCull92}
S. McCullough, 
\textit{CarathŽodory interpolation kernels}, 
Integral Equations Operator Theory \textbf{15} (1992), 43--71. 

\bibitem{McCull94}
S. McCullough, 
\textit{The local de Branges-Rovnyak construction and complete Nevan\-linna-Pick kernels}, 
in  ``Algebraic Methods in Operator Theory'', pp. 1524, Birkhauser, Basel, 1994. 

\bibitem{McP02} 
S. McCullough and V. Paulsen, 
\emph{$C^*$-envelopes and interpolation theory}, 
Indiana Univ.\ Math.\ J., \textbf{51} (2002), no. 2, 479--505.

\bibitem{Pau01} 
V. Paulsen, 
\emph{Operator Algebras of Idempotents}, 
J. Funct.\ Anal.\ \textbf{181} (2001), 209--226.

\bibitem{Pau-Notes} 
V. Paulsen, 
\emph{An introduction to the theory of reproducing kernel Hilbert spaces}, 
course notes, available at http://www.math.uh.edu/vern.

\bibitem{Quig93}
P. Quiggin, 
\textit{For which reproducing kernel Hilbert spaces is Pick's theorem true?},
Integral Equations Operator Theory \textbf{16} (1993), 244--266.

\bibitem{Quig94}
P. Quiggin, 
\textit{Generalisations of Pick's Theorem to Reproducing Kernel Hilbert Spaces},
Ph.D. thesis, Lancaster University, 1994. 

\bibitem{Rag09a} 
M. Raghupathi, 
\emph{Nevan\-linna-Pick interpolation for $\bC+B\Hinf$}, 
Integral Eqtns.\ and Operator Theory  \textbf{63} (1) (2009), 103--125.

\bibitem{Rag09b}
M. Raghupathi, 
\emph{Abrahamse's interpolation theorem and Fuchsian groups}, 
J. Math.\ Anal.\ Appl. \textbf{355} (1) (2009), 258--276.

\bibitem{Sar65} D. Sarason, 
\emph{The $H^p$ spaces of an annulus},
Mem.\ Amer.\ Math.\ Soc.\ No. \textbf{56} (1965).

\bibitem{Sar67} D. Sarason, 
\emph{Generalized interpolation in $\Hinf$}, 
Trans.\ Amer.\ Math.\ Soc. \textbf{127} (1967), 179--203.

\bibitem{Tho91} 
J. Thomson, 
\emph{Approximation in the mean by polynomials}, 
Annals of Math. \textbf{133} (1991), 477--507.

\end{thebibliography}
\end{document}